\newcommand{\ignore}[1]{}
\begin{document}
\newcommand{\qed}{\hphantom{.}\hfill $\Box$\medbreak}
\newcommand{\proof}{\noindent{\bf Proof \ }}

\newtheorem{theorem}{Theorem}[section]
\renewcommand{\theequation}{\thesection.\arabic{equation}}
\newtheorem{lemma}[theorem]{Lemma}
\newtheorem{corollary}[theorem]{Corollary}
\newtheorem{example}[theorem]{Example}
\newtheorem{remark}[theorem]{Remark}
\newtheorem{definition}[theorem]{Definition}
\newtheorem{construction}[theorem]{Construction}
\newtheorem{fact}[theorem]{Fact}
\newtheorem{proposition}[theorem]{Proposition}
\newtheorem{conjecture}[theorem]{Conjecture}

\title{ The maximum product of sizes of cross-\(t\)-intersecting families 
%\thanks{ } 
}

\author{\small Jingjun\ Bao and Lijun\ Ji
\thanks{J. Bao is with School of Mathematics and Statistics, Ningbo University, Ningbo 315211, P. R. China;
 L. Ji is with the Department of Mathematics, Soochow University,
 Suzhou 215006, P. R. China (email: baojingjun@hotmail.com; jilijun@suda.edu.cn).}
}

%
%{\bf SHORT RUNNING HEAD:}  Steiner Quadruple Systems
\date{}
\maketitle

\begin{abstract}
Two families of sets \(\mathcal{A}\) and \(\mathcal{B}\) are called \emph{cross-\(t\)-intersecting} if \(|A \cap B| \geq t\) for all \(A \in \mathcal{A}\) and \(B \in \mathcal{B}\). Determining the maximum product of sizes for such cross-\(t\)-intersecting families is an active problem in extremal set theory. In this paper,  we verify the following cross-\(t\)-intersecting version of the Erd\H{o}s–Ko–Rado theorem: For \(k\geq l \geq t \geq 3\) and \(\min\{m,n\} \geq (t+1)(k-t+1)\), the maximun value of \(|\mathcal{A}||\mathcal{B}|\) for two  cross-\(t\)-intersecting families \(\mathcal{A}\subseteq \binom{[n]}{k}\) and \(\mathcal{B} \subseteq \binom{[m]}{l}\) is \( \binom{n-t}{k-t}\binom{m-t}{l-t}\). Moreover, we characterize the extremal families attaining the upper bound. Our result confirms a conjecture of Tokushige for \(t \geq 3\), and actually proves a more general result.

\noindent {\bf Keywords}: Intersecting family; Cross-\(t\)-intersecting families; Erd\H{o}s–Ko–Rado theorem; Generating set
\smallskip
\end{abstract}

\section{Introduction}
Let \(n\), \(k\) and \(t\) be positive integers with \(t \leq k \leq n\). We denote the set \(\{1, \ldots, n\}\) by \([n]\) and use \(2^{[n]}\) to represent its power set. A nonempty subset \(\mathcal{F}\) of \(2^{[n]}\) is called \(k\)-{\em uniform} if its elements all have size \(k\). For any finite set \(D\), we use \(\binom{D}{k}\) to denote the collection of all \(k\)-element subsets of \(D\). We say that \(\mathcal{F}\) is a \(t\)-{\em intersecting family} if \(|A\cap B|\geq t\) for all \(A,B\in\mathcal{F}\). A \(1\)-intersecting family is simply referred to as an intersecting family. The Erd\H{o}s–Ko–Rado Theorem is one of the central results in extremal combinatorics.

\begin{theorem}[Erd\H{o}s–Ko–Rado Theorem \cite{EKR1961}]\label{EKR}
Let \(k\), \(n\) and \(t\) be positive integers such that \(t \leq k \leq n\). If \(\mathcal{F}\subseteq\binom{[n]}{k}\) is \(t\)-intersecting, then there is a constant $n_0(k,t)$ such that for \(n\geq n_{0}(k,t)\), the following inequality holds. \[|\mathcal{F}|\leq\binom{n - t}{k - t}.\]
\end{theorem}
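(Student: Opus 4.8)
The plan is to combine the compression (\emph{shifting}) technique with a sunflower-based structural argument that becomes decisive once $n$ is large. First I would normalize $\mathcal{F}$: for $1\le i<j\le n$ recall the shift $S_{ij}$, which replaces $F\in\mathcal{F}$ by $(F\setminus\{j\})\cup\{i\}$ whenever $j\in F$, $i\notin F$ and this new set is not already present. It is standard that $|S_{ij}(\mathcal{F})|=|\mathcal{F}|$ and that $S_{ij}$ preserves the $t$-intersecting property; since only finitely many shifts are possible before the family stabilizes, it suffices to bound $|\mathcal{F}|$ under the extra assumption that $\mathcal{F}$ is \emph{shifted} (left-compressed).

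For a shifted $t$-intersecting family my goal is to show $\mathcal{F}\subseteq\{F\in\binom{[n]}{k}: T\subseteq F\}$ for a fixed $t$-set $T$, a family of exactly $\binom{n-t}{k-t}$ members. To locate a common core I would invoke the Erd\H{o}s--Rado sunflower lemma: if $|\mathcal{F}|$ exceeds $k!\,k^{k}$ (a bound independent of $n$), then $\mathcal{F}$ contains a sunflower $A_1,\dots,A_{k+1}$ with core $Y$. Because distinct petals meet only in $Y$, the $t$-intersecting hypothesis forces $|Y|\ge t$; and since any $B\in\mathcal{F}$ has only $k$ elements while the $k+1$ sets $A_i\setminus Y$ are disjoint and nonempty, $B$ must be disjoint from some $A_i\setminus Y$, whence $t\le|B\cap A_i|\le|B\cap Y|$. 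Thus \emph{every} member of $\mathcal{F}$ meets $Y$ in at least $t$ points.

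It remains to shrink $Y$ down to a $t$-set and to convert the condition $|B\cap Y|\ge t$ into the exact count, and this is where the largeness of $n$ enters and where I expect the real difficulty to lie. If $|Y|=y>t$, the collection of $B$ with $|B\cap Y|\ge t$ has size $\sum_{s\ge t}\binom{y}{s}\binom{n-y}{k-s}$, whose leading term $\binom{y}{t}\binom{n-y}{k-t}$ is merely a constant multiple of $\binom{n}{k-t}$, so a crude count does not yet beat $\binom{n-t}{k-t}$. The crux is therefore a stability analysis: using shiftedness I would argue that, relative to a single optimal $t$-core $T$, the members failing to contain $T$ are confined to a subfamily of order $O\!\big(\binom{n}{k-t-1}\big)$, negligible against $\binom{n-t}{k-t}=\Theta\!\big(\binom{n}{k-t}\big)$. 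Equivalently, one must verify that the $t$-star strictly dominates every competitor, in particular the near-extremal Frankl families $\{F:|F\cap[t+2i]|\ge t+i\}$; the crossover between the star and the first such competitor is exactly what determines the threshold $n_0(k,t)$ (and, tellingly, produces the value $(t+1)(k-t+1)$ that also governs the main theorem). For $n$ beyond this crossover the bound $\binom{n-t}{k-t}$ follows, with equality forcing $\mathcal{F}$ to be a $t$-star.
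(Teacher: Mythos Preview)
The paper does not prove Theorem~\ref{EKR}; it is quoted from \cite{EKR1961} as classical background, so there is no argument in the paper to compare against.

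On your outline: the shifting reduction and the sunflower step are sound and give a core $Y$, $|Y|\le k$, that every member meets in at least $t$ points. The gap is exactly where you flag it, and it is real. Writing ``using shiftedness I would argue that the non-star members are $O\bigl(\binom{n}{k-t-1}\bigr)$'' is not yet an argument---shiftedness alone cannot single out a $t$-core, as the shifted families $\mathcal{F}(n,k,t,r)$ with $r\ge1$ already witness. What is missing is a concrete second pass of the kernel idea: if no $t$-subset $T\subseteq Y$ is common to all of $\mathcal{F}$, choose for each $T\in\binom{Y}{t}$ a witness $B_T\in\mathcal{F}$ with $T\not\subseteq B_T$ and set $Z=Y\cup\bigcup_T B_T$, whose size is bounded purely in $k,t$. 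Any $F$ with $|F\cap Z|=t$ would then have $F\cap Z=F\cap Y=:T$ and $F\cap B_T\subseteq T$, forcing $|F\cap B_T|\le|T\cap B_T|\le t-1$, a contradiction; hence every member meets $Z$ in at least $t+1$ points and $|\mathcal{F}|=O\bigl(\binom{n}{k-t-1}\bigr)<\binom{n-t}{k-t}$ for large $n$. Without some such device, the sunflower bound is off by the factor $\binom{|Y|}{t}$ you yourself identify, and the proposal does not close it.
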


Let \(k, t\) be fixed positive integers with \(t \leq k\). Denote by \(N_{0}(k,t)\) the smallest possible value of \(n_{0}(k,t)\) in Theorem \ref{EKR}. For the case \(t = 1,\) it was shown in \cite{EKR1961} that \(N_{0}(k,1)=2k\). For \(t\geq 1,\) the value of \(N_{0}(k,t)\) is given by \(N_{0}(k,t)=(t + 1)(k - t+1)\), which was established in \cite{F1978} for \(t\geq15\) and in \cite{W1984} for all \(t\).

Let \(r\) be an integer such that \(0\leq r\leq\frac{n - t}{2}\). Define the following \(k\)-uniform family 
\[\mathcal{F}(n,k,t,r):=\left\{A\in\binom{[n]}{k}:|A\cap[t + 2r]|\geq t + r\right\}.\] 
It is a \(t\)-intersecting family, since for its elements \(A\), \(B\) we have \(|A\cap B|\geq|A\cap[t + 2r]|+|B\cap[t + 2r]|-(t + 2r)\geq t\). Frankl \cite{F1978} conjectured that if \(\mathcal{F}\) is a \(t\)-intersecting subfamily of \(\binom{n}{k}\), then \[|\mathcal{F}|\leq\max_{0\leq r\leq(n - t)/2}|\mathcal{F}(n,k,t,r)|.\]

This conjecture was proved partially by Frankl and Füredi \cite{FF1986}, and then settled completely by Ahlswede and Khachatrian \cite{AK1997}.

\begin{theorem}[Ahlswede and Khachatrian \cite{AK1997}]\label{AK97}
Let \(n, k, t\) be positive integers such that \(t \leq k \leq n\). Let \(\mathcal{F}\) be a \(t\)-intersecting \(k\)-uniform family of subsets of \([n]\).
\begin{itemize}
\item[(i)] If \((k - t + 1)(2+\frac{t - 1}{r+1})<n<(k - t + 1)(2+\frac{t - 1}{r})\) for some nonnegative integer \(r\), then \(|\mathcal{F}|\leq|\mathcal{F}(n,k,t,r)|\) and the equality holds if and only if \(\mathcal{F}=\mathcal{F}(n,k,t,r)\) up to isomorphism.
\item[(ii)] If \((k - t + 1)(2+\frac{t - 1}{r+1})=n\) for some nonnegative integer \(r\), then \(|\mathcal{F}|\leq|\mathcal{F}(n,k,t,r)| = |\mathcal{F}(n,k,t,r + 1)|\) and the equality holds if and only if \(\mathcal{F}=\mathcal{F}(n,k,t,r)\) or \(\mathcal{F}(n,k,t,r + 1)\) up to isomorphism.
\end{itemize}
\end{theorem}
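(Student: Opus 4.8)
The plan is to combine the shifting (compression) technique with Ahlswede and Khachatrian's generating-set analysis, thereby reducing the extremal problem to a scalar optimization over the canonical families $\mathcal{F}(n,k,t,r)$.

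First I would reduce to left-compressed families. For $i<j$ the shift $S_{ij}$ replaces each $F\in\mathcal{F}$ with $j\in F$, $i\notin F$ and $(F\setminus\{j\})\cup\{i\}\notin\mathcal{F}$ by $(F\setminus\{j\})\cup\{i\}$. A routine verification shows that $S_{ij}$ preserves $|\mathcal{F}|$ and the $t$-intersecting property, so iterating the shifts until the family stabilizes lets me assume $\mathcal{F}$ is left-compressed without changing its size; since we seek an upper bound, this is harmless.

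Next I would set up the generating-set framework. To a left-compressed $t$-intersecting family I would associate its generating set $\mathcal{G}$, the collection of minimal cores $G$ such that $A\in\mathcal{F}$ precisely when $A$ contains some $G\in\mathcal{G}$, and I would record how $|\mathcal{F}|$ is determined by $\mathcal{G}$. In this language the family $\mathcal{F}(n,k,t,r)$ is exactly the one generated by $\binom{[t+2r]}{t+r}$. The heart of the argument --- and the step I expect to be hardest --- is the pushing-pulling method: starting from an extremal left-compressed family whose generating set is not of this interval form, one performs local moves that transfer sets between neighbouring levels without decreasing $|\mathcal{F}|$, and shows, by induction on the parameters together with a careful case analysis, that these moves drive $\mathcal{G}$ onto some interval $[t+2r]$. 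This forces the optimum to be attained by a canonical family $\mathcal{F}(n,k,t,r)$, and it is precisely here that the full strength of the Ahlswede–Khachatrian machinery is needed.

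Finally I would carry out the scalar optimization. Writing
\[
|\mathcal{F}(n,k,t,r)|=\sum_{j\ge t+r}\binom{t+2r}{j}\binom{n-t-2r}{k-j},
\]
I would determine the sign of $|\mathcal{F}(n,k,t,r+1)|-|\mathcal{F}(n,k,t,r)|$ as $n$ varies; a direct computation shows it vanishes exactly at $n=(k-t+1)\left(2+\frac{t-1}{r+1}\right)$, is negative above this value and positive below it. This yields the strict optimality of a single $\mathcal{F}(n,k,t,r)$ in the open range of case (i) and the tie between $\mathcal{F}(n,k,t,r)$ and $\mathcal{F}(n,k,t,r+1)$ at the boundary of case (ii). The uniqueness of the extremal configurations then follows by tracking which pushing-pulling moves are strict, so that equality propagates back only to the asserted canonical families.
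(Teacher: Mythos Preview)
The paper does not prove this theorem; it is quoted as a known result of Ahlswede and Khachatrian \cite{AK1997} and used as a black box in the later arguments. Your outline---shift to left-compressed families, pass to the minimal generating set, run the pushing--pulling exchange to force the generating set onto an interval $[t+2r]$, and then compare $|\mathcal{F}(n,k,t,r)|$ with $|\mathcal{F}(n,k,t,r+1)|$ to locate the optimal $r$---is precisely the strategy of the original Ahlswede--Khachatrian paper that the present work cites, so there is nothing to contrast.

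As a plan your sketch is accurate, and you correctly flag that the pushing--pulling analysis is where essentially all the substance lies. What you have written does not actually carry out that step (why the exchange moves never decrease $|\mathcal{F}|$, why they terminate, and why they cannot get stuck at a non-interval generating set), so the proposal is really a pointer back to \cite{AK1997} rather than an independent argument; but since the paper itself treats the result as a citation, that is exactly the appropriate level.
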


Let \(\mathcal{A}\) and \(\mathcal{B}\) be two families of subsets of \([n]\). They are called {\em cross-\(t\)-intersecting} if \(|A\cap B|\geq t\) for all \(A\in\mathcal{A}\) and \(B\in\mathcal{B}\). In the special case where \(t = 1,\) we say that \(\mathcal{A}\) and \(\mathcal{B}\) are {\em cross-intersecting}. The cross-\(t\)-intersecting property can be seen as a natural generalization of the \(t\)-intersecting property, as the two properties coincide when \(\mathcal{A}=\mathcal{B}\). 

Now, consider cross-\(t\)-intersecting families \(\mathcal{A}\subseteq \binom{[n]}{k}\) and \(\mathcal{B}\subseteq \binom{[n]}{l}\). We say that these families are {\em maximum} if there do not exist cross-\(t\)-intersecting families \(\mathcal{A}_{1}\subseteq \binom{[n]}{k}\) and \(\mathcal{B}_{1}\subseteq \binom{[n]}{l}\) such that  \(|\mathcal{A}||\mathcal{B}|<|\mathcal{A}_{1}||\mathcal{B}_{1}|\). Similarly, we say that \(\mathcal{A}\) and \(\mathcal{B}\) are {\em maximal} if there do not exist cross-\(t\)-intersecting families \(\mathcal{A}\subseteq \mathcal{A}_{1} \subset {[n]\choose k}$ and $\mathcal{B}\subseteq \mathcal{B}_{1} \subset {[n]\choose l}\) such that  \(|\mathcal{A}||\mathcal{B}|<|\mathcal{A}_{1}||\mathcal{B}_{1}|\). 

The study of possible maximum sizes of pairs of cross-intersecting families is one of the central problems of extremal set theory. In 1986, Pyber \cite{P1986} employed the method of cyclic permutations to extend the Erd\H{o}s–Ko–Rado Theorem to the cross-intersecting case.

\begin{theorem}[Pyber \cite{P1986}]\label{P86}
Let \(n, k, l\) be positive integers such that \(l \leq  k \leq n\). Suppose that \(\mathcal{A}\subseteq\binom{[n]}{k}\) and \(\mathcal{B}\subseteq\binom{[n]}{l}\) are two cross-intersecting families.
\begin{itemize}
\item[{\rm (1)}] If \(n\geq2k+l-2\), then \(|\mathcal{A}||\mathcal{B}|\leq\binom{n - 1}{k - 1}\binom{n - 1}{l - 1}\).
\item[{\rm (2)}] If \(k=l\) and \(n\geq2k\), then \(|\mathcal{A}||\mathcal{B}|\leq\binom{n - 1}{k - 1}^2\).
\end{itemize}
\end{theorem}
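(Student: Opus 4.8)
The plan is to follow Pyber and run Katona's method of cyclic permutations to isolate the combinatorial core on a single cycle, and then to globalize. I may assume $\mathcal{A},\mathcal{B}$ are nonempty and maximal, since otherwise the product is $0$ or we may enlarge the families. Fix a cyclic ordering of $[n]$, identify the ground set with $\mathbb{Z}_n$, and for $s\in\mathbb{Z}_n$ write $A_s=\{s,s+1,\dots,s+k-1\}$ and $B_s=\{s,\dots,s+l-1\}$ for the $k$- and $l$-arcs starting at $s$. In a given cyclic order let $a$ and $b$ be the numbers of arcs of $\mathcal{A}$ and $\mathcal{B}$ that occur, with starting-position sets $S,T\subseteq\mathbb{Z}_n$. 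A one-line check shows $A_s\cap B_t\ne\emptyset$ precisely when $t-s$ lies in the fixed interval $I=\{-(l-1),\dots,k-1\}$ of length $k+l-1$, so the cross-intersecting hypothesis becomes the difference condition $T-S\subseteq I$.

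The first step is the \emph{circular lemma}: if $n\ge k+l$ then the cross-intersecting arcs in any single cyclic order satisfy $a+b\le k+l$. I would prove this by unfolding $\mathbb{Z}_n$ to $\mathbb{Z}$: since $T-S\subseteq I$ with $|I|=k+l-1<n$, fixing any $s_0\in S$ gives $T\subseteq s_0+I$, an arc strictly shorter than the whole cycle, and symmetrically $S$ lies in an arc of length $k+l-1$; hence $S$ and $T$ embed into an interval of $\mathbb{Z}$ with no wraparound, where the elementary sumset bound $|S|+|T|\le|T-S|+1\le|I|+1=k+l$ applies. Equality forces $S$ and $T$ to be nested intervals of starting positions, and I would record this to control the extremal configurations later.

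The main obstacle is passing from this per-cycle \emph{additive} bound to the required \emph{product} bound, and it is genuinely serious: naive averaging over cyclic orders fails on two counts. First, the inequality $a+b\le k+l$ is available only on cycles in which \emph{both} families contribute an arc; on a cycle with $b=0$ the number $a$ is entirely uncontrolled, so $\sum_C(a+b)$ cannot be bounded termwise. Second, even granting the additive bound everywhere, the per-cycle product $ab$ can be as large as $\lfloor(k+l)/2\rfloor\lceil(k+l)/2\rceil$, which strictly exceeds $kl$ once $k>l$; thus no pointwise estimate of $a\,b$ can yield $|\mathcal{A}||\mathcal{B}|\le\binom{n-1}{k-1}\binom{n-1}{l-1}$ by summation. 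The product bound is therefore a global phenomenon, and this is exactly where the stronger hypothesis $n\ge 2k+l-2$ must be spent rather than merely $n\ge k+l$.

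To resolve this I would replace symmetric averaging by an asymmetric reduction via complements and the Kruskal–Katona theorem. Passing to $\overline{\mathcal{B}}=\{[n]\setminus B:B\in\mathcal{B}\}\subseteq\binom{[n]}{n-l}$, the cross-intersecting condition says exactly that no member of $\mathcal{A}$ is contained in a member of $\overline{\mathcal{B}}$, so $\mathcal{A}$ is disjoint from the $k$-th shadow $\partial^{(k)}(\overline{\mathcal{B}})$ and $|\mathcal{A}|\le\binom{n}{k}-|\partial^{(k)}(\overline{\mathcal{B}})|$. Kruskal–Katona lower-bounds the shadow in terms of $|\mathcal{B}|$ alone: writing $|\mathcal{B}|=\binom{x}{n-l}$ for a real parameter $x\ge n-l$ gives $|\partial^{(k)}(\overline{\mathcal{B}})|\ge\binom{x}{k}$, so the problem collapses to the single-variable optimization
\[ P(x):=\binom{x}{n-l}\left(\binom{n}{k}-\binom{x}{k}\right),\qquad n-l\le x\le n, \]
whose maximum I claim is attained at $x=n-1$, giving $P(n-1)=\binom{n-1}{l-1}\binom{n-1}{k-1}$ and forcing the star as the extremal pair. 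Verifying that $n\ge 2k+l-2$ makes $P$ peak exactly at $x=n-1$ is the technical heart of the argument; the circular lemma then re-enters to nail the equality cases, and part (2) requires pushing this optimization down to the range $n\ge 2k$, with the boundary $n=2k$ (where $k$-arcs have complementary partners) handled by a separate matching argument.
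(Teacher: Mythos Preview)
The paper does not prove this statement; Theorem~\ref{P86} is quoted from \cite{P1986} as background in the introduction, with the sole remark that Pyber ``employed the method of cyclic permutations.'' There is therefore no proof in the paper to compare your proposal against.

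Judged on its own, your write-up is an outline with a genuine gap at the decisive step. The first half---setting up Katona's cycles and proving the additive lemma $a+b\le k+l$ via the difference-set argument---is correct but is then explicitly discarded. The operative argument is the Kruskal--Katona reduction in your final paragraph, and that reduction is valid: cross-intersection does force $\mathcal{A}$ to avoid the $k$-shadow of $\overline{\mathcal{B}}$, and the Lov\'asz form of Kruskal--Katona does collapse the problem to maximizing $P(x)=\binom{x}{n-l}\bigl(\binom{n}{k}-\binom{x}{k}\bigr)$. But the whole theorem has now been packed into the bare assertion that $P$ peaks at $x=n-1$; you label this ``the technical heart'' and then stop. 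Nothing you have written constrains $P$ away from $n-1$, and until that optimization is actually carried out---for instance by comparing $P$ at consecutive integers, or by a logarithmic-derivative computation, and showing exactly where the hypothesis on $n$ enters---there is no proof. Two smaller points: the remark that ``the circular lemma re-enters to nail the equality cases'' is unmotivated, since the Kruskal--Katona route supplies its own equality analysis through colex-initial segments and needs no cycle argument; and your opening claim to ``follow Pyber'' is misleading, because Pyber's 1986 proof stays inside the cyclic-permutation framework throughout and does extract the product bound there---what fails is only the crude per-cycle estimate $ab\le kl$ you test, not the method.
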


For \(k> l,\) the lower bound on \(n\) in (1) of Theorem \ref{P86} turns out not to be sharp. In 1989, Matsumoto and Tokushige \cite{MT1989} derived a sharper result.

\begin{theorem}[Matsumoto and Tokushige \cite{MT1989}]
Let \(n, k, l\) be positive integers such that \(n\geq 2k \geq 2l\). If \(\mathcal{A}\subseteq\binom{[n]}{k}\) and \(\mathcal{B}\subseteq\binom{[n]}{l}\) are cross-intersecting, then \(|\mathcal{A}||\mathcal{B}|\leq\binom{n - 1}{k - 1}\binom{n - 1}{l - 1}\). Moreover, the equality holds if and only if \(\mathcal{A}=\left\{A\in\binom{[n]}{k}:x\in A\right\}\) and \(\mathcal{B}=\left\{B\in\binom{[n]}{l}:x\in B\right\}\) for some fixed element \(x\in[n]\).
\end{theorem}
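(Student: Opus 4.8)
The plan is to combine the standard simultaneous compression with an induction on $n$, where the decisive structural input is that, once both families are compressed, their ``upper links'' at the largest coordinate are themselves cross-intersecting.

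First I would reduce to compressed families. For $i<j$ let $S_{ij}$ denote the usual $(i,j)$-shift; applying $S_{ij}$ to $\mathcal{A}$ and $\mathcal{B}$ simultaneously preserves $|\mathcal{A}|$ and $|\mathcal{B}|$ and keeps the pair cross-intersecting, and iterating over all pairs $i<j$ terminates with both families compressed. Since $|\mathcal{A}||\mathcal{B}|$ is unchanged, for the inequality I may assume from the outset that $\mathcal{A}$ and $\mathcal{B}$ are compressed and nonempty, the degenerate case being trivial. Then I induct on $n$, splitting by the top element. Write $\mathcal{A}_0=\{A\in\mathcal{A}:n\notin A\}$ and $\mathcal{A}_1=\{A\setminus\{n\}:n\in A\in\mathcal{A}\}$, similarly $\mathcal{B}_0,\mathcal{B}_1$, all viewed on $[n-1]$. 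Examining the four interaction types shows that $(\mathcal{A}_0,\mathcal{B}_0)$, $(\mathcal{A}_0,\mathcal{B}_1)$, and $(\mathcal{A}_1,\mathcal{B}_0)$ are each cross-intersecting on $[n-1]$, while pairs that both use $n$ intersect automatically.

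The crux is a fourth claim: $\mathcal{A}_1$ and $\mathcal{B}_1$ are cross-intersecting as well. Indeed, compression shows that for $A'\in\mathcal{A}_1$ every extension $A'\cup\{i\}$ with $i\in[n-1]\setminus A'$ lies in $\mathcal{A}_0$, and each such set must meet every $B'\in\mathcal{B}_1$; were $A'\cap B'=\emptyset$, this would force $[n-1]\setminus A'\subseteq B'$, impossible since $|[n-1]\setminus A'|=n-k\ge k\ge l>l-1=|B'|$ by $n\ge 2k\ge 2l$. Hence all four pairs obey the induction hypothesis, and expanding via Pascal's rule $\binom{n-1}{k-1}=\binom{n-2}{k-1}+\binom{n-2}{k-2}$ and $\binom{n-1}{l-1}=\binom{n-2}{l-1}+\binom{n-2}{l-2}$, the four products $|\mathcal{A}_0||\mathcal{B}_0|$, $|\mathcal{A}_0||\mathcal{B}_1|$, $|\mathcal{A}_1||\mathcal{B}_0|$, $|\mathcal{A}_1||\mathcal{B}_1|$ sum to at most $\binom{n-1}{k-1}\binom{n-1}{l-1}$, which is exactly $(|\mathcal{A}_0|+|\mathcal{A}_1|)(|\mathcal{B}_0|+|\mathcal{B}_1|)=|\mathcal{A}||\mathcal{B}|$.

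The inductive step requires $n-1\ge 2k$, so it runs only for $n\ge 2k+1$ and the base case $n=2k$ must be handled on its own; I expect this to be the main obstacle. For $k=l$ the base case, and in fact every $n\ge 2k$, is already supplied by Theorem~\ref{P86}(2); for $k>l$ at $n=2k$ I would argue directly, using complementation on $[2k]$, where each $A\in\mathcal{A}$ forbids $\mathcal{B}$ from meeting $\binom{[2k]\setminus A}{l}$, to pin down the extremal product. For the equality characterization I would track tightness back through the induction: equality forces equality in all four sub-bounds, so by the inductive characterization each link is a star, the cross-conditions together with compression align their centers to the single point $1$, and a standard un-shifting argument, namely that an extremal pair whose compression is a star must itself be a star, returns a common center $x\in[n]$ for the original $\mathcal{A}$ and $\mathcal{B}$.
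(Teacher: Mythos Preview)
This theorem is stated in the paper only as a cited background result from \cite{MT1989}; the paper does not supply its own proof, so there is no in-paper argument to compare against.

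On the merits of your proposal: the simultaneous-compression reduction and the four-way link decomposition at coordinate $n$ are correct, and your verification that $(\mathcal{A}_1,\mathcal{B}_1)$ is cross-intersecting is clean. The Pascal expansion does recombine the four inductive bounds into $\binom{n-1}{k-1}\binom{n-1}{l-1}$, so the inductive step for $n\ge 2k+1$ goes through.

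The genuine gap is exactly where you locate it: the base case $n=2k$ with $k>l$. Your complementation sketch does not close it. Observing that each $A\in\mathcal{A}$ excludes the $\binom{k}{l}$ sets in $\binom{[2k]\setminus A}{l}$ from $\mathcal{B}$ yields only an additive trade-off of the form $|\mathcal{B}|\le \binom{2k}{l}-(\text{something depending on }\mathcal{A})$, and the forbidden $l$-sets coming from distinct $A$'s overlap heavily, so one does not get the product bound $|\mathcal{A}||\mathcal{B}|\le \binom{2k-1}{k-1}\binom{2k-1}{l-1}$ this way without a further structural ingredient (in the original paper and in most treatments this is supplied by a Kruskal--Katona / Hilton-type lemma controlling cross-intersecting pairs via shadows). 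As written, the proposal is incomplete at this step; the equality analysis, which you also leave at the level of ``standard un-shifting'', would likewise need to be carried out once the base case is in hand.
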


Gromov \cite{G2010} found an application of these inequalities to geometry. For the general cross-\(t\)-intersecting case, Tokushige \cite{T2010} employed a combinatorial approach to derive the following analogous result.

\begin{theorem}[Tokushige \cite{T2010}]\label{T10}
Let \(n, k, t\) be positive integers such that \(t \leq k \leq n\) and \(n>2tk\). If \(\mathcal{A}\subseteq\binom{[n]}{k}\) and \(\mathcal{B}\subseteq\binom{[n]}{k}\) are cross-\(t\)-intersecting, then \(|\mathcal{A}||\mathcal{B}|\leq\binom{n - t}{k - t}^2\). Moreover, the equality holds if and only if \(\mathcal{A}=\mathcal{B}=\left\{F\in\binom{[n]}{k}:T\subset F\right\}\) for some \(t\)-subset \(T\) of \([n]\).
\end{theorem}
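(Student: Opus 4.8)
The plan is to reduce to compressed families by shifting and then analyze the resulting structure through generating sets, spending the hypothesis $n>2tk$ to show that any deviation from a common trivial star strictly lowers the product.

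First I would record the basic reduction. Applying the standard shift $S_{ij}$ ($i<j$) \emph{simultaneously} to $\mathcal A$ and $\mathcal B$ preserves $|\mathcal A|$ and $|\mathcal B|$, and --- this is the first point needing care --- it preserves the cross-$t$-intersecting property: if $A\in\mathcal A$, $B\in\mathcal B$ with $|A\cap B|\ge t$, one checks case by case that the shifted pair still meets in at least $t$ points, the only delicate case being when exactly one of $A,B$ is moved by the shift. Iterating, we may assume both families are left-compressed (stable under every $S_{ij}$); in particular, if they are nonempty then $[k]\in\mathcal A\cap\mathcal B$. We may also assume both are nonempty, since otherwise the product is $0$.

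Second, I would isolate the model case, which both pins down the extremal configuration and drives the whole argument. Suppose $\mathcal A$ is the full star $\mathcal S_{[t]}=\{A\in\binom{[n]}{k}:[t]\subseteq A\}$. Then for any $B$ with $[t]\not\subseteq B$, choosing $A\supseteq[t]$ with $A\setminus[t]$ disjoint from $B$ (possible since $n\ge 2k$) gives $A\cap B=[t]\cap B$, whence $|A\cap B|\le t-1$; hence every $B\in\mathcal B$ satisfies $[t]\subseteq B$, so $\mathcal B\subseteq\mathcal S_{[t]}$ and $|\mathcal A||\mathcal B|\le\binom{n-t}{k-t}^2$, with equality iff $\mathcal B=\mathcal S_{[t]}$. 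The remaining task is to prove that no compressed configuration beats this model.

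The core step is to control arbitrary compressed families via generating sets. Write $\mathcal A=\langle\mathcal G_{\mathcal A}\rangle$ and $\mathcal B=\langle\mathcal G_{\mathcal B}\rangle$, where $\langle\mathcal G\rangle:=\{F\in\binom{[n]}{k}:G\subseteq F\text{ for some }G\in\mathcal G\}$ and $\mathcal G_{\mathcal A},\mathcal G_{\mathcal B}$ are the inclusion-minimal generators (each of size between $t$ and $k$). The cross-$t$-intersecting property transfers to a condition on generators, and the essential phenomenon is a trade-off. For instance, if $\mathcal G_{\mathcal A}\supseteq\{[t],T'\}$ with $T'\ne[t]$, then applying the model-case computation to each of the stars $\mathcal S_{[t]}$ and $\mathcal S_{T'}$ forces every $B\in\mathcal B$ to contain $[t]\cup T'$, so $|\mathcal B|\le\binom{n-t-1}{k-t-1}$ while $|\mathcal A|\le 2\binom{n-t}{k-t}$; the product is then at most $2\,\tfrac{k-t}{n-t}\binom{n-t}{k-t}^2$, already below $\binom{n-t}{k-t}^2$ since $n>2k$. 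More generally, a single generator of size $s>t$ contributes only $\binom{n-s}{k-s}$, smaller than $\binom{n-t}{k-t}$ by a factor $O(k/n)$, so any generators other than one common $t$-set are ``lossy.'' Quantifying this uniformly --- bounding $|\langle\mathcal G_{\mathcal A}\rangle|\,|\langle\mathcal G_{\mathcal B}\rangle|$ by $\binom{n-t}{k-t}^2$ over all admissible pairs of generating sets, via inclusion--exclusion to control overlaps among many generators --- is the main obstacle, and it is exactly where the full strength $n>2tk$ is spent. Finally, for the equality characterization I would trace the above back: equality forces each generating set to be a single $t$-set and these to coincide, so the compressed extremizers are $\mathcal A=\mathcal B=\mathcal S_{[t]}$; undoing the shifts (and checking that the compression creates no spurious optima) yields $\mathcal A=\mathcal B=\{F:T\subseteq F\}$ for a genuine $t$-set $T$, as claimed.
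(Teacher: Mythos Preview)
This theorem is quoted in the paper as background from Tokushige (2010); the paper does not give its own proof of it, so there is nothing to compare your attempt against directly. (The paper's own techniques --- shifting plus the Ahlswede--Khachatrian generating-set machinery --- are deployed only for the main result, Theorem~\ref{TM}, under the different hypothesis $\min\{m,n\}\ge(t+1)(k-t+1)$ and $t\ge 3$.)

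That said, your proposal is not a proof but an outline that stops precisely at the hard step. The scaffolding is fine: simultaneous shifting preserves the cross-$t$-intersecting property, the star model case is correct, and passing to minimal generators is the right language. But the sentence ``Quantifying this uniformly \ldots\ is the main obstacle, and it is exactly where the full strength $n>2tk$ is spent'' is a description of what remains to be done, not an argument. Your two worked examples (a second $t$-generator forcing $|\mathcal B|\le\binom{n-t-1}{k-t-1}$, or a single generator of size $s>t$) do not cover the general case: a compressed family may have many generators of assorted sizes, and the inclusion--exclusion you invoke has to be carried out, not merely named. This is the actual content of the theorem. Likewise, ``undoing the shifts (and checking that the compression creates no spurious optima)'' is not a throwaway remark --- showing that $S_{ij}(\mathcal A)=S_{ij}(\mathcal B)=\mathcal S_{[t]}$ forces $\mathcal A=\mathcal B$ to already be a common star requires a genuine argument (compare the several-page Theorem~\ref{stl} here, which handles the analogous step for the paper's own result).
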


In 2013, Tokushige \cite{T2013} further refined this research by using the eigenvalue method to improve the lower bound on \(n\) for cross-\(t\)-intersecting families. 

\begin{theorem}[Tokushige \cite{T2013}]\label{T13}
Let \(n, k, t\) be positive integers such that \(t \leq k \leq n\) and \(\frac{k}{n}<1-\frac{1}{\sqrt[t]{2}}\). If \(\mathcal{A}\subseteq\binom{[n]}{k}\) and \(\mathcal{B}\subseteq\binom{[n]}{k}\) are cross-\(t\)-intersecting, then \(|\mathcal{A}||\mathcal{B}|\leq\binom{n - t}{k - t}^2\). Moreover, the equality holds if and only if \(\mathcal{A}=\mathcal{B}=\left\{F\in\binom{[n]}{k}:T\subset F\right\}\) for some \(t\)-subset \(T\) of \([n]\).
\end{theorem}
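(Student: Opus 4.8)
The plan is to attack the problem spectrally inside the Johnson association scheme on \(V=\binom{[n]}{k}\). For \(0\le s\le k\) let \(A_s\) be the \(0/1\) matrix with \((A_s)_{F,G}=1\) exactly when \(|F\cap G|=s\), and let \(V=V_0\perp V_1\perp\cdots\perp V_k\) be the common eigenspace decomposition, with \(P_j\) the orthogonal projection onto \(V_j\). The crucial observation is that \(\mathcal A,\mathcal B\subseteq\binom{[n]}{k}\) are cross-\(t\)-intersecting precisely when \(\mathbf 1_{\mathcal A}^{\top}A_s\mathbf 1_{\mathcal B}=0\) for every \(s\le t-1\); hence for any real coefficients \(c_0,\dots,c_{t-1}\) the matrix \(M=\sum_{s=0}^{t-1}c_sA_s\) satisfies \(\mathbf 1_{\mathcal A}^{\top}M\mathbf 1_{\mathcal B}=0\). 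Writing \(\eta_j=\sum_{s=0}^{t-1}c_sE_s(j)\) for the eigenvalue of \(M\) on \(V_j\) (where \(E_s(j)\) is the Eberlein polynomial giving the eigenvalue of \(A_s\) on \(V_j\)) and expanding in eigenspaces yields the identity
\[\eta_0\frac{|\mathcal A||\mathcal B|}{|V|}+\sum_{j\ge 1}\eta_j\,(P_j\mathbf 1_{\mathcal A})^{\top}(P_j\mathbf 1_{\mathcal B})=0.\]

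The design step is to choose \(c_0,\dots,c_{t-1}\) so that \(\eta_1=\eta_2=\cdots=\eta_t=:-\Lambda<0\); this is \(t-1\) linear conditions on \(t\) coefficients, so up to scaling there is a one-parameter solution, which I would normalise so that \(\eta_0>0\). Applying Cauchy–Schwarz in each eigenspace and then across eigenspaces, and using \(\sum_{j\ge 1}\|P_j\mathbf 1_{\mathcal A}\|^2=|\mathcal A|-|\mathcal A|^2/|V|\), the identity gives
\[\eta_0\,\frac{|\mathcal A||\mathcal B|}{|V|}\le \Lambda\sqrt{\Big(|\mathcal A|-\tfrac{|\mathcal A|^2}{|V|}\Big)\Big(|\mathcal B|-\tfrac{|\mathcal B|^2}{|V|}\Big)},\]
\emph{provided} \(|\eta_j|\le\Lambda\) for all \(j>t\). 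Because the star \(S=\{F:T\subseteq F\}\) is \(t\)-intersecting and has \(P_j\mathbf 1_S=0\) for \(j>t\), the same identity applied to \(\mathcal A=\mathcal B=S\) forces \(\Lambda/\eta_0=a_0/(1-a_0)\) with \(a_0=|S|/|V|=\binom{n-t}{k-t}/\binom{n}{k}\); substituting this and optimising the resulting inequality (the maximum of \(|\mathcal A||\mathcal B|\) occurring at \(|\mathcal A|=|\mathcal B|\)) gives exactly \(|\mathcal A||\mathcal B|\le\binom{n-t}{k-t}^2\).

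The hard part will be verifying the hypothesis \(|\eta_j|\le\Lambda\) for every \(j>t\); this is the only place the density condition enters. Here I would need explicit control of the Eberlein polynomials \(E_s(j)\) and of the chosen combination \(\eta_j\). After reducing to the extreme index the comparison becomes an inequality governed by powers of \((n-k)/n\), and the threshold \((1-k/n)^t>\tfrac12\), equivalently \(k/n<1-2^{-1/t}\), is exactly what is needed to certify \(|\eta_{t+1}|<\Lambda\) and hence dominance for all larger \(j\). Making these eigenvalue estimates uniform in \(j\) and sharp enough to land on the stated threshold is the main obstacle.

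Finally, for the equality characterisation I would trace back through the inequalities. Equality forces every Cauchy–Schwarz step to be tight: the projections \(P_j\mathbf 1_{\mathcal A}\) must vanish for all \(j>t\) (where \(|\eta_j|<\Lambda\) strictly), the vectors \(P_j\mathbf 1_{\mathcal A}\) and \(P_j\mathbf 1_{\mathcal B}\) must be proportional on \(V_1\oplus\cdots\oplus V_t\), and \(|\mathcal A|=|\mathcal B|=\binom{n-t}{k-t}\). Thus \(\mathbf 1_{\mathcal A}\) and \(\mathbf 1_{\mathcal B}\) lie in \(V_0\oplus V_1\oplus\cdots\oplus V_t\); invoking the standard characterisation of \(0/1\) vectors in this space (they are spanned by characteristic vectors of \(t\)-stars), together with a short combinatorial argument using the cross-\(t\)-intersecting property, I would conclude that \(\mathcal A=\mathcal B=\{F\in\binom{[n]}{k}:T\subseteq F\}\) for a common \(t\)-set \(T\).
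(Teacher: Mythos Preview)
The paper does not contain a proof of this statement: Theorem~\ref{T13} is quoted verbatim from Tokushige's 2013 paper \cite{T2013} as background, and no argument for it is given here. The paper's own methods (shifting and generating sets, Lemmas~\ref{lemkl}--\ref{IM3}) are aimed at the different hypothesis \(\min\{m,n\}\ge(t+1)(k-t+1)\) with \(t\ge3\), and are not used to recover the eigenvalue threshold \(k/n<1-2^{-1/t}\).

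That said, your proposal is essentially the method of the original source \cite{T2013}: work in the Johnson scheme, build a weighted adjacency matrix \(M=\sum_{s<t}c_sA_s\) whose eigenvalues on \(V_1,\dots,V_t\) coincide, and run a Hoffman-type ratio bound via Cauchy--Schwarz. You have correctly isolated the one genuine difficulty, namely the verification that \(|\eta_j|\le\Lambda\) for all \(j>t\); this is exactly where the hypothesis \((1-k/n)^t>\tfrac12\) enters, and Tokushige's paper carries out precisely this eigenvalue computation. So as a plan your sketch is sound and faithful to the original proof.

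One point to tighten: in the equality analysis, the assertion that ``\(0/1\) vectors in \(V_0\oplus\cdots\oplus V_t\) are characteristic vectors of \(t\)-stars'' is not literally a standard fact---that space is spanned by star indicators, but a Boolean vector in it need not be a single star. What actually pins down the structure is the combination of (i) \(P_j\mathbf 1_{\mathcal A}=0\) for \(j>t\), (ii) the exact size \(|\mathcal A|=\binom{n-t}{k-t}\), and (iii) the cross-\(t\)-intersecting constraint linking \(\mathcal A\) and \(\mathcal B\); Tokushige handles this step explicitly, and you should expect it to require a short but nontrivial argument rather than a one-line citation.
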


In the same paper \cite{T2013}, Tokushige conjectured that the lower bound on \(n\) in Theorem \ref{T13} could be improved to \(n\geq(t + 1)(k - t + 1)\). In 2014, Frankl, Lee, Siggers and Tokushige \cite{FLST2014} verified this  conjecture under the stronger assumptions that \(t\geq14\) and \(n\geq(t + 1)k\). In the same year, Borg \cite{B2014} independently confirmed the conjecture for sufficiently large \(n\). More recently, Zhang and Wu \cite{ZW2025} used shifting techniques and generating set analysis to confirm the conjecture for $t\geq 3.$ Also in 2025, Tanaka and Tokushige \cite{TT2025} employed a semidefinite programming approach to resolve the case $t=2.$

In 2013, Tokushige \cite{T2013} proposed a conjecture on a lower bound on \(n\) for non-uniform cross-\(t\)-intersecting families.
\begin{conjecture}[Tokushige \cite{T2013}]\label{TC13}
Let \(k\geq l\geq t\geq 1\) and \(n\geq (t+1)(k-t+1)\). Suppose that two families \(\mathcal{A}\subseteq\binom{[n]}{k}\) and \(\mathcal{B}\subseteq\binom{[n]}{l}\) are cross-\(t\)-intersecting. Then \(|\mathcal{A}||\mathcal{B}|\leq\binom{n - t}{k - t}\binom{n - t}{l - t}\).
\end{conjecture}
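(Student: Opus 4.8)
The plan is to reduce to left-compressed families by shifting, and then to argue according to whether the two families are themselves \(t\)-intersecting. First I would apply the simultaneous \((i,j)\)-shift \(S_{ij}\) (for all \(1\le i<j\le n\)) to the pair \((\mathcal{A},\mathcal{B})\). A routine verification shows that \(S_{ij}\) preserves \(|\mathcal{A}|\), \(|\mathcal{B}|\), and the cross-\(t\)-intersecting property, so after finitely many shifts I may assume that both \(\mathcal{A}\) and \(\mathcal{B}\) are left-compressed. Such a family is a down-set in the shifting (set-domination) order, hence is determined by its \emph{generating set} of shift-maximal members; the extremal star \(\{F:T\subseteq F\}\) becomes the star through \([t]\) after compression. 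The entire argument would then be carried out on these generating sets. One subtlety I would flag early: for compressed families the cross-\(t\)-intersecting condition cannot be tested on generators alone, so the generating-set analysis must encode the full intersection constraint through the structure of the generators — this is the technical heart of the \(t\ge 3\) treatment.

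Second, I would split into the dichotomy: either both \(\mathcal{A}\) and \(\mathcal{B}\) are \(t\)-intersecting, or at least one is not. If both are \(t\)-intersecting, the result follows immediately from Theorem \ref{AK97}: since \(n\ge (t+1)(k-t+1)=N_{0}(k,t)\), and \(n=(t+1)(k-t+1)\) lands exactly in case (ii) with \(r=0\), we get \(|\mathcal{A}|\le\binom{n-t}{k-t}\); likewise \(n\ge(t+1)(k-t+1)\ge(t+1)(l-t+1)=N_{0}(l,t)\) because \(l\le k\), so \(|\mathcal{B}|\le\binom{n-t}{l-t}\). Multiplying gives the desired bound, with equality forcing both families to be stars through a common \(t\)-set.

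Third comes the genuinely nontrivial case, say \(\mathcal{A}\) is not \(t\)-intersecting (the roles of \(\mathcal{A}\) and \(\mathcal{B}\) being symmetric). Then there exist \(A_{1},A_{2}\in\mathcal{A}\) with \(|A_{1}\cap A_{2}|\le t-1\), and every \(B\in\mathcal{B}\) must satisfy \(|B\cap A_{1}|\ge t\) and \(|B\cap A_{2}|\ge t\), whence \(|B\cap(A_{1}\cup A_{2})|\ge 2t-(t-1)=t+1\). Thus
\[
\mathcal{B}\subseteq\Bigl\{B\in\tbinom{[n]}{l}:|B\cap(A_{1}\cup A_{2})|\ge t+1\Bigr\},\qquad |A_{1}\cup A_{2}|\le 2k,
\]
which confines \(\mathcal{B}\) to a small family; reciprocally, the cross-\(t\)-intersecting condition with this confined \(\mathcal{B}\) bounds \(\mathcal{A}\). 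I would resolve this coupled system by using the shifting/generating-set structure to show that the product \(|\mathcal{A}||\mathcal{B}|\) is maximized among pairs of Ahlswede–Khachatrian families \(\mathcal{F}(n,k,t,r)\) and \(\mathcal{F}(n,l,t,r)\), and then performing a finite optimization over \(r\): a ratio/convexity estimate on \(|\mathcal{F}(n,k,t,r)|\,|\mathcal{F}(n,l,t,r)|\) shows the maximum occurs at \(r=0\) (the star) precisely when \(n\ge (t+1)(k-t+1)\).

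The main obstacle is exactly this third step. The threshold \(n=(t+1)(k-t+1)=N_{0}(k,t)\) is tight, so the competing families \(\mathcal{F}(n,k,t,r)\) with \(r\ge 1\) have sizes extremely close to the star, and for a \emph{product} of families with \emph{asymmetric} uniformities \(k\ge l\) one must show quantitatively that the loss forced on \(\mathcal{B}\) by any non-\(t\)-intersecting structure in \(\mathcal{A}\) always outweighs the corresponding gain in \(|\mathcal{A}|\). Controlling this trade-off uniformly — rather than the crude size bounds that suffice when \(n\) is large, as in Theorems \ref{T10} and \ref{T13} — is where the sharp Ahlswede–Khachatrian estimates and the careful generating-set bookkeeping become indispensable, and is the reason the argument must be organized around \(t\ge 3\).
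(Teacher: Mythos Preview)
Your proposal has a genuine gap in the third step, which you yourself flag as the ``main obstacle'' but do not actually resolve. The dichotomy ``both \(t\)-intersecting or not'' is a legal case split, but it puts the entire content of the theorem into the second branch: if both families are already \(t\)-intersecting then Theorem~\ref{AK97} trivially gives the bound, so what remains is precisely the hard case. Your proposed resolution there --- ``use the shifting/generating-set structure to show that the product is maximized among pairs \((\mathcal{F}(n,k,t,r),\mathcal{F}(n,l,t,r))\), then optimize over \(r\)'' --- is unjustified. Nothing in the argument forces an extremal cross-\(t\)-intersecting pair to be a pair of Ahlswede--Khachatrian families, let alone with the \emph{same} parameter \(r\); the observation \(|B\cap(A_1\cup A_2)|\ge t+1\) confines \(\mathcal{B}\) only to a very large set and is far too weak at the sharp threshold \(n=(t+1)(k-t+1)\). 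A minor side error: contrary to your ``subtlety'', for \(n>k+l-t\) the cross-\(t\)-intersection \emph{is} detectable on generators (any \(g(\mathcal{A}),g(\mathcal{B})\) are themselves cross-\(t\)-intersecting), and the paper uses exactly this.

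The paper's route is substantially different and supplies exactly the mechanism your outline lacks. It does not branch on individual \(t\)-intersection and never reduces to Ahlswede--Khachatrian families. Instead, assuming \(|\mathcal{A}||\mathcal{B}|\) maximal and both families left-compressed, it takes minimal generating sets and the parameter \(s=\max\{s^+(g(\mathcal{A})),s^+(g(\mathcal{B}))\}\), and runs an \emph{exchange argument} from Lemma~\ref{lemkl}(iii): for each level \(i\) with \(g_i^*(\mathcal{A})\neq\emptyset\) one can enlarge \(\mathcal{A}\) by \(\mathcal{D}(g_i^*(\mathcal{A})')\) while shrinking \(\mathcal{B}\) by \(\mathcal{D}(g_{s+t-i}^*(\mathcal{B}))\) (and the mirror move) and remain cross-\(t\)-intersecting. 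Maximality of the product converts these two exchanges into the coupled inequalities (\ref{gs11})--(\ref{gs22}); combining them with the Sperner-type lower bound (\ref{gs6}) on \(|\mathcal{A}|/|g_i^*(\mathcal{A})|\) yields algebraic constraints that Lemmas~\ref{IM1}--\ref{IM3} show are impossible whenever \(s\ge t+3\). The small cases \(s=t\), \(i\in\{t,s\}\), and \(s=t+2\) are finished by direct computation. These three calibration lemmas, and the exchange operation that generates the inequalities they contradict, are precisely the missing ingredients; without them a ``finite optimization over \(r\)'' at the exact threshold cannot be made to work.
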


In 2016, Borg \cite{B2016} verified the conjecture under the stronger assumptions that \(n\geq (t + 4)(k-t)+l-1\) and established the following general result.

\begin{theorem}[Borg \cite{B2016}]
Let \(m, n, k, l\) be positive integers such that \(1\leq t\leq l\leq k\),\ \(l\leq m\),\ \(k\leq n\) and \(\min\{m,n\}\geq(t + 4)(k-t)+l-1\). If \(\mathcal{A}\subseteq\binom{[n]}{k}\) and \(\mathcal{B}\subseteq\binom{[m]}{l}\) are cross-\(t\)-intersecting, then \(|\mathcal{A}||\mathcal{B}|\leq\binom{m - t}{l- t}\binom{n - t}{k - t}\).
\end{theorem}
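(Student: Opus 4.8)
The plan is to reduce to maximal, shifted families and then to compare the configuration against a $t$-star by a compression-towards-a-star argument. First I would dispose of the trivial case $\mathcal{A}=\emptyset$ or $\mathcal{B}=\emptyset$ and assume both families are nonempty. It suffices to prove the bound for \emph{shifted} families, since the simultaneous $(i,j)$-shifts with $i<j\le\min\{m,n\}$ act on the common coordinates, where the intersection $A\cap B$ lives, and hence preserve both the cardinalities and the cross-$t$-intersecting property. Within the analysis I may also freely assume the families are \emph{maximal}, replacing $\mathcal{A}$ by $\{A\in\binom{[n]}{k}:|A\cap B|\ge t\text{ for all }B\in\mathcal{B}\}$ and symmetrically for $\mathcal{B}$; this only enlarges the families while keeping them cross-$t$-intersecting, so it can only increase the product we are bounding.

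The core of the argument is a dichotomy according to whether $\mathcal{B}$ coincides with the canonical $t$-star $\mathcal{S}:=\{B\in\binom{[m]}{l}:[t]\subseteq B\}$. If $\mathcal{B}=\mathcal{S}$, then $|\mathcal{B}|=\binom{m-t}{l-t}$, and a direct argument forces $\mathcal{A}\subseteq\{A:[t]\subseteq A\}$: if some $A\in\mathcal{A}$ had $|A\cap[t]|<t$, then because $m$ is large relative to $k+l$ one can choose an $l$-set $B\supseteq[t]$ avoiding $A\setminus[t]$, whence $|A\cap B|<t$, a contradiction. This yields $|\mathcal{A}|\le\binom{n-t}{k-t}$ and the desired bound, with equality exactly when $\mathcal{A}$ is the full $t$-star on $[t]$ as well. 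The same reasoning applies symmetrically when $\mathcal{A}$ equals its canonical $t$-star.

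The substantive case is when neither family is its canonical $t$-star. The strategy is to show that moving $\mathcal{B}$ toward $\mathcal{S}$ — deleting the members lying outside $\mathcal{S}$ and inserting the missing members of $\mathcal{S}$, while re-optimizing $\mathcal{A}$ as the dual-maximum — cannot decrease $|\mathcal{A}||\mathcal{B}|$. Each deleted member $B\in\mathcal{B}\setminus\mathcal{S}$ costs one set in $\mathcal{B}$ but relaxes the constraints defining $\mathcal{A}$, and I would estimate the number of sets $A\in\binom{[n]}{k}$ newly admissible against the number of $\mathcal{S}$-members gained, via $t$-covering inequalities of Kruskal–Katona type, organizing $\mathcal{B}$ by the smallest $t$-set structure it generates. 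The hypothesis $\min\{m,n\}\ge(t+4)(k-t)+l-1$ supplies exactly the slack needed to make each such exchange non-decreasing, so that the star configuration dominates and the bound follows.

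I expect the main obstacle to be this quantitative trade-off: controlling simultaneously how much $\mathcal{B}$ can exceed (or fall short of) the star and how large the dual-maximal $\mathcal{A}$ can then be, since the two quantities move in opposite directions and the bound $(t+4)(k-t)+l-1$ must be shown to push the product past the break-even point of every exchange. Carrying this out cleanly requires applying the covering estimates uniformly over the generating structure of $\mathcal{B}$ and tracking the re-optimization of $\mathcal{A}$ at each step; the characterization of the extremal families then follows by tracing the equality conditions back through the shifting and maximality reductions to the two $t$-stars on a common $t$-set.
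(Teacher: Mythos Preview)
The paper does not prove this statement; it is quoted as a prior result of Borg and serves only as background for the paper's own Theorem~\ref{TM}, which sharpens the hypothesis to $\min\{m,n\}\ge(t+1)(k-t+1)$ for $t\ge3$ via the Ahlswede--Khachatrian generating-set machinery (Lemma~\ref{lemkl} together with the binomial inequalities of Lemmas~\ref{IM1}--\ref{IM3}). So there is no proof in the paper against which to compare your argument directly.

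On its own merits, your proposal has a genuine gap in the substantive case. You assert that each exchange --- deleting a set from $\mathcal{B}\setminus\mathcal{S}$, inserting the missing members of $\mathcal{S}$, and re-optimising $\mathcal{A}$ --- is non-decreasing for the product, and that the hypothesis on $\min\{m,n\}$ ``supplies exactly the slack needed.'' But this is the entire content of the theorem, and you provide no inequality that verifies it. Deleting a single $B$ from $\mathcal{B}$ alters the dual-maximal $\mathcal{A}$ in a way that depends on the whole of $\mathcal{B}$, not just on the deleted set, so the trade cannot be analysed one $B$ at a time as you describe; and the ``$t$-covering inequalities of Kruskal--Katona type'' that are supposed to carry the weight are neither stated nor shown to control the trade-off. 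By contrast, the method that actually succeeds in this paper (and in the antecedent work of Ahlswede--Khachatrian and Zhang--Wu) first passes to minimal generating sets $g(\mathcal{A})$, $g(\mathcal{B})$, isolates the parameter $s=\max\{s^{+}(g(\mathcal{A})),s^{+}(g(\mathcal{B}))\}$, and then shows by an explicit layer exchange between $g_i^{*}(\mathcal{A})$ and $g_{s+t-i}^{*}(\mathcal{B})$ that any configuration with $s>t$ can be strictly improved. Your plan does not locate this structure, and without it the ``exchange'' you describe is not a well-defined operation on the pair $(\mathcal{A},\mathcal{B})$, nor is there any quantitative lever to which the threshold $(t+4)(k-t)+l-1$ can be applied.
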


Note that in the precise expression of Borg's result, the condition on \(\min\{m,n\}\) is improved from \((t + 4)(k- t)+ l- 1\) to \((t + 2)(k-t)+ l- 1\) as \(t\) increases from \(1\) to \(7\). In 2025, He et al. \cite{HLWZ2026} employed shifting techniques and generating set analysis to confirm the conjecture for $t\geq 3.$ More recently, Chen et al. \cite{CLWZ2025} also used the generating set method to verify the Conjecture \ref{TC13} for $t=2$ and $n\geq 3.38l$. A natural generalization of Conjecture \ref{TC13} replaces the condition \(\mathcal{A}\subseteq\binom{[n]}{k}, \mathcal{B}\subseteq\binom{[n]}{l}\) with \(\mathcal{A}\subseteq\binom{[n]}{k}, \mathcal{B}\subseteq\binom{[m]}{l}\).  This general problem remains wide open. In this paper, we resolve it completely for all \(t\geq 3\) and \(\min\{m,n\}\geq (t+1)(k-t+1)\). Our main result is stated below.

\begin{theorem}\label{TM}
Let \(n, m, l, k\) be positive integers such that \(3\leq t\leq l\leq k\) and \(\min\{m, n\} \geq(t+1)(k-t+1)\). If \(\mathcal{A}\subseteq\binom{[n]}{k}\) and \(\mathcal{B}\subseteq\binom{[m]}{l}\) are two cross-\(t\)-intersecting families, then \(|\mathcal{A}||\mathcal{B}|\leq\binom{n-t}{k-t}\binom{m-t}{l-t}\). Moreover, the equality holds if and only if one of the following holds: 
\begin{itemize}
\item[{\rm (1)}]\(\mathcal{A}=\left\{A \in \binom{[n]}{k} : T \subset A \right\}\) and \(\mathcal{B}=\left\{B \in \binom{[m]}{l} : T \subset B \right\}\) for some \(t\)-element subset \(T\);
\item[{\rm (2)}] \(n=m=(t+1)(k-t+1),\ k=l\) and \(\mathcal{A}=\mathcal{B}=\left\{A \in \binom{[n]}{k} :  |A \cap T| \geq t+1\right\}\) for some \((t+2)\)-element subset \(T\).
\end{itemize}
\end{theorem}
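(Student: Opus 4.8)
The plan is to combine the shifting (compression) technique with an analysis of the generating sets of the resulting compressed families, and to invoke the Ahlswede--Khachatrian theorem (Theorem~\ref{AK97}) at the critical density $n=m=(t+1)(k-t+1)$ to pin down the extremal configurations. Throughout I would assume without loss of generality that $m\le n$ and that $\mathcal A,\mathcal B$ are \emph{maximal} cross-$t$-intersecting: replacing $\mathcal A$ by $\{A\in\binom{[n]}{k}:|A\cap B|\ge t\ \text{for all }B\in\mathcal B\}$ and symmetrically for $\mathcal B$ only enlarges the product while preserving the cross-$t$-intersecting property, so it suffices to prove the bound for maximal pairs and then trace the equality analysis back.

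First I would reduce to \emph{shifted} families. The key point, which must be checked with care because the two ground sets differ, is that one may left-compress $\mathcal A$ inside $[n]$ and $\mathcal B$ inside $[m]$ while preserving both the sizes and the cross-$t$-intersecting property: a shift $S_{ij}$ with $j\le m$ is applied simultaneously to $\mathcal A$ and $\mathcal B$ (the standard simultaneous-shifting lemma shows cross-$t$-intersection is preserved), whereas a shift $S_{ij}$ with $i\le m<j\le n$ is applied to $\mathcal A$ alone, and a direct computation shows $|A'\cap B|\ge|A\cap B|\ge t$ for the shifted set $A'$, since $j\notin B$ for every $B\in\mathcal B$. Iterating yields compressed families; since compression can only merge extremal families into the listed shifted ones, for the equality analysis it then suffices to identify the shifted optimizers and undo the compressions.

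Next I would exploit the structure of shifted families through their generating sets. For a shifted $\mathcal A$, every member contains a minimal generator, shiftedness forces these generators to be ``initial'' sets, and the cross-$t$-intersecting condition says precisely that each $B\in\mathcal B$ must meet every $k$-set containing a generator in at least $t$ points. The analysis then splits according to the smallest generator. If $\mathcal A$ admits a generator of size exactly $t$ (equivalently, the minimal intersection forced on $\mathcal B$ is a fixed $t$-set $T$), then maximality gives $\mathcal A=\{A:T\subseteq A\}$ and $\mathcal B=\{B:T\subseteq B\}$, producing the product $\binom{n-t}{k-t}\binom{m-t}{l-t}$ and the extremal pair~(1). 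The hypothesis $\min\{m,n\}\ge(t+1)(k-t+1)$ enters here to guarantee that any set missing $T$ can be completed to a $k$-set avoiding it, so that the star is genuinely forced.

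The remaining, and hardest, case is when neither family is a star, i.e.\ all generators have size strictly larger than $t$. Here I would bound $|\mathcal A||\mathcal B|$ by combining an upper bound on $|\mathcal B|$ in terms of the minimal generator size $d\ge t+1$ of $\mathcal A$ (the larger $d$ is, the more tightly $\mathcal B$ is confined, essentially to a Frankl-type family $\mathcal F(m,l,t,r)$) with the symmetric bound, and then optimizing over $d$; for $t\ge3$ and $\min\{m,n\}\ge(t+1)(k-t+1)$ these estimates are strong enough to force $|\mathcal A||\mathcal B|<\binom{n-t}{k-t}\binom{m-t}{l-t}$ unless we sit exactly at the boundary density. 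At that boundary $n=m=(t+1)(k-t+1)$ forces $k=l$, and the uniform Ahlswede--Khachatrian theorem (Theorem~\ref{AK97}(ii) with $r=0$, where $(k-t+1)(t+1)=n$), which there gives $|\mathcal F(n,k,t,0)|=|\mathcal F(n,k,t,1)|$ with the $(t+2)$-set family $\mathcal F(n,k,t,1)$ as the unique non-star optimizer, yields the extremal pair~(2). I expect this non-star estimate --- establishing the strict inequality for all $d\ge t+1$ away from the boundary while extracting exactly $\{A:|A\cap T|\ge t+1\}$ at the boundary, and doing so uniformly in the two distinct ground sizes $m\le n$ --- to be the main technical obstacle; the restriction $t\ge3$ is precisely what makes the generating-set estimates decisive, the cases $t\le2$ requiring the separate treatments cited above.
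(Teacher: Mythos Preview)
Your high-level plan---compress, pass to generating sets, and invoke Ahlswede--Khachatrian at the threshold---matches the paper. But two steps, one minor and one major, do not go through as you describe.

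\medskip
\textbf{The $t$-generator case.} You claim that if $\mathcal A$ has a generator $T$ of size $t$, then maximality forces both families to be stars on $T$. This is false. Having $T\in g(\mathcal A)$ with $|T|=t$ does force $\mathcal B\subseteq\{B:T\subseteq B\}$, but maximality of $\mathcal B$ is taken \emph{relative to $\mathcal A$}, and $\mathcal A$ may be strictly larger than the star. Concretely, for any $s>t$ the pair $\mathcal A=\{A:|A\cap[s]|\ge t\}$, $\mathcal B=\{B:[s]\subseteq B\}$ is a maximal cross-$t$-intersecting pair in which $\mathcal A$ has the generator $[t]$ yet neither family is a star. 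The paper (Case~1) disposes of such pairs by an explicit product comparison with the pair obtained by replacing $[s]$ with $[s-1]$; this is short but not automatic from ``maximality''.

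\medskip
\textbf{The non-star case.} Your description---``bound $|\mathcal B|$ in terms of the minimal generator size $d\ge t+1$ of $\mathcal A$, essentially by a Frankl family $\mathcal F(m,l,t,r)$, and optimize over $d$''---does not correspond to an argument that actually closes. There is no direct bound of the form $|\mathcal B|\le|\mathcal F(m,l,t,r)|$ coming from the generator size of $\mathcal A$, and optimizing a product of two such bounds separately would lose exactly the cross-information that makes the problem nontrivial. The paper's mechanism is different and specific: for the smallest $i$ with $g_i^{*}(\mathcal A)\ne\emptyset$, Lemma~\ref{lemkl}(iii) produces modified cross-$t$-intersecting pairs
\[
\mathcal A_1=\mathcal A\cup\mathcal D\bigl(g_i^{*}(\mathcal A)'\bigr),\quad
\mathcal B_1=\mathcal B\setminus\mathcal D\bigl(g_{s+t-i}^{*}(\mathcal B)\bigr),
\]
and symmetrically $\mathcal A_2,\mathcal B_2$. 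Maximality of $|\mathcal A||\mathcal B|$ then yields two multiplicative inequalities coupling $|\mathcal A|,|\mathcal B|,|g_i^{*}(\mathcal A)|,|g_{s+t-i}^{*}(\mathcal B)|$. These are combined with Sperner-type lower bounds on $|\mathcal A|/|g_i^{*}(\mathcal A)|$ and $|\mathcal B|/|g_{s+t-i}^{*}(\mathcal B)|$ (via $\nabla$ and left-compressedness), and the resulting system is shown to be infeasible by three dedicated inequality lemmas (Lemmas~\ref{IM1}--\ref{IM3}), one for each of the ranges $t+2\le i\le s-2$, $i=s-1$, $i=t+1$. This exchange-plus-Sperner argument is the technical core, and your proposal does not contain it; the phrase ``I expect this non-star estimate\ldots\ to be the main technical obstacle'' is accurate, but the obstacle is not surmounted by bounding each family by a Frankl family.
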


The methods used in this paper are the shift operator method \cite{EKR1961} and the generating set method \cite{AK1996, AK1997}. These techniques are versatile and can also be applied to address other types of extremal set problems. For instance, they can be used to determine the maximum product of sizes of cross-intersecting multi-part families, as well as the maximum sum of sizes of cross-intersecting families. The definitions and fundamental properties of these methods will be introduced in Section 2. In Section 3, we will present three crucial inequalities. In Section 4, we will provide a complete proof of our main result.  

\section{Preliminaries}
Let $\mathcal{A}$ be a family consisting of $k$-element subsets of $[n]$. For $i,j\in[n]$ and $A\in\mathcal{A}$, define 
\[
S_{ij}(A)= 
\begin{cases}
(A\backslash\{j\})\cup\{i\}, & \text{if } j\in A, i\notin A,(A\backslash\{j\})\cup\{i\}\notin \mathcal{A}; \\
A, & \text{otherwise,}
\end{cases}
\]
and set $S_{ij}(\mathcal{A}) = \{S_{ij}(A):A\in\mathcal{A}\}$ correspondingly. The procedure to obtain $S_{ij}(\mathcal{A})$ from $\mathcal{A}$ is called the {\em shift operation}, which was first introduced in \cite{EKR1961} (see also \cite{F1987}). We observe that $S_{ij}(\mathcal{A})$ has the same cardinality as $\mathcal{A}$ and is also $k$-uniform. We say that $\mathcal{A}$ is {\em left-compressed} if $S_{ij}(\mathcal{A})=\mathcal{A}$ for all ordered pairs $(i,j)$ with $1\leq i < j\leq n$. The following fact is well-known. 

\begin{fact}[\cite{F1987}] \label{lcl}
Let $\mathcal{A}$ and $\mathcal{B}$ be two families of subsets of $[n]$. If $\mathcal{A}$ and $\mathcal{B}$ are cross $t$-intersecting, then $S_{i,j}(\mathcal{A})$ and $S_{i,j}(\mathcal{B})$ are cross-$t$-intersecting with $|S_{i,j}(\mathcal{A})| = |\mathcal{A}|$ and $|S_{i,j}(\mathcal{B})| = |\mathcal{B}|$.
\end{fact}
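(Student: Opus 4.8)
The plan is to establish the two assertions separately. For the cardinality statement I would show that the map $S_{ij}$ restricted to $\mathcal{A}$ is injective; since $S_{ij}(\mathcal{A})$ is by definition its image, injectivity immediately gives $|S_{ij}(\mathcal{A})| = |\mathcal{A}|$, and symmetrically for $\mathcal{B}$. To prove injectivity, suppose $S_{ij}(A) = S_{ij}(A')$ with $A, A' \in \mathcal{A}$. If both sets are left fixed, then $A = A'$ trivially; if both are genuinely shifted, then cancelling the common deletion of $j$ and insertion of $i$ forces $A = A'$. The only other possibility is that exactly one of them, say $A$, is shifted while $A'$ is fixed, in which case $A' = (A \setminus \{j\}) \cup \{i\}$; but this set lies outside $\mathcal{A}$ by the very condition defining the shift, contradicting $A' \in \mathcal{A}$. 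This settles the cardinality part.

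For the cross-$t$-intersecting property I would fix arbitrary $A \in \mathcal{A}$ and $B \in \mathcal{B}$ and bound $|S_{ij}(A) \cap S_{ij}(B)|$ below by $t$, using the hypothesis $|A \cap B| \geq t$. The argument splits according to which of the two sets the shift actually moves. When neither moves, the intersection is literally unchanged. When both move, each loses $j$ and gains $i$, so $S_{ij}(A) \cap S_{ij}(B) = ((A \cap B) \setminus \{j\}) \cup \{i\}$ has the same size as $A \cap B$ (here $j \in A \cap B$ while $i \notin A \cap B$), and the bound persists. The delicate case is when exactly one set is moved; by the symmetry between $\mathcal{A}$ and $\mathcal{B}$ I may assume $A$ is shifted, so $j \in A$, $i \notin A$, and $(A \setminus \{j\}) \cup \{i\} \notin \mathcal{A}$, while $B$ is fixed.

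Within this case I would split according to the reason $B$ is fixed. If $j \notin B$, then deleting $j$ from $A$ does not affect the intersection with $B$ while adjoining $i$ can only enlarge it, so $|S_{ij}(A) \cap B| \geq |A \cap B| \geq t$. If $i \in B$, then the newly inserted $i$ lands in the intersection and compensates for the possible loss of $j$, so again the size does not drop below $t$. The crux is the remaining reason: $B$ is fixed because $B^* := (B \setminus \{j\}) \cup \{i\} \in \mathcal{B}$, with $j \in B$ and $i \notin B$. Here the naive estimate only yields $|S_{ij}(A) \cap B| \geq t - 1$, since $A$ loses the element $j$ of $A \cap B$ and gains nothing in $B$. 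The extra input is precisely that $A$ and $B^*$ are themselves cross-$t$-intersecting: because $i \notin A$, one computes $|A \cap B^*| = |A \cap B| - 1$, and $|A \cap B^*| \geq t$ then forces $|A \cap B| \geq t + 1$, which upgrades the naive bound back up to $|S_{ij}(A) \cap B| \geq t$.

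The main obstacle is exactly this last sub-case, where the membership condition $(B \setminus \{j\}) \cup \{i\} \in \mathcal{B}$ baked into the definition of the shift must be converted into a usable inequality; the key move is to apply the cross-$t$-intersecting hypothesis not to the pair $(A, B)$ but to the pair $(A, B^*)$. Once this is in hand, assembling all the cases shows that $S_{ij}(\mathcal{A})$ and $S_{ij}(\mathcal{B})$ are cross-$t$-intersecting, which together with the cardinality identities completes the proof.
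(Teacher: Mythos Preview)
Your argument is correct and is exactly the standard proof of this shifting fact. Note, however, that the paper does not actually prove Fact~\ref{lcl}: it is stated as a well-known result with a citation to Frankl's survey \cite{F1987}, so there is no ``paper's own proof'' to compare against. Your case analysis---in particular the handling of the asymmetric sub-case via the auxiliary set $B^{*}=(B\setminus\{j\})\cup\{i\}\in\mathcal{B}$ and the inequality $|A\cap B^{*}|\geq t$---is the classical one.
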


For a subset $E$ of $[n]$, we set $s^{+}(E)=\max\{i:i\in E\}$, $\mathcal{U}(E)=\{A\subseteq[n]:E\subseteq A\}$ and $\mathcal{D}(E)=\left\{B\in\binom{[n]}{k}:B\cap[s^{+}(E)] = E\right\}$. For $\mathcal{E}\subseteq 2^{[n]},$ we set $s^{+}(\mathcal{E})=\max\{s^{+}(E):E\in\mathcal{E}\}$, $\mathcal{U}(\mathcal{E})=\bigcup_{E\in\mathcal{E}}\mathcal{U}(E)$ and $\mathcal{D}(\mathcal{E})=\bigcup_{E\in\mathcal{E}}\mathcal{D}(E)$ correspondingly. Let $\emptyset\neq \mathcal{F}\subset \binom{[n]}{k}$, we say that a family $g(\mathcal{F})\subseteq\bigcup_{i\leq k}\binom{[n]}{i}$ is a {\em generating set} of $\mathcal{F}$ if $\mathcal{U}(g(\mathcal{F}))\cap\binom{[n]}{k}=\mathcal{F}$. It is clear that $\mathcal{F}$ is a generating set of itself. The set of all generating sets of $\mathcal{F}$ forms a nonempty set, which we denote by $G(\mathcal{F})$. The notion of generating sets of a $k$-uniform family was first introduced in \cite{AK1997}. For $g(\mathcal{F})\in G(\mathcal{F})$, let $g_{*}(\mathcal{F})$ be the set of all minimal elements (in the sense of set-theoretical inclusion) of $g(\mathcal{F})$. Set $G_{*}(\mathcal{F})=\{g(\mathcal{F})\in G(\mathcal{F}):g(\mathcal{F}) = g_{*}(\mathcal{F})\}$. Clearly, $G_{*}(\mathcal{F})\neq \emptyset$. For $g(\mathcal{F})\in G_{*}(\mathcal{F})$, let $s = s^{+}(g(\mathcal{F}))$, set $g^{*}(\mathcal{F})=\{E\in g(\mathcal{F}):s\in E\}$, $g_{i}^{*}(\mathcal{F})=\{E\in g^{*}(\mathcal{F}):|E| = i\}$ and $g_{i}^{*}(\mathcal{F})'=\{E\backslash\{s\}:E\in g_{i}^{*}(\mathcal{F})\}$ for $1\leq i\leq s$. From \cite{AK1997}, we know that the generating sets have the following properties.

\begin{lemma}[Ahlswede and Khachatrian \cite{AK1997}]\label{AKGS}
Let \(\mathcal{F}\) be a left-compressed \(t\)-intersecting subfamily of \(\binom{[n]}{k}\), \(g(\mathcal{F}) \in G_{*}(\mathcal{F})\) and \(s = s^{+}(g(\mathcal{F}))\). Then the following statements hold.

\begin{enumerate}[(i)]
    \item If \(n > 2k - t\), then \(|E_1 \cap E_2| \geq t\) for all \(E_1, E_2 \in g(\mathcal{F})\).

    \item For \(1 \leq i < j \leq s\) and \(E \in g(\mathcal{F})\), one has either \(S_{ij}(E) \in g(\mathcal{F})\) or \(F \subset S_{ij}(E)\) for some \(F \in g(\mathcal{F})\).

    \item \(\mathcal{F}\) is a disjoint union \(\mathcal{F} = \bigcup_{E \in g(\mathcal{F})} \mathcal{D}(E)\).

    \item If \(\mathcal{F}\) is maximal, then for any \(E_1, E_2 \in g^{*}(\mathcal{F})\) with \(|E_1 \cap E_2| = t\), necessarily \(|E_1| + |E_2| = s + t\) and \(E_1 \cup E_2 = [s]\). Furthermore, if \(g^{*}_{i}(\mathcal{F}) \neq \emptyset\), then \(g^{*}_{s + t - i}(\mathcal{F}) \neq \emptyset\) and for any \(E_1 \in g^{*}_{i}(\mathcal{F})\), there exists \(E_2 \in g^{*}_{s + t - i}(\mathcal{F})\) with \(|E_1 \cap E_2| = t\) and \(E_1 \cup E_2 = [s]\).

    \item If \(g^{*}_{i}(\mathcal{F}) \neq \emptyset\), then \(\mathcal{F}_1 = \mathcal{F} \cup \mathcal{D}(g^{*}_{i}(\mathcal{F})) \setminus \mathcal{D}(g^{*}_{s + t - i}(\mathcal{F}))\) is also a \(t\)-intersecting subfamily of \(\binom{[n]}{k}\) with
    \[
    |\mathcal{F}_1| = |\mathcal{F}| + |g^{*}_{i}(\mathcal{F})| \binom{n - s}{k - i + 1} - |g^{*}_{s + t - i}(\mathcal{F})| \binom{n -s}{k + i -s - t}.
    \]
\end{enumerate}
\end{lemma}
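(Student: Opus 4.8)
The plan is to establish the five assertions in the order stated, each feeding into the next, under the standing assumption $n>2k-t$ (which holds throughout the paper's range, so that (i) may be used freely in the later parts). For (i) I would argue by contradiction via a direct extension count. Suppose $E_1,E_2\in g(\mathcal F)$ with $|E_1\cap E_2|<t$. Extend them to $k$-sets $A_1\supseteq E_1$ and $A_2\supseteq E_2$, which lie in $\mathcal F$ because each $E_j$ generates, and choose the added elements to overlap as little as possible. The minimum attainable value of $|A_1\cap A_2|$ is exactly $\max\{|E_1\cap E_2|,\,2k-n\}$: the bound $|A_1\cap A_2|\ge 2k-n$ is forced by $|A_1\cup A_2|\le n$, and it is realized once there is room to place the free elements disjointly outside $E_1\cup E_2$. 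Since both $|E_1\cap E_2|<t$ and $2k-n<t$, we obtain a pair in $\mathcal F$ with intersection below $t$, a contradiction.

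For (ii) I would exploit left-compressedness. Writing $E'=S_{ij}(E)$, the only nontrivial case is $j\in E,\ i\notin E$, so $E'=(E\setminus\{j\})\cup\{i\}$, and the key claim is $\mathcal U(E')\cap\binom{[n]}{k}\subseteq\mathcal F$. Indeed, for a $k$-set $A\supseteq E'$: if $j\in A$ then $A\supseteq E$, hence $A\in\mathcal F$; if $j\notin A$, then $A^{\ast}=(A\setminus\{i\})\cup\{j\}\supseteq E$ lies in $\mathcal F$, and since $S_{ij}(\mathcal F)=\mathcal F$, were $A\notin\mathcal F$ the shift would map $A^{\ast}\mapsto A$, forcing $A\in\mathcal F$. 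Padding $E'$ with elements exceeding $s$ to a $k$-set $A\in\mathcal F$ and choosing $F\in g(\mathcal F)$ with $F\subseteq A$, the inclusion $F\subseteq[s]$ gives $F\subseteq A\cap[s]=E'$, which is the conclusion. Part (iii) then follows quickly: disjointness is the antichain property, since $B\in\mathcal D(E_1)\cap\mathcal D(E_2)$ with $s^{+}(E_1)\le s^{+}(E_2)$ yields $E_1=E_2\cap[s^{+}(E_1)]\subseteq E_2$, whence $E_1=E_2$; and for covering, given $B\in\mathcal F$ I would take a generator $E\subseteq B$ with $s^{+}(E)$ minimal and show $B\cap[s^{+}(E)]=E$, using that any $x\in(B\cap[s^{+}(E)])\setminus E$ has $x<s^{+}(E)$, so applying (ii) to $S_{x,s^{+}(E)}(E)\subseteq B$ produces a generator inside $B$ with smaller top element, contradicting minimality.

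For (iv), the equality $E_1\cup E_2=[s]$ (equivalently $|E_1|+|E_2|=s+t$) already follows from (i) and (ii): if $z\in[s]\setminus(E_1\cup E_2)$, then $z<s$ and the shift $S_{z,s}(E_2)=(E_2\setminus\{s\})\cup\{z\}$ contains, by (ii), a generator $F$ with $|E_1\cap F|\le|E_1\cap S_{z,s}(E_2)|=t-1$, contradicting (i). The substantive, maximality-dependent part is the existence of the partner, and I expect this to be the main obstacle, since it is exactly where maximality must be turned into a statement about one generator. For $E_1\in g^{*}_{i}(\mathcal F)$ I would form the $k$-set $C=(E_1\setminus\{s\})\cup Y$ with $Y$ a block of top elements above $s$; the antichain property gives $C\notin\mathcal F$, so maximality yields $A\in\mathcal F$ with $|A\cap C|\le t-1$, and its generator $E_2\subseteq A$ then satisfies $|E_2\cap C|\le t-1$. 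Because $E_2\subseteq[s]$ and $C\cap[s]=E_1\setminus\{s\}$, part (i) rules out $s\notin E_2$ and forces $|E_1\cap E_2|=t$; the first part of (iv) then identifies $E_2$ as a partner in $g^{*}_{s+t-i}(\mathcal F)$.

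For (v), the relevant operation pushes $s$ out of the small generators and deletes the dual class: for each $E\in g^{*}_{i}(\mathcal F)$ one adjoins the $k$-sets $B$ with $B\cap[s]=E\setminus\{s\}$ (there are $\binom{n-s}{k-i+1}$ of them, matching the stated cardinality) and removes $\mathcal D(g^{*}_{s+t-i}(\mathcal F))$. The count is read off from (iii) together with the antichain property, which also shows the adjoined sets are new. The crux is that $\mathcal F_1$ stays $t$-intersecting, and here (iv) does the work: for an adjoined set $B$ and a retained set $C\in\mathcal D(F)$, the intersection $B\cap C$ contains $(E\cap F)\setminus\{s\}$, which has size $\ge t$ unless $F\in g^{*}(\mathcal F)$ meets $E$ in exactly $t$ elements—and by (iv) those $F$ are precisely the deleted partners of size $s+t-i$. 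The same dichotomy, combined with the fact that two size-$i$ generators in $g^{*}(\mathcal F)$ meet in at least $t+1$ elements whenever $2i\ne s+t$, handles the pairs of adjoined sets, which is the range in which the swap is applied. Assembling the three cases gives that $\mathcal F_1$ is $t$-intersecting, completing the lemma.
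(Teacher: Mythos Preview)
The paper does not give its own proof of this lemma: it is stated with attribution to Ahlswede and Khachatrian and used as a black box (the sentence introducing it reads ``From \cite{AK1997}, we know that the generating sets have the following properties''). Your argument is correct and is precisely the standard Ahlswede--Khachatrian reasoning. The closest the paper comes to reproving any of it is in the proof of the cross-$t$-intersecting analogue, Lemma~\ref{lemkl}, and the steps there mirror yours: part (i) of that lemma is deduced directly from (ii) here; the argument that $E\cup F_0=[s]$ in part (ii) there is exactly your shift-then-contradict-(i) manoeuvre from (iv); and the cross-$t$-intersection check in part (iii) there is your case split from (v), with the cardinality formula explicitly deferred back to (v) of the present lemma. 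Your parenthetical caveat in (v) about the range $2i\ne s+t$ is appropriate and not a gap on your side: when $2i=s+t$ the partners supplied by (iv) lie in $g^*_i$ itself, and the swap as literally stated can fail to preserve $t$-intersection, so this is a limitation of the lemma's formulation rather than of your argument.
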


Let $\mathcal{A}$ and $\mathcal{B}$ be two cross-$t$-intersecting subfamilies of $\binom{[n]}{k}$. For $n>2k-t$, it is easy to see that $\mathcal{A}$ and $\mathcal{B}$ are cross-$t$-intersecting if and only if $g(\mathcal{A})$ and $g(\mathcal{B})$ are cross-$t$-intersecting for all $g(\mathcal{A})\in G(\mathcal{A})$ and $g(\mathcal{B})\in G(\mathcal{B})$. Zhang and Wu \cite{ZW2025} obtained the following properties of the generating sets of $\mathcal{A}$ and $\mathcal{B}$.

\begin{lemma}[Zhang and Wu \cite{ZW2025}]\label{ZW}
Let $\mathcal{A}$ and $\mathcal{B}$ be two maximal left-compressed cross-$t$-intersecting subfamilies of $\binom{[n]}{k}$ with $n>2k-1$. Let $g(\mathcal{A})\in G_{*}(\mathcal{A})$ and $g(\mathcal{B})\in G_{*}(\mathcal{B})$ such that $s:=\max\{s^{+}(g(\mathcal{A})),s^{+}(g(\mathcal{B}))\}$ is minimal. Then the following statements hold.
\begin{enumerate}[(i)]
    \item For $1\leq i<j\leq s$, $\mathcal{F}\in\{\mathcal{A},\mathcal{B}\}$ and $E\in g(\mathcal{F})$, one has either $S_{ij}(E)\in g(\mathcal{F})$ or $F\subset S_{ij}(E)$ for some $F\in g(\mathcal{F})$.
    \item For $t\leq i\leq k$, $g_{i}^{*}(\mathcal{A})\neq\emptyset$ if and only if $g_{s + t - i}^{*}(\mathcal{B})\neq\emptyset$. Furthermore, for each $E\in g_{i}^{*}(\mathcal{A})$, there exists $F\in g_{s + t - i}^{*}(\mathcal{B})$ such that $|E\cap F| = t$ and $E\cup F=[s]$.
    \item If $g_{i}^{*}(\mathcal{A})\neq\emptyset$, then the families $\mathcal{A}_{1}=\mathcal{A}\cup \mathcal{D}(g_{i}^{*}(\mathcal{A})')\subset \binom{[n]}{k}$ and $\mathcal{B}_{1}=\mathcal{B}\setminus \mathcal{D}(g_{s + t - i}^{*}(\mathcal{B}))\subset \binom{[n]}{k}$ remain cross-$t$-intersecting families with sizes given by
    \[|\mathcal{A}_{1}|=|\mathcal{A}|+|g_{i}^{*}(\mathcal{A})|\binom{n - s}{k - i+1}\quad\text{and}\quad|\mathcal{B}_{1}|=|\mathcal{B}|-|g_{s + t - i}^{*}(\mathcal{B})|\binom{n - s}{k + i - s - t}.\]
\end{enumerate}
\end{lemma}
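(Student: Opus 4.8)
The plan is to prove this as the cross-$t$-intersecting counterpart of Lemma~\ref{AKGS}, establishing the three items in order and reusing the single-family arguments wherever the cross condition is not actually needed. Throughout I would use the observation preceding the lemma: since $n>2k-1>2k-t$, the generating sets are cross-$t$-intersecting, i.e. $|E\cap F|\geq t$ for all $E\in g(\mathcal{A})$ and $F\in g(\mathcal{B})$. I would also use that $g(\mathcal{A}),g(\mathcal{B})$ are antichains of minimal generators contained in $[s]$ (because $s^{+}(g(\mathcal A)),s^{+}(g(\mathcal B))\le s$), the disjoint decompositions $\mathcal{A}=\bigcup_{E\in g(\mathcal{A})}\mathcal{D}(E)$ and $\mathcal{B}=\bigcup_{F\in g(\mathcal{B})}\mathcal{D}(F)$ (the analogue of Lemma~\ref{AKGS}(iii), which holds for any left-compressed family once (i) is known), and the saturation coming from maximality: a $k$-set lies in $\mathcal{A}$ exactly when it is cross-$t$-intersecting with $\mathcal{B}$, and symmetrically, so that $F$ generates into $\mathcal{B}$ iff $|F\cap E'|\ge t$ for every $E'\in g(\mathcal{A})$.

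Item (i) does not involve the cross condition at all and its proof is identical to that of Lemma~\ref{AKGS}(ii). Fixing $\mathcal{F}\in\{\mathcal{A},\mathcal{B}\}$, $E\in g(\mathcal{F})$ and $1\le i<j\le s$, I may assume $j\in E,\ i\notin E$ and set $E'=S_{ij}(E)$. I would show every $A\in\binom{[n]}{k}$ with $E'\subseteq A$ lies in $\mathcal{F}$: if $j\in A$ then $A\supseteq (E'\setminus\{i\})\cup\{j\}=E$; if $j\notin A$ then $A^{*}=(A\setminus\{i\})\cup\{j\}\supseteq E$ lies in $\mathcal{F}$, and left-compressedness $S_{ij}(\mathcal{F})=\mathcal{F}$ forces $A=S_{ij}(A^{*})\in\mathcal{F}$. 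Hence $g(\mathcal{F})\cup\{E'\}$ still generates $\mathcal{F}$, and minimality of $g(\mathcal{F})$ yields $E'\in g(\mathcal{F})$ or $F\subset E'$ for some $F\in g(\mathcal{F})$.

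The heart of the argument is item (ii). Fix $E\in g_{i}^{*}(\mathcal{A})$; I would produce a complement of the form $F=([s]\setminus E)\cup T$ with $T\subseteq E$, $|T|=t$, $s\in T$, and show it generates into $\mathcal{B}$. By the saturation above this amounts to $|F\cap E'|\ge t$ for every $E'\in g(\mathcal{A})$, which I would extract from the shift-closed structure of $g(\mathcal{A})$ provided by (i). Taking a minimal generator $F'\subseteq F$ in $g(\mathcal{B})$, the relations $F'\cap E\subseteq T$ and $|F'\cap E|\ge t=|T|$ force $F'\cap E=T$, hence $s\in F'$ and $s^{+}(g(\mathcal{B}))=s$; the exactness $E\cup F'=[s]$ (equivalently $|F'|=s+t-i$, so $F'\in g_{s+t-i}^{*}(\mathcal{B})$) then follows from the minimality of $s$, since a coordinate $p\in[s]\setminus(E\cup F')$ would be redundant and applying $S_{ps}$ to all generators would lower $\max\{s^{+}(g(\mathcal{A})),s^{+}(g(\mathcal{B}))\}$, contradicting the minimal choice. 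The reverse implication of the ``if and only if'' is symmetric in $\mathcal{A}$ and $\mathcal{B}$. I expect this step to be the main obstacle: verifying that the constructed complement is cross-$t$-intersecting with $\mathcal{A}$, and obtaining the simultaneous tightness $|E\cap F'|=t$ and $E\cup F'=[s]$, is precisely where maximality (to place $F'$ in $\mathcal{B}$), the shift-closure of (i), and the minimality of $s$ must all be combined, and one must rule out degenerate pairs at once.

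For item (iii) I would verify cross-$t$-intersection of $(\mathcal{A}_{1},\mathcal{B}_{1})$ directly. A set in $\mathcal{A}_{1}\setminus\mathcal{A}$ is an $A$ with $A\cap[s]=E\setminus\{s\}$ for some $E\in g_{i}^{*}(\mathcal{A})$; take $B\in\mathcal{B}_{1}$ generated by $F\in g(\mathcal{B})$. If $s\notin F$ or $|E\cap F|\ge t+1$, then $|(E\setminus\{s\})\cap F|\ge t$, giving $|A\cap B|\ge t$. The only dangerous case is $|E\cap F|=t$ with $s\in E\cap F$, but then (ii) forces $F\in g_{s+t-i}^{*}(\mathcal{B})$ and $B\in\mathcal{D}(F)\subseteq\mathcal{D}(g_{s+t-i}^{*}(\mathcal{B}))$, contradicting $B\in\mathcal{B}_{1}$. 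For the sizes, the sets added to $\mathcal{A}$ are genuinely new: every generator of $\mathcal{A}$ lies in $[s]$, so an $A$ with $s\notin A$ and $A\cap[s]=E\setminus\{s\}$ can contain no generator; counting the $k-i+1$ free coordinates in $[n]\setminus[s]$ over the distinct sets $E\setminus\{s\}$ gives $|\mathcal{A}_{1}|=|\mathcal{A}|+|g_{i}^{*}(\mathcal{A})|\binom{n-s}{k-i+1}$, while the disjoint decomposition of $\mathcal{B}$ together with $|\mathcal{D}(F)|=\binom{n-s}{k+i-s-t}$ for $F\in g_{s+t-i}^{*}(\mathcal{B})$ yields the stated value of $|\mathcal{B}_{1}|$.
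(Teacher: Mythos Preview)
Your outline for (i) is fine and matches the paper. The real gap is in (ii). You propose to \emph{construct} $F=([s]\setminus E)\cup T$ and then show $|F\cap E'|\ge t$ for all $E'\in g(\mathcal{A})$ so that saturation places $F$ into $\mathcal{B}$; but in the cross setting $g(\mathcal{A})$ need not be $t$-intersecting with itself, and the shift-closure (i) gives you no control over $|F\cap E'|$ for an externally built $F$. You specify no choice of $T$ and give no argument, and you yourself flag this as ``the main obstacle''---it is, and it is not resolved. The paper (see the proof of Lemma~\ref{lemkl}, which specializes to the present statement) bypasses any construction: by maximality, $E':=E\setminus\{s\}$ cannot satisfy $|E'\cap F|\ge t$ for every $F\in g(\mathcal{B})$, since otherwise $\mathcal{A}\cup\mathcal{D}(E')\supsetneq\mathcal{A}$ would still be cross-$t$-intersecting with $\mathcal{B}$; hence some $F_0\in g(\mathcal{B})$ already witnesses $|E\cap F_0|=t$ and $s\in F_0$. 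Your argument for $E\cup F'=[s]$ via ``apply $S_{ps}$ to all generators and lower $s$'' is also not valid as stated, because the shifted collections are not obviously in $G_*(\mathcal{A})$, $G_*(\mathcal{B})$. The correct (and the paper's) argument is local: if $p\in[s]\setminus(E\cup F_0)$, then (i) produces a generator $A_1\subseteq (E\setminus\{s\})\cup\{p\}$ in $g(\mathcal{A})$, and $|A_1\cap F_0|\le t-1$ contradicts the cross-$t$-intersection of $g(\mathcal{A})$ and $g(\mathcal{B})$.

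A smaller issue in (iii): you invoke (ii) to conclude $F\in g_{s+t-i}^{*}(\mathcal{B})$ whenever $|E\cap F|=t$ and $s\in E\cap F$, but (ii) only guarantees the \emph{existence} of such a partner for each $E$; it does not say every $F\in g(\mathcal{B})$ with these two properties has size $s+t-i$. One must rerun the union argument for this particular pair to obtain $E\cup F=[s]$ (hence $|F|=s+t-i$), exactly as the paper does (``a proof process similar to that of (ii)'').
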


It is easy to generalize the cases for the non-uniform situation. Let \(\mathcal{A}\subseteq \binom{[n]}{k}\) and \(\mathcal{B}\subseteq \binom{[m]}{l}\) be two cross-$t$-intersecting families. For $\min\{m, n\}>k+l-t$, it is easy to find that $\mathcal{A}$ and $\mathcal{B}$ are cross-$t$-intersecting if and only if $g(\mathcal{A})$ and $g(\mathcal{B})$ are cross-$t$-intersecting for all $g(\mathcal{A})\in G(\mathcal{A})$ and $g(\mathcal{B})\in G(\mathcal{B})$. By applying the shift operations if necessary, we may assume without loss of generality that both $\mathcal{A}$ and $\mathcal{B}$ are left-compressed. Similarly, for the generating sets of $\mathcal{A}$ and $\mathcal{B}$, we have the following result.

\begin{lemma}\label{lemkl}
Let \(\mathcal{A}\subseteq \binom{[n]}{k}\) and \(\mathcal{B}\subseteq \binom{[m]}{l}\) be two maximal left-compressed cross-$t$-intersecting families with $\min\{m, n\}>k+l-t$, $g(\mathcal{A})\in G_{*}(\mathcal{A})$, $g(\mathcal{B})\in G_{*}(\mathcal{B})$ such that $s:=\max\{s^{+}(g(\mathcal{A})),s^{+}(g(\mathcal{B}))\}$ is minimal. Then the following statements hold.
\begin{enumerate}[(i)]
    \item For $1\leq i<j\leq s$, $\mathcal{F}\in\{\mathcal{A},\mathcal{B}\}$ and $E\in g(\mathcal{F})$, one has either $S_{ij}(E)\in g(\mathcal{F})$ or $F\subset S_{ij}(E)$ for some $F\in g(\mathcal{F})$.
    \item For $t\leq i\leq k$, $g_{i}^{*}(\mathcal{A})\neq\emptyset$ if and only if $g_{s + t - i}^{*}(\mathcal{B})\neq\emptyset$. Furthermore, for each $E\in g_{i}^{*}(\mathcal{A})$, there exists $F\in g_{s + t - i}^{*}(\mathcal{B})$ such that $|E\cap F| = t$ and $E\cup F=[s]$.
    \item If $g_{i}^{*}(\mathcal{A})\neq\emptyset$, then $\mathcal{A}_{1}=\mathcal{A}\cup \mathcal{D}(g_{i}^{*}(\mathcal{A})')\subseteq \binom{[n]}{k}$ and $\mathcal{B}_{1}=\mathcal{B}\setminus \mathcal{D}(g_{s + t - i}^{*}(\mathcal{B}))\subseteq \binom{[m]}{l}$ are also cross-$t$-intersecting families with 
    \[|\mathcal{A}_{1}|=|\mathcal{A}|+|g_{i}^{*}(\mathcal{A})|\binom{n - s}{k - i+1}\quad\text{and}\quad|\mathcal{B}_{1}|=|\mathcal{B}|-|g_{s + t - i}^{*}(\mathcal{B})|\binom{m-s}{l + i - s - t}.\]
\end{enumerate}
\end{lemma}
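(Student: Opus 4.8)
The plan is to transplant the proof of Lemma \ref{ZW} (the uniform version of Zhang and Wu) to the bipartite setting, in which $\mathcal{A}$ is governed by the pair $(n,k)$ and $\mathcal{B}$ by $(m,l)$. The only genuinely new feature is that the two families live over different ground sets with different uniformities, so throughout I would keep separate track of which parameters belong to $\mathcal{A}$ and which to $\mathcal{B}$. The foundation is the equivalence recorded just before the lemma: since $\min\{m,n\}>k+l-t$, the pair $\mathcal{A},\mathcal{B}$ is cross-$t$-intersecting if and only if $|E\cap F|\geq t$ for every $E\in g(\mathcal{A})$ and $F\in g(\mathcal{B})$. I would also use at the outset that each minimal generating set is an antichain of generators contained in $[s]$, and that whenever $g_i^*(\mathcal{A})\neq\emptyset$ one has $s\leq n$ (there is a generator of size $i$ with largest element $s$), and symmetrically $s\leq m$ when some $g_j^*(\mathcal{B})\neq\emptyset$; this is what keeps the binomial counts in part (iii) meaningful.

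For part (i) I would argue directly from left-compressedness, not from any $t$-intersecting property of a single family (which need not hold here). Fix $\mathcal{F}\in\{\mathcal{A},\mathcal{B}\}$, $E\in g(\mathcal{F})$ and $1\leq i<j\leq s$. If $j\notin E$ then $S_{ij}(E)=E\in g(\mathcal{F})$ and we are done, so assume $j\in E$ and $i\notin E$. Because $\mathcal{F}=S_{ij}(\mathcal{F})$ is left-compressed, every uniform set containing $S_{ij}(E)$ already belongs to $\mathcal{F}$, so $S_{ij}(E)\in\mathcal{U}(g(\mathcal{F}))$; hence some $F\in g(\mathcal{F})$ satisfies $F\subseteq S_{ij}(E)$, which is the claim (with equality or proper containment). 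This argument is word-for-word the same in $[n]$ and in $[m]$.

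Part (ii) is the crux and the step I expect to be the main obstacle. Fix $E\in g_i^*(\mathcal{A})$, so $s\in E\subseteq[s]$ and $|E|=i$. First I would produce a generator $F\in g(\mathcal{B})$ with $s\in F$ and $|E\cap F|=t$. If no such $F$ existed, then $|(E\setminus\{s\})\cap F|\geq t$ for every $F\in g(\mathcal{B})$, so each $k$-set whose trace on $[s]$ equals $E\setminus\{s\}$ would cross-$t$-intersect all of $\mathcal{B}$; since $g(\mathcal{A})$ is an antichain, none of these $k$-sets lies in $\mathcal{A}$, and adjoining them enlarges $\mathcal{A}$, contradicting maximality. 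Hence such an $F$ exists. I would then show $E\cup F=[s]$ by a clean generator-level contradiction: if some $p\in[s]\setminus(E\cup F)$ had $p<s$, then by part (i) the shift $S_{ps}(E)=(E\setminus\{s\})\cup\{p\}$ would contain a generator $E'\in g(\mathcal{A})$, and since $p\notin F$ we would get $|E'\cap F|\leq|S_{ps}(E)\cap F|=|(E\cap F)\setminus\{s\}|=t-1$, contradicting that the generating sets cross-$t$-intersect. Thus $E\cup F=[s]$, whence $|E|+|F|-|E\cap F|=s$ forces $|F|=s+t-i$, i.e. $F\in g_{s+t-i}^*(\mathcal{B})$. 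The reverse implication $g_{s+t-i}^*(\mathcal{B})\neq\emptyset\Rightarrow g_i^*(\mathcal{A})\neq\emptyset$ follows by interchanging the roles of $\mathcal{A}$ and $\mathcal{B}$ (and of $(n,k)$ with $(m,l)$), using $s+t-(s+t-i)=i$. The care needed here is in not conflating the two ground sets and in checking that the maximality argument survives when $k\neq l$.

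Finally, part (iii) combines (ii) with a counting verification. The sets added to form $\mathcal{A}_1$ are precisely the $k$-sets with trace $E\setminus\{s\}$ on $[s]$, taken over $E\in g_i^*(\mathcal{A})$; these are pairwise distinct, disjoint from $\mathcal{A}$ (no generator of the antichain lies below $E\setminus\{s\}$), and number $\binom{n-s}{k-i+1}$ for each $E$, giving $|\mathcal{A}_1|=|\mathcal{A}|+|g_i^*(\mathcal{A})|\binom{n-s}{k-i+1}$; dually $\mathcal{D}(g_{s+t-i}^*(\mathcal{B}))$ removes $|g_{s+t-i}^*(\mathcal{B})|\binom{m-s}{l+i-s-t}$ members of $\mathcal{B}$. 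To see that $\mathcal{A}_1,\mathcal{B}_1$ remain cross-$t$-intersecting I would only need to test a new set $A$ (trace $E\setminus\{s\}$) against $B\in\mathcal{B}_1$: writing $F''=B\cap[s]$ and choosing a generator $F'\subseteq F''$ of $\mathcal{B}$, either $|(E\setminus\{s\})\cap F'|\geq t$ and we are done, or $s\in F'$ with $|E\cap F'|=t$, in which case part (ii) gives $E\cup F'=[s]$ so $F'\in g_{s+t-i}^*(\mathcal{B})$; then $F''=F'$ would place $B$ in the deleted family, while $F''\supsetneq F'$ supplies an element of $E\setminus\{s\}$ lying in $F''$ that restores $|(E\setminus\{s\})\cap F''|\geq t$. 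This establishes all three statements.
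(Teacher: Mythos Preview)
Your outline matches the paper's proof almost step for step: part (i) from left-compressedness (the paper simply cites Lemma~\ref{AKGS}(ii)); part (ii) by using maximality to produce a partner generator $F$ with $|E\cap F|=t$ and $s\in F$, then the shift argument from (i) to force $E\cup F=[s]$; part (iii) by checking that $g_i^*(\mathcal{A})'$ cross-$t$-intersects $g(\mathcal{B})\setminus g_{s+t-i}^*(\mathcal{B})$ and reading off the sizes. The paper carries out (iii) purely at the generator level rather than testing individual $k$-sets against $l$-sets, but the two verifications are equivalent.

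The one substantive omission is that you never invoke the \emph{minimality of $s$}, whereas the paper opens the proof of (ii) with exactly this: if no pair $A\in g(\mathcal{A}),\,B\in g(\mathcal{B})$ satisfied $|A\cap B|=t$ with $s\in A\cap B$, then $\{A\setminus\{s\}\}$ and $\{B\setminus\{s\}\}$ would still be cross-$t$-intersecting generating sets (by maximality) with strictly smaller $\max s^+$, contradicting the choice of $s$. This preliminary step gives $s\le k+l-t<\min\{m,n\}$, which is what makes your ``adjoin the $k$-sets with trace $E\setminus\{s\}$ on $[s]$'' step non-vacuous; without it one has only $s\le n$, and it is not clear that any such $k$-set exists. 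More conceptually, the minimality hypothesis is doing real work here (the statement of (ii) can fail for an arbitrary antichain generating set), so your proof should use it somewhere; inserting the paper's two-line preliminary argument is the cleanest fix.
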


\begin{proof}
Conclusion (i) follows from the second statement of Lemma \ref{AKGS}. 

For conclusion (ii), we first show that there exist some sets $A\in g(\mathcal{A})$ and $B\in g(\mathcal{B})$ such that \(|A\cap B| = t\) with \(s\in A\cap B\). Otherwise, the collections \(\{A \setminus \{s\}:A \in g(\mathcal{A})\}\) and $\{B\setminus \{s\}: B\in g(\mathcal{B})\}$ are cross-$t$-intersecting. Combining the assumption that $\mathcal{A}$ and $\mathcal{B}$ are maximal, we have $\{A\setminus \{s\}:A\in g(\mathcal{A})\}\in G_{*}(\mathcal{A})$ and $\{B\setminus \{s\}:B\in g(\mathcal{B})\}\in G_{*}(\mathcal{B})$, contradicting the minimality of $s$. Furthermore, $A\cup B=[s]$. Otherwise, if there exists an element $x\in[s]\setminus (A\cup B)$, then by (i), there exists some $A_{1}\in g(\mathcal{A})$ such that $A_{1}\subset A\cup\{x\}\setminus \{s\}$. However, this would imply $|A_{1}\cap B|\leq t - 1$, contradicting the fact that $g(\mathcal{A})$ and $g(\mathcal{B})$ are cross-$t$-intersecting. Therefore, $s\leq k+l-t$. Suppose that $g_i^{*}(\mathcal {A})\neq \emptyset$ and let $E\in g_{i}^{*}(\mathcal{A})$ and define $E' = E\setminus \{s\}$, we observe that $\mathcal{D}(E)\subsetneq \mathcal{D}(E')$. Combining this with the assumption that $g(\mathcal{A})\in G_{*}(\mathcal{A})$, we conclude that $\mathcal{A}\subsetneq\mathcal{A}\cup \mathcal{D}(E')$. If $|E'\cap F|\geq t$ for all $F\in g(\mathcal{B})$, then $\mathcal{A}_{1}=\mathcal{A}\cup \mathcal{D}(E')$ and $\mathcal{B}$ are also cross-$t$-intersecting with $\mathcal{A}\subsetneq\mathcal{A}_{1}$, contradicting the maximality of $\mathcal{A}$ and $\mathcal{B}$. Therefore, there must exist some $F_{0}\in g(\mathcal{B})$ such that $|E'\cap F_{0}|\leq t-1$. However, since $|E\cap F_{0}|\geq t$, it follows that $|E\cap F_{0}|=t$ and $s\in E\cap F_{0}$. Similarly, we have $E\cup F_{0}=[s]$. Thus, $|F_{0}|=s+t-i$ and $g_{s+t-i}^{*}(\mathcal{B})\neq\emptyset$. Similarly, if $g_{s+t-i}^*(\mathcal{B})\neq \emptyset$, then $g_i^{*}(\mathcal {A})\neq \emptyset$. Hence, the second conclusion holds.

For conclusion (iii), we only prove the first part, as the latter part follows from (v) of Lemma \ref{AKGS}. It suffices to prove that $g_{i}^{*}(\mathcal{A})'$ and $g(\mathcal{B})\setminus g_{s + t - i}^{*}(\mathcal{B})$ are cross $t$-intersecting. Let $A\in g_{i}^{*}(\mathcal{A})'$ and $B\in g(\mathcal{B})\setminus g_{s + t - i}^{*}(\mathcal{B})$. If $s\notin B$, then $|B\cap A|=|B\cap(A\cup\{s\})|\geq t$ since $g(\mathcal{A})$ and $g(\mathcal{B})$ are cross-$t$-intersecting, and $A\cup\{s\}\in g(\mathcal{A})$. Otherwise, $s\in B$. Suppose on the contrary that $|A\cap B|\leq t - 1$. Then $|(A\cup\{s\})\cap B| = t$. A proof process similar to that of (ii) shows that $(A\cup\{s\})\cup B=[s]$. It follows that $B\in g_{s + t - i}^{*}(\mathcal{B})$, which contradicts the assumption that $B\in g(\mathcal{B})\setminus g_{s + t - i}^{*}(\mathcal{B})$. Thus, $|A\cap B|\geq t$. This completes the proof. \qed
\end{proof}

In the following, we introduce a result that will be needed in Section 4.

%\begin{theorem}[Wang and Zhang \cite{WZ2013}]\label{WZ} 
%Let $n$, $k$ and $t$ be positive integers with $n>2k - t$. If $\mathcal{A}\subseteq\binom{[n]}{k}$ and $\mathcal{B}\subseteq\binom{[n]}{k}$ are non-empty and cross-$t$-intersecting, then 
%\[
%|\mathcal{A}|+|\mathcal{B}|\leq\binom{n}{k}-\sum_{0\leq i\leq t - 1}\binom{k}{i}\binom{n - k}{k - i}+1,
%\]
%and the equality holds if and only if one of the following holds:
%\[
%\begin{array}{l}
%\vspace{0.2cm} (1)\ {\rm up\ to\ isomorphism,\ either}\ \mathcal{A}=[k]\ {\rm and}\ \mathcal{B}=\left\{B\in\binom{[n]}{k}:|B\cap[k]|\geq t\right\}\ {\rm or}\ \mathcal{B}=[k]\ {\rm and}\\   \vspace{0.2cm} \mathcal{A}=\left\{A\in\binom{[n]}{k}:|A\cap[k]|\geq t\right\}. \\
%\vspace{0.2cm} (2)\ (n,k,t)=(k + 2,k,k - 1),\ {\rm and\ up\ to\ isomorphism,}\ \mathcal{A}=\mathcal{B}=\binom{[k + 1]}{k}.
%\end{array}
%\]
%\end{theorem}

\begin{theorem}[Sperner \cite{S1928}]\label{NMP} 
Let $n$ and $j$ be two positive integers with $1\leq j\leq n - 1$. For any non-empty $\mathcal{F}\subseteq\binom{[n]}{j}$, 
\[
\frac{|\nabla(\mathcal{F})|}{|\mathcal{F}|}\geq\frac{\binom{n}{j + 1}}{\binom{n}{j}},
\]
where $\nabla(\mathcal{F})=\left\{B\in\binom{[n]}{j+1}:A\subset B\text{ for some }A\in\mathcal{F}\right\}$.
\end{theorem}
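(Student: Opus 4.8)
The plan is to prove this by double counting the covering relations between levels $j$ and $j+1$ of the Boolean lattice; this is an instance of the local LYM (normalized matching) property. Let $P$ denote the set of all pairs $(A,B)$ with $A\in\mathcal{F}$, $B\in\binom{[n]}{j+1}$ and $A\subset B$. I would estimate $|P|$ in two ways. Counting from below, each $A\in\mathcal{F}$ is contained in exactly $n-j$ sets $B$ of size $j+1$, obtained by adjoining one of the $n-j$ elements of $[n]\setminus A$, and every such $B$ belongs to $\nabla(\mathcal{F})$ by the definition of the upper shadow. Hence
\[
|P| = (n-j)\,|\mathcal{F}|.
\]

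Counting from above, every pair in $P$ has its second coordinate in $\nabla(\mathcal{F})$, and each fixed $B\in\nabla(\mathcal{F})$ has exactly $j+1$ subsets of size $j$, so at most $j+1$ of them can lie in $\mathcal{F}$. Therefore
\[
|P| \leq (j+1)\,|\nabla(\mathcal{F})|.
\]
Combining the two estimates yields $(n-j)\,|\mathcal{F}|\leq (j+1)\,|\nabla(\mathcal{F})|$, that is, $|\nabla(\mathcal{F})|/|\mathcal{F}|\geq (n-j)/(j+1)$. The proof then finishes by noting the elementary identity
\[
\frac{\binom{n}{j+1}}{\binom{n}{j}} = \frac{j!\,(n-j)!}{(j+1)!\,(n-j-1)!} = \frac{n-j}{j+1},
\]
which identifies the right-hand side with the claimed bound.

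I do not expect a genuine obstacle here, as the argument is entirely elementary. The only points that require care are keeping the two directions of the count straight: the lower estimate is an \emph{exact} equality because the entire upper shadow of each $A\in\mathcal{F}$ is available and automatically lies in $\nabla(\mathcal{F})$, whereas the upper estimate is only an \emph{inequality}, since for a given $B\in\nabla(\mathcal{F})$ not all of its $j+1$ subsets of size $j$ need belong to $\mathcal{F}$. Verifying the binomial ratio identity and confirming that the hypothesis $1\leq j\leq n-1$ guarantees $n-j\geq 1$ and $j+1\leq n$ (so that both levels and all incidences are well defined) completes the routine bookkeeping.
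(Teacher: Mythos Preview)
Your argument is correct: the double-counting of covering pairs between levels $j$ and $j+1$ is exactly the standard proof of the local LYM (normalized matching) inequality, and your bookkeeping is sound. Note that the paper itself does not supply a proof of this statement; it is quoted as a classical result of Sperner and used as a black box in Section~4, so there is no ``paper's proof'' to compare against---your elementary derivation simply fills in what the paper takes for granted.
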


\section{Proofs of some basic inequalities}

In this section, we prove technical results, Lemmas \ref{IM1}-\ref{IM3}, which are crucial to our proof of Theorem \ref{TM}. 

For positive integers \(n, s, i\) and \(j\), we define
\begin{align*}
& S(n, s, i, j) = s(n-s+1)-i(j-i), \\ 
& T(n, s, i, j) = i(n-j-s+i+1)+(s-i)(j-i+1). 
\end{align*}	
For \(t+2\leq i\leq s-2\), we  establish the following inequality.
\begin{lemma}\label{IM1}
Let $n, m, k, l, s, t$ and \(i\) be positive integers with \(\min\{m,n\}\geq (t+1)(k-t+1)\), \(k\geq l> t\geq 3\) and \(i+2\leq s \leq i+k-t\). If \(t+2 \leq i\leq k\), then
\begin{align*}
\frac{(m-l+t-i)(n-s-k+i)S(n,s,i,k)S(m,s,s+t-i,l)}{(m-s+1)(n-s+1)T(n,s,i,k)T(m,s,s+t-i,l)}>1,
\end{align*}
where $S(n,s,i,k)$ and $T(n,s,i,j)$ are defined as above.
\end{lemma}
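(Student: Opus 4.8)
The plan is to read the left-hand side as a product of an $\mathcal{A}$-factor, built from $(n,k,i)$, and a $\mathcal{B}$-factor, built from $(m,l,s+t-i)$, and to exploit the symmetry of the whole quotient under the exchange $(n,k,i)\leftrightarrow(m,l,s+t-i)$, which sends $n-s-k+i\mapsto m-l+t-i$, $S(n,s,i,k)\mapsto S(m,s,s+t-i,l)$, and so on. Write $S_1=S(n,s,i,k)$, $T_1=T(n,s,i,k)$, $S_2=S(m,s,s+t-i,l)$, $T_2=T(m,s,s+t-i,l)$, and set
\[
a=s-i,\quad b=i-t,\quad p=k-i+1,\quad q=l-(s+t-i)+1,\quad Q=n-s-k+i,\quad P=m-l+t-i,
\]
together with $\nu=n-s+1$ and $\mu=m-s+1$. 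The hypotheses give $a,b\ge 2$ and $p\ge 1$, and a short computation from $\min\{m,n\}\ge(t+1)(k-t+1)$ yields $Q,P>0$, $T_1,T_2>0$, $\nu=Q+p$ and $\mu=P+q$. Since all denominators are positive, the assertion is equivalent to $\Delta:=PQ\,S_1S_2-\mu\nu\,T_1T_2>0$.

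First I would record the identity, checked by direct expansion,
\[
S(n,s,i,k)-T(n,s,i,k)=(s-i)(n-s-k+i)=aQ,
\]
and its mirror image $S_2-T_2=bP$. Substituting $S_1=T_1+aQ$ and $S_2=T_2+bP$ into $\Delta$ and cancelling, one reaches the symmetric decomposition
\[
\Delta=\Phi\Psi+\mu T_2\,\Phi+\nu T_1\,\Psi,\qquad \Phi:=aQ^{2}-pT_1,\quad \Psi:=bP^{2}-qT_2 .
\]
Here $\Phi>0$ says precisely that the $\mathcal{A}$-factor exceeds $1$, and $\Psi>0$ is its analogue for the $\mathcal{B}$-factor. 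If $\Phi\ge 0$ and $\Psi\ge 0$ then each summand is nonnegative, and a brief check that they do not vanish simultaneously finishes this case.

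The heart of the argument is the mixed case, where by symmetry I may assume $\Phi<0$. A sign analysis of $\Phi$ shows that this can occur only when $a=s-i$ is small relative to $k-t$, which forces the complementary quantity $s+t-i=t+a$ to be small and hence makes $\Psi$ large and positive; applied symmetrically, this also rules out $\Phi$ and $\Psi$ being negative together, so the two factors are never simultaneously below $1$. Having secured $\Psi>0$, I rewrite $\Delta=\nu T_1\,\Psi+\Phi(\mu T_2+\Psi)$ and use the crude bound $-\Phi<pT_1$ together with $\nu=Q+p$ to obtain
\[
\Delta>T_1\bigl(Q\,\Psi-p\,\mu\,T_2\bigr).
\]
Thus it suffices to prove $Q\Psi\ge p\mu T_2$. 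The left-hand side is linear and increasing in $n$ (recall $\Psi>0$) while the right-hand side is independent of $n$, so the inequality need only be verified at $n=(t+1)(k-t+1)$, after which the remaining dependence on $m$ and $l$ is controlled by a direct estimate in which the smallness of $a$ forced by $\Phi<0$ is fed into the lower bound for $\Psi$.

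I expect this last estimate to be the main obstacle: making quantitative the implication that $\Phi<0$ forces $\Psi$ to be large enough to give $Q\Psi\ge p\mu T_2$. The trouble is that $T(m,s,s+t-i,l)$ is largest at interior values of its index, so it cannot simply be bounded at an endpoint; the argument must weigh the genuine anti-correlation between the two factors --- a small $s-i$ depresses the $\mathcal{A}$-factor but inflates the $\mathcal{B}$-factor --- against the loss incurred in the crude step $-\Phi<pT_1$, and then settle the resulting polynomial inequality in $a,b,t,k,l$ at the extreme value $\min\{m,n\}=(t+1)(k-t+1)$.
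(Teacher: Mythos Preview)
Your decomposition $\Delta=\Phi\Psi+\mu T_2\Phi+\nu T_1\Psi$ is correct, and the case split is reasonable, but the proof is genuinely incomplete: you explicitly leave the ``main obstacle'' --- the inequality $Q\Psi\ge p\mu T_2$ in the mixed case $\Phi<0$, $\Psi>0$ --- unresolved. This case does occur (e.g.\ $t=3$, $k=l=100$, $i=50$, $s=52$, $m=n=392$ gives $\Phi<0$), so it cannot be sidestepped. Your heuristic chain ``$\Phi<0\Rightarrow a$ small $\Rightarrow s+t-i=t+a$ small $\Rightarrow\Psi$ large'' is also not yet an argument: smaller $s+t-i$ \emph{increases} $q$, which pulls $\Psi$ downward, so the anti-correlation has to be quantified carefully, and you have not done so. The claim that $\Phi,\Psi$ cannot both be negative rests on the same unproved heuristic.

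The paper avoids this obstacle entirely by choosing a different factorisation. Instead of your ``uncrossed'' split $\dfrac{QS_1}{\nu T_1}\cdot\dfrac{PS_2}{\mu T_2}$ (where, as above, one factor can drop below $1$), it uses the \emph{crossed} split
\[
\frac{QS_2}{\mu T_1}\cdot\frac{PS_1}{\nu T_2}
\]
and proves each factor exceeds $1$ separately, i.e.\ it shows
\[
h:=QS_2-\mu T_1>0\qquad\text{and}\qquad H:=PS_1-\nu T_2>0.
\]
In the numerical example above one gets $QS_1/(\nu T_1)\approx 0.88$ but $QS_2/(\mu T_1)\approx 1.002$ and $PS_1/(\nu T_2)\approx 1.91$: mixing one $(n,k)$-quantity with one $(m,l)$-quantity in each factor balances them so that both stay above $1$, and no case analysis is needed. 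The proof of $h>0$ and $H>0$ then proceeds cleanly: both are shown to be decreasing in $l$, reducing to $l=k$; at $l=k$ the exchange $(n,i)\leftrightarrow(m,s+t-i)$ swaps $h$ and $H$, permitting the restriction $i\le(s+t)/2$; both are concave in $s$, so one only checks the endpoints $s\in\{2i-t,\,i+k-t\}$; and those three endpoint inequalities are verified by direct polynomial estimates using $\min\{m,n\}\ge(t+1)(k-t+1)$.
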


\begin{proof} 
Set
\[
h(m,n,s,k,l,i,t)=(n-s-k+i)S(m,s,s+t-i,l)-(m-s+1)T(n,s,i,k)
\]
and
\[
H(m,n,s,k,l,i,t)=(m+t-l-i)S(n,s,i,k)-(n-s+1)T(m,s,s+t-i,l).
\]
It suffices to show that \(h(m,n,s,k,l,i,t)>0\) and \(H(m,n,s,k,l,i,t)>0\). 

Since \(n\geq(t+1)(k-t+1)\) and \(s> i\), we have 
\begin{align*}
\frac{\partial h}{\partial l}(m, n, s, k, l, i, t)&=-(n-s-k+i)(s+t-i)\\
&\leq -(s+t-i)\big((t+1)(k-t+1)-s-k+i\big)\\
&=-(s+t-i)\big((t-1)(k-t)+(i+k-s-t)+1\big)\\
&<0,
\end{align*}
where the last inequality holds due to the condition \(i+k\geq s+t\). 
Thus, \(h(m, n, s, k, l, i, t)\) is a decreasing function in $l$. It follows that 
\[h(m,n,s,k,l,i,t)\geq h(m,n,s,k,k,i,t).\]
Moreover, since \(n\geq(t+1)(k-t+1)\), \(k\geq i\geq t+2\) and \(i+k\geq s+t\), it follows that 
\begin{equation}\label{3-0}
\begin{aligned}
\min\{m,n\}-s+1 &\geq \min\{m,n\}-k-i+t+1 \\
&\geq (t+1)(k-t+1)-k-i+t+1\\
&=(t-1)(k-t)+(k-i)+2\\
&>0.
\end{aligned}
\end{equation}
This implies that 
\begin{align*}
\frac{\partial H}{\partial l}(m, n, s, k, l, i, t)&=-2(i-t)(n-s+1)+(k-i)i\\
&< -2(i-t)\big((t-1)(k-t)+(k-i)\big)+(k-i)i\\
&= (t-1)(k-i)-2(i-t)(t-1)(k-t)-(k-i)(i-t-1)\\
&< (t-1)(k-t)-2(i-t)(t-1)(k-t)-(k-i)(i-t-1)\\
&= -(2i-2t-1)(t-1)(k-t)-(k-i)(i-t-1)\\
&<0,
\end{align*}
where the last inequality holds since \(i\geq t+2.\) Thus, \(H(m, n, s, k, l, i, t)\) is also a decreasing function in $l$. It follows that 
\[H(m,n,s,k,l,i,t)\geq H(m,n,s,k,k,i,t).\]
Therefore, to complete the proof, it suffices to show that
\begin{equation}\label{3-01}
\begin{aligned}
h(m, n, s, k, k, i, t)>0\ {\rm and}\ H(m, n, s, k, k, i, t)>0,
\end{aligned}
\end{equation}
under the assumptions that \(\min\{m, n\} \geq(t+1)(k-t+1)\) and \(\max\{t+2, s+t-k\} \leq i \leq \max\{s-2, k\}\). 

Since \(k> t\geq 3\) and \( m\geq (t+1)(k-t+1)\), we have  
\begin{align*}
\frac{{\partial}^2 h}{\partial s^2}(m, n, s, k, k, i, t)&=-2(m-2k+2t)\\
&\leq -2\big((t+1)(k-t+1)-2k+2t\big)\\
&= -2(t-1)(k-t-1)-4t\\
&<0. 
\end{align*}
It follows that $h(m, n, s, k, k, i, t)$ is a concave function in $s$. Similarly, since 
\begin{align*}
\frac{{\partial}^2 H}{\partial s^2}(m, n, s, k, k, i, t)=-2(i-t-1)<0, 
\end{align*}
$H(m, n, s, k, k, i, t)$ is also concave in $s$. 

If $i\leq \frac{s+t}{2}$, then $2i-t\leq s\leq i+k-t$ by assumption. Since both functions are concave in \(s\) over the interval \([2i-t, i+k-t]\), their minimum values must occur at the endpoints. Note that  \(h(n,m,2i-t,k,k,i,t)=H(m,n,2i-t,k,k,i,t)\). Consequently, to ensure that they are positive over the entire interval, it suffices to verify:
\begin{equation}\label{h>0,H>0}
\left \{	\begin{array}{l}
 h(m, n, k+i-t, k, k, i, t)>0, \\ H(m, n, k+i-t, k, k, i, t)>0,\ {\rm and}\\ h(m, n, 2i-t, k, k, i, t)>0.
	\end{array}
\right .	
\end{equation}
If $i>\frac{s+t}{2}$, then $\frac{s+t}{2}< i\leq \min\{k,s-2\}$. Since \(h(n,m,s,k,k,s+t-i,t)=H(m,n,s,k,k,i,t)\), the inequality 
\[h(m, n, s, k, k, i, t)>0\ {\rm for}\ \min\{m, n\} \geq(t+1)(k-t+1),\  {\rm and}\ \frac{s+t}{2} \leq i \leq \min\{k, s-2\}\]
is equivalent to
\[H(n, m, s, k, k, i, t)>0\ {\rm for}\ \min\{m, n\} \geq(t+1)(k-t+1),\  {\rm and}\ \max\{t+2, s+t-k\} \leq i \leq \frac{s+t}{2}.\]  Similarly, the inequality
\[H(m, n, s, k, k, i, t)>0\ {\rm for}\ \min\{m, n\} \geq(t+1)(k-t+1),\  {\rm and}\ \frac{s+t}{2} \leq i \leq \min\{k, s-2\}\]
is equivalent to
\[h(n, m, s, k, k, i, t)>0\ {\rm for}\ \min\{m, n\} \geq(t+1)(k-t+1),\  {\rm and}\ \max\{t+2, s+t-k\} \leq i \leq \frac{s+t}{2}.\]
Therefore, to ensure that they are positive over the entire interval, it also suffices to verify (\ref{h>0,H>0}).

Firstly, we show that \[h(m, n, k+i-t, k, k, i, t)>0.\]
Since \(k\geq i\geq t+2\), we have \(2(k-t-1)\geq (k-t).\) Combining this with the assumption that \( m\geq (t+1)(k-t+1)\), we obtain 
%\(m-k-i+t+1\geq (t-1)(k-t)+(k-i)+2>0\). 
\begin{equation}\label{3-1}
\begin{aligned}
& \min\{m,n\}-2i+t+1 \\
& = \min\{m,n\}-k-i+t+1+(k-i)\\
& = \min\{m,n\}-2k+t+1+2(k-i)\\
& \geq (t+1)(k-t+1)-2k+t+1+2(k-i)\\
& = (t-1)(k-t)+2(k-i)+2 \\
& = (t+1)(k-i)+(t-1)(i-t)+2 \\
&\geq t(k-i+2)+(k-i).
\end{aligned}
\end{equation}
Then we have \(\min\{m,n\}-k-i+t+1\geq t(k-i+2) >0\), and this implies that 
\begin{align*}
\vspace{0.1cm}&h(m, n, k+i-t, k, k, i, t)\\
\vspace{0.1cm}=&(n-2k+t)(k+i-t)(m-k-i+t+1)\\
\vspace{0.1cm}&-(m-k-i+t+1)\big(i(n-2k+t+1)+(k-t)(k-i+1)\big)\\
\vspace{0.1cm}=&(m-k-i+t+1)\big((k-t)(n-3k+t-1)+(k-t-1)i\big) \\
\vspace{0.1cm}\geq &(m-k-i+t+1)\big((k-t)((t+1)(k-t+1)-3k+t-1)+(k-t-1)i\big) \\
%\vspace{0.1cm}=&(m-k-i+t+1)\big((k-t)((n-(t+1)(k-t+1))-3k+t-1)+(k-t-1)i\big) \\
%\vspace{0.1cm}=&(m-k-i+t+1)\big((k-t)((n-(t+1)(k-t+1))+(t-2)(k-t-1))\\
%\vspace{0.1cm}&+((k-t-1)i-2(k-t))\big) \\
\vspace{0.1cm}=&(m-k-i+t+1)\big((k-t)(t-2)(k-t-1)+(k-t-1)i-2(k-t)\big) \\
>&0,
\end{align*} 
where the last inequality holds since \(i\geq 5\) and  \((k-t-1)i-2(k-t)\geq 4(k-t-1)-2(k-t)>0.\)

Secondly, we show that \[H(m, n, k+i-t, k, k, i, t)>0.\] 
%From the conditions \( \min\{m,n\} \geq (t+1)(k-t+1)\) and \(i\geq t+2\), we have
%\begin{equation}\label{3-1}
%\min\{m,n\}-k-i+t+1\geq (t-1)(k-t)+(k-i)+2=t(k-i)+(t-1)(i-t)+2\geq t(k-i+2).
%\end{equation}
Given that \(i\geq t+2\) and \(t\geq 3\), we obtain 
\begin{equation}\label{3-2} 
\begin{aligned}
\vspace{0.1cm}&t(k-i+2)(i-t)-i(k-i)-2(i-t)-k\\
\vspace{0.1cm}=&(k-i)((t-1)(i-t-1)-2)+(2t-3)(i-t)-t\\
\vspace{0.1cm}\geq & (k-i)((t-1)(i-t-1)-2)+3(t-2)\\
>&0.
\end{aligned}
\end{equation}
This further implies
\begin{align*}
\vspace{0.1cm}&H(m, n, k+i-t, k, k, i, t)\\
\vspace{0.1cm}=&(n-k-i+t+1)\big((m-k-i+t-1)(i-t)-k\big)-(m-k-i+t)i(k-i)\\
\vspace{0.1cm}\geq & t(k-i+2)\big((m-k-i+t-1)(i-t)-k\big)-(m-k-i+t)i(k-i)\\
\vspace{0.1cm}=& \big(t(k-i+2)(i-t)-i(k-i)\big)(m-k-i+t-1)-kt(k-i+2)-i(k-i)\\
\vspace{0.1cm}\geq & \big(t(k-i+2)(i-t)-i(k-i)\big)(t(k-i+2)-2)-kt(k-i+2)-i(k-i)\\
\vspace{0.1cm}= & t(k-i+2)\big(t(k-i+2)(i-t)-i(k-i)-2(i-t)-k\big)+i(k-i)\\
>&0, 
\end{align*} 
where the first inequality follows from (\ref{3-1}) and 
\begin{align*}
\vspace{0.1cm}(m-k-i+t-1)(i-t)-k \geq & 2(m-k-i+t-1)-k\\
\vspace{0.1cm}\geq & 2(t+1)(k-t+1)-5k +2t -2\\
\vspace{0.1cm}= & (2t-3)(k-t-1)+ (t-3)\\
\geq&0, 
\end{align*}
 the second inequality follows from (\ref{3-1}) and (\ref{3-2}), and the last inequality follow from (\ref{3-2}).  

Finally, we show that \[h(m, n, 2i-t, k, k, i, t)>0.\] 
%Given that \(k\geq i\geq t+2\) and \( \min\{n, m\}\geq (t+1)(k-t+1)\), it follows that
%\begin{equation}\label{3-3}  
%n-2i+t+1\geq n-2k+t+1\geq (t+1)(k-t+1)-2k+t+1= (t-1)(k-t)+2>0.
%\end{equation}
Since \(k\geq i\geq t+2\geq 5\) and \( n\geq (t+1)(k-t+1)\), we have 
\begin{equation}\label{3-4}
\begin{aligned} 
(i-t)(n-2k+t-2)-t &\geq (i-t)((t-1)(k-t)-1)-t\\
& \geq  (i-t)((t-1)(i-t)-1)-t\\
& \geq 2(2t-3)-t\\
& =3(t-2)\\
&>0.
\end{aligned}
\end{equation}
If \(k=i\), by (\ref{3-1}) and (\ref{3-4}), we have
\[h(m, n, 2i-t, k, k, i, t)= (m-2i+t+1)\big((i-t)(n-2k+t-2)-t\big)> 0. \]
If \(k\geq i+1\), then we have 
\begin{equation}\label{3-5} 
\begin{aligned}
\vspace{0.1cm}&(it-t^2-t)\big((t-1)(k-t)-1\big)-i(k-i+2)-(t+1)t+\big((t-1)(i-t)+2 \big)(i-t)(t-1)\\
\vspace{0.1cm}&=(k-i)\big((it-t^2-t)(t-1)-i\big)-i(t+2)+(2it-2t^2-i+2)(t-1)(i-t)\\
\vspace{0.1cm}&\geq (it-t^2-t)(t-1)-i(t+3)+2(2it-2t^2-i+2)(t-1)\\
\vspace{0.1cm}&=i(5t^2-8t-1)-(5t^2+t-4)(t-1)\\ 
\vspace{0.1cm}&\geq (t+2)(5t^2-8t-1)-(5t^2+t-4)(t-1)\\
\vspace{0.1cm}&= 6(t-3)(t+1)+12\\
&>0,
\end{aligned}
\end{equation}
where the first and second inequalities hold because of \(i\geq t+2\). 
%Clearly, it holds that 
%\begin{equation}\label{3-6}  
%(i-t)\big((t-1)(i-t)-1\big)-t\geq 2(2t-3)-t=3(t-2)>0.
%\end{equation}
Therefore, 
\begin{align*}
\vspace{0.1cm}&h(m, n, 2i-t, k, k, i, t)\\
\vspace{0.1cm}=&(m-2i+t+1)\big((i-t)(n-2k+t-2)-t\big)-i(k-i)(n-k-i+t)\\
\vspace{0.1cm}\geq & \big((t+1)(k-i)+(t-1)(i-t)+2 \big)\big((i-t)(n-2k+t-2)-t\big)-i(k-i)(n-k-i+t)\\
\vspace{0.1cm}= & \big(it-t^2-t \big)(k-i)(n-2k+t-2)+\big((t-1)(i-t)+2 \big)(i-t)(n-2k+t-2)\\
\vspace{0.1cm}&-\big((t+1)(k-i)+(t-1)(i-t)+2 \big)t-i(k-i)(k-i+2)\\
%\end{align*} 
%\begin{align*}
\vspace{0.1cm}\geq & \big(it-t^2-t \big)(k-i)\big((t-1)(k-t)-1\big)+\big((t-1)(i-t)+2 \big)(i-t)\big((t-1)(k-t)-1\big)\\
\vspace{0.1cm}&-\big((t+1)(k-i)+(t-1)(i-t)+2 \big)t-i(k-i)(k-i+2)\\
\vspace{0.1cm}= & (k-i)[(it-t^2-t)\big((t-1)(k-t)-1\big)-i(k-i+2)-(t+1)t+\big((t-1)(i-t)+2 \big)(i-t)(t-1)]\\
\vspace{0.1cm}&+\big((t-1)(i-t)+2 \big)[(i-t)\big((t-1)(i-t)-1\big)-t]\\
>&0,
\end{align*} 
where the first inequality follows from (\ref{3-1}) and (\ref{3-4}),
%and \[m-2i+t+1\geq (t+1)(k-t+1)-2i+t+1=(t+1)(k-i)+(t-1)(i-t)+2,\] 
%and
%\begin{align*}
%(i-t)(n-2k+t-2)-t &\geq 2((t+1)(k-t+1)-2k+t-2)-t=2(t-1)(k-t)-2-t\\
%&\geq 4(t-1)-2-t=3(t-2)\\
%&>0,
%\end{align*} 
the second inequality follows from (\ref{3-1}), 
%\[n-2k+t+1\geq (t+1)(k-t+1)-2k+t+1= (t-1)(k-t)+2,\]
 and the last inequality follows from (\ref{3-4}) and (\ref{3-5}). 
This completes the proof.  \qed
\end{proof}

For the case \(i=s-1\), we obtain the following inequality.
\begin{lemma}\label{IM2}
Let \(n, m, k, l, s, t\) be positive integers with \(\min\{m,n\}\geq (t+1)(k-t+1)\), \(k\geq l\geq t+1\geq 4\) and \(t+3\leq s \leq k+1\). Then
\begin{align*}
&(t+1)(k-s+2)(m-l+t-s+2)\\
&\hspace{3cm}-(s-1-t)\big((m-l+t-s+1)(n-k-1)-(k-s+2)(l-t)\big)<0.
\end{align*}
\end{lemma}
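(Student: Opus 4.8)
The plan is to view the left-hand side as a real-valued function $F(m,n,l,s)$ of its four arguments (with $k,t$ fixed) and to exploit that it is affine in each of $n$, $m$ and $l$ separately. Introduce the shorthands $a=k-s+2$, $b=s-1-t$, $c=l-t$ and $p=m-l+t-s+2$; then $a+b=k-t+1$, and the hypotheses $t+3\le s\le k+1$, $t+1\le l\le k$ give $a\ge 1$, $2\le b\le k-t$, $c\ge 1$. In this notation
\[
F=(t+1)\,a\,p-b(p-1)(n-k-1)+a\,b\,c,
\]
and the goal is to show $F<0$. The strategy is to use the affineness to drive $(m,n,l)$ to the corner $m=n=(t+1)(k-t+1)$, $l=k$, and then to finish with a one-variable convexity argument in $s$.

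First I would reduce in $n$. The coefficient of $n$ in $F$ is $-b(p-1)$, and since $p-1=m-l+t-s+1>0$ throughout the feasible range, $F$ is strictly decreasing in $n$; hence it suffices to take $n=(t+1)(k-t+1)$, i.e. $n-k-1=t(k-t)$. Next I would reduce in $m$: at this value of $n$ the coefficient of $m$ equals $(t+1)a-b\,t(k-t)$, and substituting $a=(k-t+1)-b$ together with $b\ge 2$ gives
\[
(t+1)a-b\,t(k-t)=(t+1)(k-t+1)-b\big(1+t(k-t+1)\big)\le -(t-1)(k-t+1)-2<0,
\]
so $F$ is strictly decreasing in $m$ and I may set $m=(t+1)(k-t+1)$ as well. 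Finally I would reduce in $l$: with $m=n=(t+1)(k-t+1)$ fixed, $\partial F/\partial l=a(b-t-1)+b\,t(k-t)$, which is bounded below by $2t(k-t)-(k-t-1)(t-1)=(t+1)(k-t)+(t-1)>0$; thus $F$ is increasing in $l$ and it suffices to take $l=k$. Chaining these three pointwise monotonicities reduces the problem to the single corner.

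After these reductions we are left with a single-variable problem. Substituting $m=n=(t+1)(k-t+1)$ and $l=k$ (so $c=k-t$ and $p=t(k-t)+2-b$) turns $F$ into a quadratic in $b$ whose leading coefficient $(t+1)+(t-1)(k-t)$ is positive, so $F$ is convex in $b$. A convex function on $[2,k-t]$ attains its maximum at an endpoint, so it remains only to verify $F<0$ at $b=2$ (that is, $s=t+3$) and at $b=k-t$ (that is, $s=k+1$). Writing $D=k-t\ge 2$, a short computation gives
\[
F\big|_{b=2}=D\big(2(D-1)-t(t-1)(D+1)\big)\quad\text{and}\quad F\big|_{b=D}=-t(t-1)D^3-(t-1)D^2+(t^2-1)D+2(t+1),
\]
and both are manifestly negative for $t\ge 3$ and $D\ge 2$, since the terms $t(t-1)(D+1)$ and $t(t-1)D^3$ dominate.

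The main obstacle is the reduction in $m$: unlike the $n$-direction, $F$ is not monotone in $m$ for free, and one must combine the lower bound $\min\{m,n\}\ge(t+1)(k-t+1)$ with the hypothesis $s\ge t+3$ (equivalently $b\ge 2$) to pin down the sign of the $m$-coefficient; the analogous sign check in the $l$-direction needs the same kind of estimate. Once the problem has been pushed to the corner, the convex one-variable verification in $s$ is routine, so the real content lies in organizing the three affine reductions so that the monotonicity directions are correct and the boundary values are exactly the ones that must be checked.
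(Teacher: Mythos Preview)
Your proof is correct and rests on the same two ingredients as the paper's: convexity of the expression in $s$ and monotonicity in the remaining variables coming from the bound $\min\{m,n\}\ge (t+1)(k-t+1)$. The paper applies them in the reverse order: it first computes $\partial^{2}g/\partial s^{2}=2(n-k-l+2t)>0$, reduces to the two endpoints $s=t+3$ and $s=k+1$, and then at each endpoint carries out monotonicity-type estimates in $m$ and $n$ separately while keeping $l$ general. Your ordering---push $(n,m,l)$ to the corner $\big((t+1)(k-t+1),(t+1)(k-t+1),k\big)$ first, then do the one-variable convex check in $b=s-1-t$---is a bit more economical; in particular, the reduction $l\to k$ is not singled out in the paper and it visibly shortens the endpoint computations. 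One small point worth a line of justification: the bound $\partial F/\partial l\ge 2t(k-t)-(t-1)(k-t-1)$ follows because $a(b-t-1)+bt(k-t)$ is increasing in $b$ on $[2,k-t]$ (its $b$-derivative is $(t+1)(k-t)+t+2-2b>0$ there), so the minimum occurs at $b=2$; the inequality itself is correct.
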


\begin{proof} Let
\begin{align*}
&g(m, n, k, l, s, t)=(t+1)(k-s+2)(m-l+t-s+2)\\
&\hspace{3cm}-(s-1-t)\big((m-l+t-s+1)(n-k-1)-(k-s+2)(l-t)\big).
\end{align*}
Given that \(n\geq (t+1)(k-t+1)\) and \(k\geq l\), we obtain 
\begin{align*}
\frac{{\partial}^2 g}{\partial s^2}=2(n-k-l+2t)\geq 2((t-1)(k-t-1)+2t)>0.
\end{align*}
This implies that \(g(m, n, k, l, s, t)\) is a convex function in \(s\). Therefore, it suffices to show that 
\[g(m, n, k, l, t+3, t)<0\ {\rm and}\ g(m, n, k, l, k+1, t)<0.\]

Firstly, we show that 
\[g(m, n, k, l, t+3, t)<0.\]
Since \(\min\{m,n\}\geq (t+1)(k-t+1)\), we have
\begin{equation}\label{3-7} 
\begin{aligned}
(t+1)(l-t-1)-2(\min\{m,n\}-l-1) & \leq(t+1)(k-t-1)-2(\min\{m,n\}-k-1) \\
& \leq (t+1)(k-t-1)-2((t+1)(k-t+1)-k-1)\\
&=-(t-1)(k-t+1)-2\\
&<0.
\end{aligned}
\end{equation}
This implies that 
\begin{align*}
&g(m, n, k, l, t+3, t)\\
&=(t+1)(k-t-1)(m-l-1)-2\big((m-l-2)(n-k-1)-(k-t-1)(l-t)\big)\\
&=\big((t+1)(k-t-1)-2(n-k-1)\big)(m-l-1)+2(k-t-1)(l-t)+2(n-k-1)\\
&\leq \big((t+1)(k-t-1)-2(n-k-1)\big)t(k-t)+2(k-t-1)(l-t)+2(n-k-1)\\
&=(t+1)(k-t-1)t(k-t)+2(k-t-1)(l-t)-2(t(k-t)-1)(n-k-1)\\
&\leq (t+1)(k-t-1)t(k-t)+2(k-t-1)(l-t)-2(t(k-t)-1)t(k-t)\\
&= \big((t+1)(k-t-1)-2t(k-t)\big)t(k-t)+2(k-t-1)(l-t)+2t(k-t)\\
&= -(t-1)(k-t+1)t(k-t)+2(k-t-1)(l-t)\\
&\leq -(k-t+1)(k-t)\big((t-1)t-2\big)\\
&<0,
\end{align*}
where the first and second inequalities follow from (\ref{3-7}) and 
\[ \min\{n, m\}-l-1\geq \min\{n, m\}-k-1\geq (t+1)(k-t+1)-k-1\geq t(k-t).\]

Finally, we show that 
\[g(m, n, k, l, k+1, t)<0.\]
Given that \(\min\{m,n\}\geq (t+1)(k-t+1)\) and \(k\geq t+2\), it follows that \\
\begin{equation}\label{3-8} 
\begin{aligned}
(t+1)-(\min\{m,n\}-l-1)(k-t) & \leq (t+1)-(\min\{m,n\}-k-1)(k-t) \\
& \leq (t+1)-(k-t)((t+1)(k-t+1)-k-1)\\
&= (t+1)-(k-t)^2t\\
&\leq (t+1)-2(tk-t^2)\\
&<-t(k-t).
\end{aligned}
\end{equation}
This implies that 
\begin{align*}
&g(m, n, k, l, k+1, t)\\
&=(t+1)(m-l+t-k+1)-(k-t)\big((m-l+t-k)(n-k-1)-(l-t)\big)\\
&=\big((t+1)-(n-k-1)(k-t)\big)(m-l+t-k)+(k-t)(l-t)+t+1\\
&\leq \big((t+1)-(n-k-1)(k-t)\big)((t-1)(k-t)+1)+(k-t)(l-t)+t+1\\
&\leq -t(k-t)((t-1)(k-t)+1)+(k-t)(k-t)+t+1\\
%\end{align*} 
%\begin{align*}
%&\leq -t(k-t)((t-1)(k-t)+1)+(k-t)(k-t)+t+1\\
&=-(k-t)^2(t(t-1)-1)-t(k-t-2)-(t-1)\\
&<0, 
\end{align*}
where the first and second inequalities follow from (\ref{3-1}), (\ref{3-8}) and \(k\geq l\).
%and \[m-l+t-k\geq (t+1)(k-t+1)-2k+t \geq (t-1)(k-t)+1.\] 
This completes the proof. \qed
\end{proof}

For the case \(i=t+1\), we have the following inequality.
\begin{lemma}\label{IM3}
Let \(n, m, k, l, s, t\) be positive integers with \(\min\{m,n\}\geq (t+1)(k-t+1)\) and \(k\geq l\geq s-1\geq t+2\geq 5\). Then
\begin{align*}
&(t+1)(n-s-k+t+2)(l-s+2)\\
&\hspace{3cm}-(s-1-t)\big((m-l-1)(n-s-k+t+1)-(k-t)(l-s+2)\big)<0.
\end{align*}
\end{lemma}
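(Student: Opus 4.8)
The plan is to follow the template of Lemma~\ref{IM2}: regard the left-hand side as a function $g(m,n,k,l,s,t)$ of $s$ with the remaining parameters fixed, prove that it is convex in $s$, and thereby reduce the inequality to checking the two endpoints of the admissible $s$-range.

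First I would set $g(m,n,k,l,s,t)$ equal to the displayed expression and compute $\partial^2 g/\partial s^2$. Each of the three summands is a product of two factors affine in $s$ (with $s$-slopes $(-1,-1)$, $(+1,-1)$, and $(+1,-1)$, respectively), so a direct differentiation yields
\[
\frac{\partial^2 g}{\partial s^2}=2\big(m-l-k+2t\big).
\]
Using $m\geq(t+1)(k-t+1)$ together with $l\leq k$ gives $m-l-k+2t\geq(t-1)(k-t)+t+1>0$, so $g$ is strictly convex in $s$. The hypotheses $s-1\geq t+2$ and $l\geq s-1$ confine $s$ to the interval $[t+3,\,l+1]$; since a convex function lies below the chord through its endpoint values, it suffices to prove $g<0$ at the two endpoints $s=t+3$ and $s=l+1$.

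Substituting the endpoints gives
\[
g(m,n,k,l,t+3,t)=(t+1)(n-k-1)(l-t-1)-2\big((m-l-1)(n-k-2)-(k-t)(l-t-1)\big)
\]
and
\[
g(m,n,k,l,l+1,t)=(t+1)(n-l-k+t+1)-(l-t)\big((m-l-1)(n-l-k+t)-(k-t)\big).
\]
For each endpoint I would first check that $g$ is decreasing in $m$ and then in $n$ throughout the admissible region: the relevant partial derivatives are $-2(n-k-2)$ and $(t+1)(l-t-1)-2(m-l-1)$ at $s=t+3$, and $-(l-t)(n-l-k+t)$ and $(t+1)-(l-t)(m-l-1)$ at $s=l+1$, all of which are negative once one uses $m-l-1\geq t(k-t)$ (a consequence of $\min\{m,n\}\geq(t+1)(k-t+1)$ and $l\leq k$) and $l-t\geq2$. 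Hence the maximum of $g$ over the region occurs at the corner $m=n=(t+1)(k-t+1)$, and each endpoint inequality collapses to an elementary polynomial inequality in $t$, $l$, and $d:=k-t\geq2$, to be verified from $t\geq3$ and $t+2\leq l\leq k$.

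The main obstacle is this endpoint verification. For $s=t+3$ the reduced inequality is the tight one: after substitution its leading term, as a polynomial in $d$, carries the coefficient $-(t-2)(t+1)$, so the hypothesis $t\geq3$ is exactly what is needed (the inequality genuinely fails at $t=2$). The endpoint $s=l+1$ is more comfortably negative—its leading term is $-t(t-1)d^{3}$—but there the dependence on $l$ is not monotone, so establishing negativity over the whole range $t+2\leq l\leq k$ calls for either a secondary convexity argument in $l$ with checks at $l=t+2$ and $l=k$, or a careful chain of estimates in the style of Lemma~\ref{IM2}; note that when $l=t+2$ the two endpoints coincide, giving a useful consistency check. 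In all cases the computations are finite and elementary, and the delicate point is merely bookkeeping the signs and magnitudes of the several competing positive and negative contributions.
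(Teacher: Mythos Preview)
Your strategy is correct and is essentially the paper's own: both arguments show convexity in $s$ via $\partial^{2}g/\partial s^{2}=2(m-k-l+2t)>0$ and then reduce to checking the two endpoints, and your endpoint substitutions and sign analyses of the $m$- and $n$-partials are accurate.

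The one difference worth noting is the choice of upper endpoint. You take the natural range endpoint $s=l+1$, which (as you correctly flag) produces an expression whose dependence on $l$ is not monotone and therefore requires a secondary argument. The paper instead checks at $s=k+1$: since $g$ is convex on all of $[t+3,k+1]\supseteq[t+3,l+1]$, negativity at $s=t+3$ and $s=k+1$ already suffices, and at $s=k+1$ the factor $l-s+2=l-k+1\le 1$ makes the verification a short direct chain of inequalities with no auxiliary convexity-in-$l$ step. This shortcut removes exactly the ``main obstacle'' you identify; otherwise the two proofs coincide.
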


\begin{proof} Let
\begin{align*}
f(m, n, k, l, s, t)&=(t+1)(n-s-k+t+2)(l-s+2)\\
&\hspace{0.5cm}-(s-1-t)\big((m-l-1)(n-s-k+t+1)-(k-t)(l-s+2)\big).
\end{align*}
Given that \(n\geq (t+1)(k-t+1)\) and \(k\geq l\), we obtain 
\begin{align*}
\frac{{\partial}^2 f}{\partial s^2}=2(m-k-l+2t)\geq 2((t-1)(k-t-1)+2t)>0.
\end{align*}
This implies that \(f(m, n, k, l, s, t)\) is a convex function in \(s\). Therefore, it suffices to show that \(f(m, n, k, l, t+3, t)<0\) and \(f(m, n, k, l, k+1, t)<0\).

Firstly, we show that \[f(m, n, k, l, t+3, t)<0.\]

Since \(\min\{m,n\}\geq (t+1)(k-t+1)\) and \(k\geq l\), we have 
%\begin{equation}\label{3-9} 
%\begin{aligned}
%(t+1)(l-t-1)-2(m-l-1) & \leq (t+1)(k-t-1)-2((t+1)(k-t+1)-k-1)\\
%&=-(t-1)(k-t+1)-2\\
%&<0.
%\end{aligned}
%\end{equation}
%This implies that 
\begin{align*}
&f(m, n, k, l, t+3, t)\\
&=(t+1)(l-t-1)(n-k-1)-2\big((m-l-1)(n-k-2)-(l-t-1)(k-t)\big)\\
&=\big((t+1)(l-t-1)-2(m-l-1)\big)(n-k-1)+2(l-t-1)(k-t)+2(m-l-1)\\
&\leq \big((t+1)(l-t-1)-2(m-l-1)\big)t(k-t)+2(l-t-1)(k-t)+2(m-l-1)\\
&=(t+1)(l-t-1)t(k-t)-2\big(t(k-t)-1\big)(m-l-1)+2(l-t-1)(k-t) \\
&\leq (t+1)(k-t-1)t(k-t)-2\big(t(k-t)-1\big)t(k-t)+2(l-t-1)(k-t)\\
&= -(t-1)(k-t+1)t(k-t)+2(l-t-1)(k-t)\\
&\leq -(k-t+1)(k-t)\big((t-1)t-2\big)\\
&<0, 
\end{align*}
where the first and second inequalities follow from (\ref{3-1}) and  (\ref{3-7}).
%\[ \min\{n, m\}-l-1\geq \min\{n, m\}-k-1\geq (t+1)(k-t+1)-k-1\geq t(k-t).\]

Finally, we show that \[f(m, n, k, l, k+1, t)<0.\]

Given that \(\min\{m,n\}\geq (t+1)(k-t+1)\) and \(k\geq t+2\), it follows that 
%\begin{equation}\label{3-10} 
%\begin{aligned}
%(t+1)-(m-l-1)(k-t) & \leq (t+1)-(k-t)((t+1)(k-t+1)-k-1)\\
%&\leq (t+1)-2(tk-t^2)\\
%&<-t(k-t).
%\end{aligned}
%\end{equation}
%This implies that 
\begin{align*}
&f(m, n, k, l, k+1, t)\\
&=(t+1)(n-2k+t+1)(l-k+1)-(k-t)\big((m-l-1)(n-2k+t)-(k-t)(l-k+1)\big)\\
&=-(t+1)(n-2k+t+1)(k-l)-(k-t)^2(k-l)\\
&\hspace{0.5cm}-((k-t)(m-l-1)-t-1)(n-2k+t)+t+1+(k-t)^2\\
&\leq -((k-t)(m-l-1)-t-1)((t-1)(k-t)+1)+t+1+(k-t)^2\\
&\leq -t(k-t)((t-1)(k-t)+1)+(k-t)^2+t+1\\
&=-(k-t)^2(t(t-1)-1)-t(k-t-2)-(t-1)\\
&<0,
\end{align*}
where the first inequality follows from (\ref{3-1}), 
%\[n-2k+t\geq (t+1)(k-t+1)-2k+t \geq (t-1)(k-t)+1,\]
and the second inequality follows from (\ref{3-8}). This completes the proof. \qed
\end{proof}

\section{Proof of Theorem \ref{TM}}
In this section, we prove Theorem \ref{stl} and Theorem \ref{TM}. Frankl, Lee,  Siggers and Tokushige \cite{FLST2014} established the following characterization of the extremal structures for the uniform cross-\(t\)-intersecting families.

\begin{theorem}[Frankl, Lee,  Siggers and Tokushige \cite{FLST2014}]
Let $n$, $k$, $t$, $r$ be positive integers such that $t\geq2$, $k - t\geq r$ and $n\geq2k - t+2$. If $\mathcal{A}\subseteq\binom{[n]}{k}$ and $\mathcal{B}\subseteq\binom{[n]}{k}$ are cross-$t$-intersecting, and $S_{ij}(\mathcal{A})=S_{ij}(\mathcal{B})=\mathcal{F}(n,k,t,r)$, then $\mathcal{A}=\mathcal{B}\cong\mathcal{F}(n,k,t,r)$.
\end{theorem}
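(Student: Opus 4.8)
The plan is to analyze the fiber of the single shift $S_{ij}$ over the target family $\mathcal{F}:=\mathcal{F}(n,k,t,r)$ and then use the cross-$t$-intersecting hypothesis to pin that fiber down. First I would record that $\mathcal{F}$ is left-compressed: since membership is governed by $|A\cap[t+2r]|$ and $i<j$, replacing $j$ by $i$ never decreases this quantity, so $S_{pq}(\mathcal{F})=\mathcal{F}$ for all $p<q$. Next, call a set $C$ with $i,j\notin C$ \emph{ambiguous} if $C\cup\{i\}\in\mathcal{F}$ but $C\cup\{j\}\notin\mathcal{F}$, and let $\mathcal{P}$ be the collection of such $C$; a direct computation shows $C\in\mathcal{P}$ precisely when $i\le t+2r<j$ and $|C\cap[t+2r]|=t+r-1$. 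Inspecting the shift operation coordinatewise on the four classes (sets containing both, exactly one, or none of $i,j$) shows that any family $\mathcal{G}$ with $S_{ij}(\mathcal{G})=\mathcal{F}$ agrees with $\mathcal{F}$ off the ambiguous pairs and, on each ambiguous pair $C$, contains exactly one of $C\cup\{i\}$ or $C\cup\{j\}$; I encode this by the \emph{flip set} $X_{\mathcal{G}}\subseteq\mathcal{P}$ of those $C$ for which $\mathcal{G}$ chose $C\cup\{j\}$. The two extreme fibers $X_{\mathcal{G}}=\emptyset$ and $X_{\mathcal{G}}=\mathcal{P}$ give $\mathcal{G}=\mathcal{F}$ and $\mathcal{G}=(i\,j)\cdot\mathcal{F}\cong\mathcal{F}$ respectively (the latter identification uses left-compressedness to rule out $j$-heavy pairs), so the whole theorem reduces to showing $(X_{\mathcal{A}},X_{\mathcal{B}})\in\{(\emptyset,\emptyset),(\mathcal{P},\mathcal{P})\}$.

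The heart is a forcing lemma extracted from cross-$t$-intersection. The key computation is that any two $C,C'\in\mathcal{P}$ satisfy $|C\cap C'|\ge t-1$ (their traces on $[t+2r]$ are two $(t+r-1)$-subsets of the $(t+2r-1)$-set $[t+2r]\setminus\{i\}$, forcing overlap $\ge t-1$), and that the only sets of $\mathcal{F}$ meeting $C\cup\{j\}$ in fewer than $t$ points are exactly the sets $C'\cup\{i\}$ with $C'\in\mathcal{P}$ and $|C\cap C'|=t-1$ — all of which necessarily contain $i$, hence lie in ambiguous pairs. I would then argue: if $C\in X_{\mathcal{A}}$ (so $C\cup\{j\}\in\mathcal{A}$) and $C'\in\mathcal{P}$ with $|C\cap C'|=t-1$, then $C'\cup\{i\}$ violates cross-$t$-intersection with $C\cup\{j\}$, so $C'\cup\{i\}\notin\mathcal{B}$; since $C'\cup\{i\}\in\mathcal{F}=S_{ij}(\mathcal{B})$ while $C'\cup\{j\}\notin\mathcal{F}$, the only surviving preimage configuration is $C'\cup\{j\}\in\mathcal{B}$, that is $C'\in X_{\mathcal{B}}$; moreover the replacement is consistent because $|(C\cup\{j\})\cap(C'\cup\{j\})|=|C\cap C'|+1=t$. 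By symmetry the same holds with $\mathcal{A},\mathcal{B}$ interchanged. Thus, letting $\mathcal{G}_0$ denote the graph on $\mathcal{P}$ with edges $\{C,C'\}$ whenever $|C\cap C'|=t-1$, membership in a flip set propagates along edges while toggling between $X_{\mathcal{A}}$ and $X_{\mathcal{B}}$.

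To finish I would prove that $\mathcal{G}_0$ is connected and non-bipartite. Splitting each $C\in\mathcal{P}$ as $C=C^{\mathrm{in}}\sqcup C^{\mathrm{out}}$ with $C^{\mathrm{in}}\subseteq[t+2r]\setminus\{i\}$ and $C^{\mathrm{out}}\subseteq[n]\setminus([t+2r]\cup\{j\})$, an edge of $\mathcal{G}_0$ is precisely a pair adjacent in both coordinates: on the inside part adjacency means $|C^{\mathrm{in}}\cap C'^{\mathrm{in}}|=t-1$, equivalently the size-$r$ complements are disjoint, i.e.\ the Kneser graph $K(t+2r-1,r)$; on the outside part adjacency means $C^{\mathrm{out}}\cap C'^{\mathrm{out}}=\emptyset$, i.e.\ the Kneser graph $K(n-t-2r-1,\,k-t-r)$. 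Hence $\mathcal{G}_0$ is the tensor product of these two Kneser graphs, and by Weichsel's theorem it is connected and non-bipartite once both factors are; the two thresholds $t+2r-1\ge 2r+1$ and $n-t-2r-1\ge 2(k-t-r)+1$ are exactly the hypotheses $t\ge 2$ and $n\ge 2k-t+2$ (the degenerate cases $r=0$ and $r=k-t$ being handled by self-loops). In a connected non-bipartite graph every vertex is joined to a fixed $C_0$ by walks of both parities, so if $X_{\mathcal{A}}\ne\emptyset$ the toggling propagation places every $C\in\mathcal{P}$ in both $X_{\mathcal{A}}$ and $X_{\mathcal{B}}$, giving $X_{\mathcal{A}}=X_{\mathcal{B}}=\mathcal{P}$; and $X_{\mathcal{A}}=\emptyset$ forces $X_{\mathcal{B}}=\emptyset$ by the same lemma. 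Either way $\mathcal{A}=\mathcal{B}\cong\mathcal{F}$.

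The main obstacle is this graph-theoretic step: correctly identifying the forcing relation as exactly the minimum-intersection ($=t-1$) relation, verifying that no dangerous set escapes being flippable, and then establishing connectivity and, crucially, non-bipartiteness of $\mathcal{G}_0$. The tensor/Kneser reduction is what makes the last point tractable and explains why the stated bounds $t\ge 2$ and $n\ge 2k-t+2$ are precisely the right ones.
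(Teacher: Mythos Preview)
Your proposal is correct, and the overall strategy coincides with what the paper does in proving its generalization, Theorem~\ref{stl} (the paper merely cites the statement you are proving, but the proof of Theorem~\ref{stl} specializes to it when $m=n$ and $k=l$). Both arguments parametrize the fiber of $S_{ij}$ over $\mathcal{F}(n,k,t,r)$ by a ``flip set'' on the ambiguous pairs, observe that the cross-$t$-intersecting constraint becomes exactly the condition $N(X_{\mathcal{A}})\subseteq X_{\mathcal{B}}$ and $N(X_{\mathcal{B}})\subseteq X_{\mathcal{A}}$ on the minimum-intersection graph $\mathcal{G}_0$, and then show that $\mathcal{G}_0$ admits odd walks between any two vertices.

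The genuine difference is in how that last step is carried out. The paper builds the odd walks by hand: Lemma~\ref{stl1} produces an alternating disjoint chain of length $2w+1$ between any two ``outside'' parts, and Lemma~\ref{stl2} produces a chain of length $2w-1$ with consecutive intersections exactly $t'$ between any two ``inside'' parts; these are then interleaved into a single odd chain from a flipped set in $\mathcal{A}$ to an arbitrary target in $\mathcal{B}$. Your route is more structural: you recognise $\mathcal{G}_0$ as the tensor product $K(t+2r-1,r)\times K(n-t-2r-1,\,k-t-r)$ and invoke Weichsel's theorem together with the fact that Kneser graphs $K(N,r)$ with $N\ge 2r+1$ are connected and non-bipartite. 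This buys you a transparent explanation of why $t\ge 2$ and $n\ge 2k-t+2$ are exactly the right thresholds (they are precisely the Kneser non-bipartiteness conditions), at the cost of importing graph-theoretic machinery that the paper avoids. Conversely, the paper's explicit-chain approach is more self-contained and adapts directly to the non-uniform setting $\mathcal{A}\subseteq\binom{[n]}{k}$, $\mathcal{B}\subseteq\binom{[m]}{l}$, where the two outside Kneser factors live on different ground sets and your tensor-product picture no longer applies verbatim. One small quibble: Weichsel's theorem gives connectivity, not non-bipartiteness, of the tensor product; you should state separately (it is easy) that a tensor product of two connected non-bipartite graphs is itself non-bipartite.
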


Borg \cite{B2016} obtained the following characterization of the extremal structures for the non-uniform cross-\(t\)-intersecting families. 

\begin{theorem}[Borg \cite{B2016}]
Let $m, n, k, l, t, r$ be positive integers such that $1 \leq t \leq l \leq k \leq n, l\leq m$ and $\min\{m,n\}\geq (t+4)(k-t)+l-1$. Let $\mathcal{A}\subseteq\binom{[n]}{k}$ and $\mathcal{B}\subseteq\binom{[m]}{l}$ be  cross-$t$-intersecting. Suppose that \(S_{ij}(\mathcal{A})=\left\{A \in \binom{[n]}{k} : T\subset A\right\}\) and \(S_{ij}(\mathcal{B})=\left\{B \in \binom{[m]}{l} : T\subset B \right\}\) for some \(t\)-element subset of \([\min\{m,n\}]\). Then \(\mathcal{A}=\left\{A \in \binom{[n]}{k} :  T' \subset A \right\}\) and \(\mathcal{B}=\left\{B \in \binom{[m]}{l} :  T' \subset B  \right\}\) for some \(t\)-element subset \(T'\subset [\min\{m,n\}]\). 
\end{theorem}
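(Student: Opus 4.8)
The plan is to reverse the single compression \(S_{ij}\) and use the cross-\(t\)-intersecting coupling to rigidify both families. Since \(S_{ij}\) preserves cardinalities and the two images are the \emph{full} stars through \(T\), we have \(|\mathcal{A}|=\binom{n-t}{k-t}\) and \(|\mathcal{B}|=\binom{m-t}{l-t}\) from the outset. First I would locate \(\{i,j\}\) relative to \(T\): if some set of \(\mathcal{A}\) is genuinely moved by \(S_{ij}\), say \(A\mapsto(A\setminus\{j\})\cup\{i\}\), then the image lies in the star, forcing \((A\setminus\{j\})\cup\{i\}\supseteq T\); since \(j\notin(A\setminus\{j\})\cup\{i\}\) this gives \(j\notin T\), and if also \(i\notin T\) one gets \(T\subseteq A\), so \(A\) itself lies in the star yet is shifted out — a contradiction. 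Hence \textbf{either} \(\mathcal{A}=S_{ij}(\mathcal{A})\) is already the star through \(T\), \textbf{or} \(i\in T\) and \(j\notin T\); the same dichotomy holds for \(\mathcal{B}\). If one family, say \(\mathcal{A}\), is the full star through \(T\), then any \(B\in\mathcal{B}\) with \(|B\cap T|<t\) fails to \(t\)-intersect the set \(T\cup X\) for a \((k-t)\)-set \(X\subseteq[n]\) disjoint from \(B\) (available since \(n\) is large), so \(T\subseteq B\); as \(|\mathcal{B}|\) is already maximal, \(\mathcal{B}\) is the star through \(T\), and we are done with \(T'=T\).

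It remains to treat the case \(i\in T\), \(j\notin T\) with both families genuinely moved. Here I would partition \(\binom{[n]}{k}\) into \(\mathcal{X}_{00},\mathcal{X}_{01},\mathcal{X}_{10},\mathcal{X}_{11}\) according to membership of \(i\) and \(j\), together with the bijection \((A\setminus\{j\})\cup\{i\}\) from \(\mathcal{X}_{01}\) to \(\mathcal{X}_{10}\). Tracking how \(S_{ij}\) acts on each pair shows that \(\mathcal{A}\) must agree with the star on the unshifted blocks: \(\mathcal{A}\cap\mathcal{X}_{00}=\emptyset\) and \(\mathcal{A}\cap\mathcal{X}_{11}=\{A:T\cup\{j\}\subseteq A\}\), while on each \emph{active} pair \(\{R\cup\{i\},\,R\cup\{j\}\}\) with \(R\supseteq T\setminus\{i\}\) the family \(\mathcal{A}\) contains \emph{exactly one} of the two sets. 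Writing \(T'=(T\setminus\{i\})\cup\{j\}\), choosing the \(\{j\}\)-side on every active pair reconstitutes the star through \(T'\), and choosing the \(\{i\}\)-side on every pair reconstitutes the star through \(T\); the content of the theorem is that no \emph{mixed} choice can survive. The family \(\mathcal{B}\) has the identical description in \(\binom{[m]}{l}\).

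The crux is to kill mixed choices using the cross-condition. Suppose \(\mathcal{A}\) contains both an \(\{i\}\)-set \(R_0\cup\{i\}\) and a \(\{j\}\)-set \(R_1\cup\{j\}\). For any active \(R'\) on the \(\mathcal{B}\)-side, whichever side \(\mathcal{B}\) places it on, the cross-intersection with \(R_0\cup\{i\}\) or with \(R_1\cup\{j\}\) equals \(|R_0\cap R'|\) or \(|R_1\cap R'|\), each of which always contains \(T\setminus\{i\}\) (size \(t-1\)); so the condition \emph{forces} \(R'\) to meet \(R_0\) or \(R_1\) in \emph{more} than \(T\setminus\{i\}\). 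But \(R_0\cup R_1\) has at most \(2(k-t)\) elements outside \(T\setminus\{i\}\), so when \(\min\{m,n\}\) is large there is room to choose an \((l-1)\)-set \(R'\supseteq T\setminus\{i\}\) in \([m]\setminus\{i,j\}\) whose remaining \(l-t\) elements avoid all of them; this \(R'\) then produces a pair with intersection \emph{exactly} \(T\setminus\{i\}\), of size \(t-1\), contradicting cross-\(t\)-intersection. Hence \(\mathcal{A}\) is pure; being different from \(S_{ij}(\mathcal{A})\), it must be the star through \(T'\).

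Finally, with \(\mathcal{A}\) equal to the star through \(T'\) (so it uses the \(\{j\}\)-side on every active pair), I would run the same avoidance count on the other side to align \(\mathcal{B}\): if \(\mathcal{B}\) used the \(\{i\}\)-side on some active \(R'\), then choosing an active \(R\subseteq[n]\setminus\{i,j\}\) with \(R\cap R'=T\setminus\{i\}\) (possible since \(n\) is large) exhibits \(R\cup\{j\}\in\mathcal{A}\) meeting \(R'\cup\{i\}\in\mathcal{B}\) in only \(t-1\) elements — impossible. Thus \(\mathcal{B}\) is also the star through \(T'\), and \(T'\) is the common center. I expect the main obstacle to be exactly the counting in the third paragraph: one must guarantee enough elements outside the at most \(2(k-t)\) extra elements of \(R_0\cup R_1\) (respectively of \(R'\)) to realize an intersection of size precisely \(t-1\), and this is where the lower bound on \(\min\{m,n\}\) — here \((t+4)(k-t)+l-1\), though a weaker bound already suffices — enters.
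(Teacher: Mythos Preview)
Your argument is correct. Note, however, that the paper does not give its own proof of Borg's theorem; it quotes the result and then proves a more general statement, Theorem~\ref{stl}, which handles the families $\mathcal{F}(\cdot,\cdot,t,r)$ for all $r\ge 0$ under the hypothesis $\min\{m,n\}\ge k+l-t+2$ (and $t\ge 2$). Specializing that proof to $r=0$ gives the relevant comparison.

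The overall case split is the same --- locate $i,j$ relative to $T$, dispose of the easy cases, and focus on $i\in T$, $j\notin T$ with both families genuinely moved --- but the crux is handled differently. You argue by contradiction on $\mathcal{A}$: if $\mathcal{A}$ contains both an $\{i\}$-side pair $R_0\cup\{i\}$ and a $\{j\}$-side pair $R_1\cup\{j\}$, a single avoidance count produces an active $R'$ on the $\mathcal{B}$-side with $R'\cap R_0=R'\cap R_1=T\setminus\{i\}$, so whichever side $\mathcal{B}$ puts $R'$ on, some cross-intersection drops to $t-1$. The paper instead \emph{propagates}: starting from one $\{j\}$-side set in $\mathcal{A}$, it uses Lemma~\ref{stl1} (and, for $r>0$, Lemma~\ref{stl2}) to build an alternating chain $D_1\in\mathcal{A},\,D_2\in\mathcal{B},\,D_3\in\mathcal{A},\ldots$ in which each member forces the next onto the $\{j\}$-side, eventually pinning every active pair of $\mathcal{B}$, and then of $\mathcal{A}$. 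Your direct avoidance is shorter and uses only the crude bound $\min\{m,n\}\ge 2(k-t)+l+1$, but it does not obviously extend to $r\ge 1$: there an active pair also carries a $(t+r-1)$-subset $V\subseteq T\setminus\{i\}$, and pairs with different $V$ must be linked --- exactly what the paper's chain lemmas are built for.
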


We generalize the above characterization of the extremal structures for the cross-\(t\)-intersecting families by starting with two technical lemmas.

\begin{lemma}\label{stl1}
Let \(m, n, k, l, t, r\) be positive integers satisfying \(k\geq l \geq t+r\) and \(\min\{m, n\}\geq k+l-t+2\). Let \(T\) be a \((t+2r)\)-element subset of \([\min\{m,n\}]\) and fix \(j\in [\min\{m,n\}]\setminus T\). For any subsets \(A\in \binom{[n]\setminus (T\cup \{j\})}{k-r-t}\) and \(B \in \binom{[m]\setminus (T\cup \{j\})}{l-r-t}\) with \(|A\cap B|=w\), there exist subsets \(A_1, A_2, \ldots, A_{w+1} \in \binom{[n]\setminus (T\cup \{j\})}{k-r-t}\) and \(B_1, B_2, \ldots, B_{w+1} \in \binom{[m]\setminus (T\cup \{j\})}{l-r-t}\) such that
\begin{itemize}
\item[{\rm (1)}] \(A_i \cap B_i=\emptyset\) for \(i=1,\ldots, w+1\);
\item[{\rm (2)}] \(A_{i+1} \cap B_i=\emptyset\) for \(i=1,\ldots, w\);
\item[{\rm (3)}] \(A_1=A, B_{w+1}=B\).
\end{itemize}
\end{lemma}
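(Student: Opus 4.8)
The plan is to give an \emph{explicit} construction of the two chains, reducing everything to careful bookkeeping inside the common part of the two ground sets. Write $V = [\min\{m,n\}]\setminus(T\cup\{j\})$ for the region where an element of $[n]$ can coincide with an element of $[m]$, so that $|V| = \min\{m,n\}-(t+2r+1)$. Since $A\cap B\subseteq [n]\cap[m]=[\min\{m,n\}]$ and $A\cap B$ avoids $T\cup\{j\}$, all $w$ common elements already lie in $V$; write $A\cap B=\{c_1,\dots,c_w\}$. The first key step is a counting estimate. Because $|A|+|B|=(k-r-t)+(l-r-t)=k+l-2r-2t$ and the hypothesis gives $|V|\ge (k+l-t+2)-(t+2r+1)=|A|+|B|+1$, using $A\cap B\subseteq V$ one gets $|(A\cup B)\cap V|\le |A|+|B|-w$ and hence $|V\setminus(A\cup B)|\ge w+1$. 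So I may fix $w+1$ \emph{spare} elements $y_1,\dots,y_{w+1}\in V\setminus(A\cup B)$, each of which is legitimate in both an $A$-type set and a $B$-type set.

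Next I would define the two chains by sliding the common elements out in favour of spare ones. For $1\le i\le w+1$ set
\[ A_i=(A\setminus\{c_1,\dots,c_{i-1}\})\cup\{y_1,\dots,y_{i-1}\},\qquad B_i=(B\setminus\{c_i,\dots,c_w\})\cup\{y_{i+1},\dots,y_{w+1}\}. \]
Each replacement deletes and inserts the same number of distinct new elements, so $A_i$ has size $k-r-t$ and $B_i$ has size $l-r-t$; since the $y$'s lie in $V$, we have $A_i\subseteq[n]\setminus(T\cup\{j\})$ and $B_i\subseteq[m]\setminus(T\cup\{j\})$, as required. Condition (3) is immediate because the index ranges are empty at the endpoints, giving $A_1=A$ and $B_{w+1}=B$. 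For the disjointness I would verify (1) and (2) simultaneously by showing that $B_i$ is disjoint from $(A\setminus\{c_1,\dots,c_{i-1}\})\cup\{y_1,\dots,y_i\}$, a set containing both $A_i$ and $A_{i+1}$. Using $A\cap B=\{c_1,\dots,c_w\}$, the $B$-part of $B_i$ meets $A\setminus\{c_1,\dots,c_{i-1}\}$ only in $\{c_i,\dots,c_w\}$, which has been deleted, while the spare part $\{y_{i+1},\dots,y_{w+1}\}$ is disjoint from $A$ and from $\{y_1,\dots,y_i\}$ by disjointness of index ranges; the remaining case $i=w+1$ of (1) is the separate observation that $A_{w+1}=(A\setminus B)\cup\{y_1,\dots,y_w\}$ is disjoint from $B_{w+1}=B$.

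The main point to get right, and the only genuine obstacle, is the index arithmetic that forces a single $B_i$ to avoid two consecutive $A$-sets at once. The delicate feature is that $A_{i+1}$ has already absorbed the spare element $y_i$, so $B_i$ may carry only spare elements of index strictly larger than $i$; this is exactly what dictates the asymmetric ranges $\{y_1,\dots,y_{i-1}\}$ in $A_i$ against $\{y_{i+1},\dots,y_{w+1}\}$ in $B_i$, and it explains why $w+1$ rather than $w$ spare elements are consumed. Everything else is routine, but the counting in the first step is where the bound $\min\{m,n\}\ge k+l-t+2$ is used tightly: the ``$+2$'' is precisely what promotes the trivial single spare element to the $w+1$ the construction needs, and the restriction to the common region $V$ is essential so that each $y_p$ is simultaneously valid for families living over the two different ground sets $[n]$ and $[m]$.
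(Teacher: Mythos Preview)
Your proof is correct and takes essentially the same approach as the paper: pick $w+1$ spare elements in the common ground set and slide the elements of $A\cap B$ out one at a time, so that your closed-form definitions unwind to the paper's recursive ones under the relabelling $y_p\leftrightarrow b_{p-1}$. Your choice of spares from $V\setminus(A\cup B)$ rather than merely from the complement of $A$ is a harmless (indeed slightly cleaner) tightening that ensures the $B_i$ have the right size without further comment.
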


\begin{proof}
Let \(A\cap B=\{a_{1}, a_{2}, \ldots, a_{w}\}\). Since \(\min\{m, n\} \geq k+l-t+2\), we have 
\[|[\min\{m, n\}]\setminus (T\cup \{j\} \cup A)|=\min\{m, n\}-(t+2r)-1-(k-t-r)\geq (l-t-r)+1 \geq w+1.\]
Take a \((w+1)\)-element subset from \([\min\{m, n\}]\setminus (T\cup \{j\} \cup A)\), denoted as \(\{b_0, b_1, b_2, \ldots, b_{w}\}.\) Set \(B_1=(B\setminus A) \cup \{b_1, b_2, \ldots, b_{w}\}\). Then, recursively define subsets \(B_2, \ldots, B_{w+1} \in \binom{[m]\setminus (T\cup \{j\})}{l-r-t}\) and \( A_2, \ldots, A_{w+1} \in \binom{[n]\setminus (T\cup \{j\})}{k-r-t}\) as follows: 
\begin{align*}
A_s=\left(A_{s-1}\setminus \{a_{s-1}\}\right) \cup \{b_{s-2}\}\ {\rm and}\ \  B_s=\left(B_{s-1}\setminus \{b_{s-1}\}\right) \cup \{a_{s-1}\}
\end{align*}
for \(s=2,\ldots, w+1\). Clearly, \(A_1, A_2, \ldots, A_{w+1}\) and \(B_1, B_2, \ldots, B_{w+1}\) satisfy the required properties. This completes the proof. \qed
\end{proof}

\begin{lemma}\label{stl2}
Let \(n, t', r\) be positive integers. Let \(T\) be a \((t'+2r)\)-element subset of \([n]\). For any distinct subsets \(A, B\in \binom{T}{r+t'}\), there exist subsets \(A_1, A_2, \ldots, A_{2w} \in \binom{T}{r+t'}\) such that
\begin{itemize}
\item[{\rm (1)}] \(|A_i \cap A_{i+1}|=t'\) for \(i=1,\ldots, 2w-1\);
\item[{\rm (2)}] \(A_1=B, A_{2w}=A\),
\end{itemize}
where  \(w=\left\lfloor\frac{|A \cap B|}{t'}\right\rfloor+1.\)
\end{lemma}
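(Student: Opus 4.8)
The plan is to pass to complements within $T$, which converts the condition ``$|A_i\cap A_{i+1}|=t'$'' into a disjointness condition and reduces the problem to building a walk of a prescribed length whose consecutive terms are disjoint $r$-sets. For $X,Y\in\binom{T}{r+t'}$ one has $|X\cap Y|=2(r+t')-|X\cup Y|\ge 2(r+t')-|T|=t'$, with equality exactly when $X\cup Y=T$, i.e. when the complements $X^{c}:=T\setminus X$ and $Y^{c}:=T\setminus Y$ (each an $r$-subset of $T$) are disjoint. Writing $c_i=A_i^{c}$, the required chain is equivalent to a sequence $c_1,\dots,c_{2w}$ in $\binom{T}{r}$ with $c_i\cap c_{i+1}=\emptyset$, $c_1=B^{c}$ and $c_{2w}=A^{c}$. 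Setting $d:=|A^{c}\cap B^{c}|$, a short count gives $|A\cap B|=|T|-|A^{c}\cup B^{c}|=t'+d$, so the target length is $2w-1=2\lfloor d/t'\rfloor+3$ edges. Thus it suffices to produce, for any two $r$-subsets $\alpha:=A^{c}$ and $\beta:=B^{c}$ of $T$ with $|\alpha\cap\beta|=d\le r$, a walk from $\beta$ to $\alpha$ of exactly $2\lfloor d/t'\rfloor+3$ edges in which consecutive terms are disjoint.

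I would prove this by induction on $\lfloor d/t'\rfloor$. For the base case $0\le d<t'$ I would exhibit an explicit four-term walk $\beta,c_2,c_3,\alpha$. Decompose $T=C\sqcup Y\sqcup Z\sqcup U$, where $C=A\cap B$ (so $|C|=t'+d$), $Y=A\setminus B$ and $Z=B\setminus A$ (each of size $r-d$), and $U=T\setminus(A\cup B)=\alpha\cap\beta$ (of size $d$); then $\beta=Y\cup U$ and $\alpha=Z\cup U$. Choosing two disjoint $d$-subsets $P,Q\subseteq C$ — which is possible precisely because $2d\le t'+d=|C|$ holds when $d\le t'$ — and setting $c_2=Z\cup P$ and $c_3=Y\cup Q$, each of size $r$, one checks directly that the three consecutive pairs $\{\beta,c_2\}$, $\{c_2,c_3\}$, $\{c_3,\alpha\}$ are disjoint, using that $c_2\subseteq C\cup Z$ avoids $\beta=Y\cup U$ while $c_3\subseteq C\cup Y$ avoids $\alpha=Z\cup U$. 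This realizes the case $w=2$.

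For the inductive step $d\ge t'$ I would reduce the overlap by exactly $t'$ using two edges. Pick a $t'$-subset $S\subseteq\alpha\cap\beta$ and a $t'$-subset $S'\subseteq T\setminus(\alpha\cup\beta)$ (which has size $t'+d\ge 2t'$, so $S'$ exists), and put $\beta'=(\beta\setminus S)\cup S'$; then $|\beta'|=r$ and $|\alpha\cap\beta'|=d-t'$. The set $\mu:=T\setminus(\beta\cup\beta')=T\setminus(\beta\cup S')$ has size $|T|-(r+t')=r$ and is disjoint from both $\beta$ and $\beta'$, so $\beta,\mu,\beta'$ is a legal two-edge walk. Applying the induction hypothesis to the pair $(\beta',\alpha)$, whose overlap is $d-t'$, yields a walk of $2\lfloor d/t'\rfloor+1$ edges; prepending $\beta,\mu$ gives exactly $2+\bigl(2\lfloor d/t'\rfloor+1\bigr)=2\lfloor d/t'\rfloor+3$ edges, as required. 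Translating back through complements produces the desired chain $A_1,\dots,A_{2w}$.

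The main obstacle, and the reason the construction is arranged this way rather than simply invoking connectivity of the associated Kneser graph, is the requirement that the chain have \emph{exactly} $2w$ terms. One must consume overlap at the precise rate of $t'$ per two steps, so that the total edge-count lands on $2\lfloor d/t'\rfloor+3$ on the nose; this is what forces the split into the $d<t'$ base case (where a direct four-term walk is available and the counting $2d\le|C|$ is exactly feasible) and the $d\ge t'$ reduction. The remaining work is the routine verification of the disjointness and cardinality claims at each stage, all of which follow from the decomposition $T=C\sqcup Y\sqcup Z\sqcup U$ together with the bound $d\le r$ coming from $A\neq B$.
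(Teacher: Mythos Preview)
Your argument is correct, and it is genuinely different from the paper's. The paper does not pass to complements; instead it fixes a labelling $T=\{a_1,\dots,a_{t'+2r}\}$ with $A\cap B=\{a_1,\dots,a_{t'+b}\}$ and writes down, for each $s=0,\dots,m$ (where $b=mt'+b'$), explicit sets $A_{2s+1}$ and $A_{2s+2}$ by shifting blocks of $t'$ consecutive indices, then checks directly that every consecutive intersection has size exactly $t'$. Your approach trades this index bookkeeping for the observation that $|X\cap Y|=t'$ in $\binom{T}{r+t'}$ is equivalent to $X^{c}\cap Y^{c}=\emptyset$ in $\binom{T}{r}$, after which the problem becomes producing a walk of prescribed length in a Kneser-type graph; the induction that peels off $t'$ overlap per two steps makes the length count $2\lfloor d/t'\rfloor+3$ transparent. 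The paper's construction is more explicit (one can read off the chain), while yours is more conceptual and arguably easier to verify, since each local check reduces to disjointness of sets drawn from the decomposition $T=C\sqcup Y\sqcup Z\sqcup U$. Note that in your base case with $d=0$ the walk degenerates to $\beta,\alpha,\beta,\alpha$; this is harmless since the lemma does not require the $A_i$ to be distinct, but it may be worth remarking on.
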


\begin{proof}
Let \(|A\cap B|=t'+b\). Denote \(A\cap B=\{a_{1}, a_{2}, \ldots, a_{t'+b}\}\) and let \(T=\{a_{1}, a_{2}, \ldots, a_{t'+2r}\}\). Without loss of generality, we may assume \((t'+r)\)-element subsets \( A=\{a_{1}, a_{2}, \ldots, a_{t'+r}\}\) and \(B=\{a_{1}, a_{2}, \ldots, a_{t'+b}\}\cup \{ a_{t'+r+1}, a_{t'+r+2}, \ldots, a_{t'+2r-b} \}.\) Set
\(b=mt'+b'\), where \(0\leq b'<t'\). Note that \(r> b\) and \(w=m+2\). For each $s=0,1,\ldots,m$, define the following subsets 
\begin{align*}
A_{2s+1}=&\{a_{1}, a_{2}, \ldots, a_{b+t'(1-s)}\} \cup  \{a_{r+t'+1}, a_{r+t'+2}, \ldots, a_{(s+1)t'+2r-b}\},\ {\rm and}\\
A_{2s+2}=&\{a_{b-st'+1}, a_{b-st'+2}, \ldots, a_{t'+r}\} \cup  \{a_{(s+1)t'+2r-b+1}, a_{(s+1)t'+2r-b+2}, \ldots, a_{t'+2r}\}. 
\end{align*}
Set
\begin{align*}
A_{2m+3}=&\{a_{1}, a_{2}, \ldots, a_{b-t'm}\} \cup  \{a_{b+r-t'm+1}, a_{b+r-t'm+2}, \ldots, a_{t'+2r}\},\ {\rm and}\\
A_{2m+4}=&\{a_{1}, a_{2}, \ldots, a_{t'+r}\}.
\end{align*}
By the definition of \(A_s\), we have 
\[A_{1}=\{a_{1}, a_{2}, \ldots, a_{b+t'}\} \cup  \{a_{r+t'+1}, a_{r+t'+2}, \ldots, a_{t'+2r-b}\}=B\ {\rm and}\ A_{2m+4}=A.\]
Since \(r> b\), each \(A_s\) is a \((t'+r)\)-subset of \(T\). It is easy to see that
\begin{align*}
|A_{2s+1} \cap A_{2s+2}|&=|\{a_{b-st'+1}, a_{b-st'+2}, \ldots, a_{b+t'(1-s)}\} |=t' \ {\rm for }\ s=0,1,\ldots, m,\ {\rm and}\\
|A_{2m+3}\cap A_{2m+4}|&=|\{a_{1}, a_{2}, \ldots, a_{b-t'm}\} \cup  \{a_{b+r-t'm+1}, a_{b+r-t'm+2}, \ldots, a_{t'+r}\}|=t'.
\end{align*}
Moreover, since \(mt' \leq b< (m+1)t'\) and \(r> b\), we have 
\begin{align*}
& |A_{2s} \cap A_{2s+1}|=|\{a_{st'+2r-b+1}, a_{st'+2r-b+2}, \ldots, a_{(s+1)t'+2r-b}\}|=t' \ {\rm for}\ s=1,\ldots, m-1,\ {\rm and}\\
& |A_{2m+2}\cap A_{2m+3}|=|\{a_{b+r-mt'+1}, a_{b+r-mt'+2}, \ldots, a_{t'+r}\} \cup  \{a_{(m+1)t'+2r-b+1}, \ldots, a_{t'+2r}\}|=t'.
\end{align*}
This completes the proof. \qed
\end{proof}

Now, we present the characterization of the general extremal structures for the cross-\(t\)-intersecting families.

\begin{theorem}\label{stl}
Let \(m, n, k, l, t, r\) be positive integers such that \(t\geq 2, k\geq l \geq t+r\) and \(\min\{m, n\}\geq k+l-t+2\). Let \(i,j\in [\max \{ m, n\}] \) with \(i<j\). Let $\mathcal{A}\subseteq\binom{[n]}{k}$ and $\mathcal{B}\subseteq\binom{[m]}{l}$ be cross-$t$-intersecting families. Suppose there exists a \((t+2r)\)-element subset \(T\subset [\min\{m, n\}]\) such that \(S_{ij}(\mathcal{A})=\left\{A \in \binom{[n]}{k} : |A \cap T| \geq t+r \right\}\), \(S_{ij}(\mathcal{B})=\left\{B \in \binom{[m]}{l} : |B \cap T| \geq t+r \right\}\). Then 
 \(\mathcal{A}=\left\{A \in \binom{[n]}{k} : |A \cap T'| \geq t+r \right\}\) and \(\mathcal{B}=\left\{B \in \binom{[m]}{l} : |B \cap T'| \geq t+r \right\}\) for some \((t+2r)\)-element subset \(T'\subset [\min\{m, n\}]\). 
\end{theorem}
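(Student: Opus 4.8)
The plan is to reconstruct $\mathcal{A}$ and $\mathcal{B}$ from the single piece of information that the shift $S_{ij}$ turns them into the standard families $\mathcal{F}_A:=\{A\in\binom{[n]}{k}:|A\cap T|\geq t+r\}$ and $\mathcal{F}_B:=\{B\in\binom{[m]}{l}:|B\cap T|\geq t+r\}$, and then to show that the available freedom is rigid. First I would record which configurations the hypothesis allows. Writing each set containing exactly one of $i,j$ as $R\cup\{i\}$ or $R\cup\{j\}$ with $R\not\ni i,j$, the operation $S_{ij}$ pairs $R\cup\{j\}$ with $R\cup\{i\}$; comparing $|R\cap T|$ across such a pair and using that $\mathcal{F}_A,\mathcal{F}_B$ must themselves be $S_{ij}$-stable (being images of $S_{ij}$), one checks that genuine freedom survives only when $i\in T$ and $j\notin T$. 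In every other situation (namely $\{i,j\}\subseteq T$, $\{i,j\}\cap T=\emptyset$, or $i\notin T$ with $j\in T$, and likewise when $j>\min\{m,n\}$ so that the family on the smaller ground set cannot even contain $j$) every pair is forced, whence $\mathcal{A}=\mathcal{F}_A$ and $\mathcal{B}=\mathcal{F}_B$ and we may take $T'=T$. So I would reduce to $i\in T$, $j\notin T$, $j\leq\min\{m,n\}$.

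In this main case $\mathcal{A}$ is obtained from $\mathcal{F}_A$ by decreeing, for each \emph{free orbit} (a common part $R$ with $|R\cap T|=t+r-1$, $R\not\ni i,j$), whether to keep $R\cup\{i\}$ (``unflipped'') or to use $R\cup\{j\}$ (``flipped''); the same holds for $\mathcal{B}$. A direct membership computation, splitting on whether a set contains $i$, $j$, both, or neither, shows that flipping \emph{every} free orbit converts $\mathcal{F}_A,\mathcal{F}_B$ into the standard families with core $T'=(T\setminus\{i\})\cup\{j\}$; hence it suffices to prove that the flip status is globally constant and the same for $\mathcal{A}$ and $\mathcal{B}$. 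I would extract this from cross-$t$-intersection. Writing a common part as the disjoint union $(R\cap T)\cup(R\setminus T)$ and noting $R\cap T\subseteq T\setminus\{i\}$, two free orbits $R_A,R_B$ always satisfy $|R_A\cap R_B|\geq t-1$, with equality exactly when $R_A\setminus T$ and $R_B\setminus T$ are disjoint and $|R_A\cap R_B\cap T|=t-1$. Examining every pair type, the only way two chosen sets can meet in fewer than $t$ points is a flipped orbit of one family against an unflipped orbit of the other with common parts meeting in exactly $t-1$; flipped--flipped, unflipped--unflipped, and every pair involving a forced set are automatically $\geq t$. Defining a bipartite \emph{conflict graph} $G$ on the free orbits of $\mathcal{A}$ and of $\mathcal{B}$, with an edge whenever the common parts meet in exactly $t-1$, this is precisely the statement that the two endpoints of each edge must carry the same flip status; therefore the flip status is constant on every connected component of $G$.

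It then remains to prove that $G$ is connected, which I expect to be the main obstacle, since an edge requires \emph{simultaneously} disjoint outside-$T$ parts and $T$-parts meeting in exactly $t-1$. The two auxiliary lemmas are tailored to handle the two coordinates separately. Lemma~\ref{stl2}, applied with $t'=t-1$ to the $(t+2r-1)$-element set $T\setminus\{i\}$ (whose $(t+r-1)$-subsets are exactly the admissible $T$-parts), connects any two $T$-parts by a chain of successive intersections $t-1$, so the ``$T$-part graph'' is connected. For a fixed pair $P,Q$ of $T$-parts with $|P\cap Q|=t-1$, Lemma~\ref{stl1} supplies, for any prescribed outside-parts $U\in\binom{[n]\setminus(T\cup\{j\})}{k-t-r}$ and $V\in\binom{[m]\setminus(T\cup\{j\})}{l-t-r}$, an alternating chain through sets with empty intersections; reading it off, every $\mathcal{A}$-orbit with $T$-part $P$ is joined in $G$, via $\mathcal{B}$-orbits with $T$-part $Q$, to every $\mathcal{B}$-orbit with $T$-part $Q$ (the $t-1$ overlap being held fixed throughout). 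Combining the two — stepping the $T$-part along the chain from Lemma~\ref{stl2}, alternating the family roles, and absorbing all outside-parts by Lemma~\ref{stl1} at each step — yields connectivity of $G$; the room to realize every intermediate orbit and to keep the outside-parts disjoint is exactly what $\min\{m,n\}\geq k+l-t+2$ and $l\geq t+r$ provide (the same estimate that underlies Lemma~\ref{stl1}). Checking that these two reconfigurations interleave without ever breaking an edge condition, and that no free orbit is isolated, is the delicate bookkeeping. Once $G$ is connected the flip status is a single global value shared by both families: if unflipped we take $T'=T$, and if flipped we take $T'=(T\setminus\{i\})\cup\{j\}$, which finishes the proof.
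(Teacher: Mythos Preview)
Your proposal is correct and follows essentially the same approach as the paper: the same case split on the positions of $i,j$ relative to $T$, the same identification of the ``free orbits'' with $|R\cap T|=t+r-1$ in the main case $i\in T,\ j\notin T$, and the same use of Lemma~\ref{stl1} (to reconfigure outside-$T$ parts) interleaved with Lemma~\ref{stl2} applied with $t'=t-1$ on $T\setminus\{i\}$ (to walk through $T$-parts meeting in $t-1$) to propagate the flip status. Your conflict-graph/connectivity language is a clean repackaging of the paper's explicit alternating chain $D_1,D_2,\ldots$; the only point to tighten is the sentence lumping $j>\min\{m,n\}$ into ``every pair is forced''---there $\mathcal{B}$ is forced but $\mathcal{A}$ still has free orbits, and one needs the same cross-$t$-intersection argument (find $R_B$ with $|R_A\cap R_B|=t-1$) to force $\mathcal{A}=\mathcal{F}_A$, which is exactly the paper's Subcase~1/2 reasoning.
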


\begin{proof}
Based on the relationship between \(i\) and \(j\), we divide the proof into four cases.

{\bf Case 1}: \(i\notin T\) and \(j \notin T\). It is not difficult to see that 
\begin{equation}\label{MTHS}
\begin{aligned}
S_{ij}(\mathcal{A})&=\left\{A \in \binom{[n]}{k} : |A \cap T| \geq t+r \right\}=\bigcup\limits_{V\subseteq T,\atop t+2r \geq |V|\geq t+r}\left\{A \in \binom{[n]}{k} : V\subseteq A \right\}\\
&=\bigcup\limits_{V\subseteq T,\atop \min\{t+2r, k\} \geq |V|\geq t+r}\left\{A'\cup V : A' \in \binom{[n]\setminus T}{k-|V|} \right\}. 
%S_{ij}(\mathcal{B})&=\left\{B \in \binom{[n]}{l} : |B \cap T| \geq t+r \right\}=\bigcup\limits_{U\subseteq T,\atop t+2r \geq |U|\geq t+r}\left\{B \in \binom{[n]}{l} : U\subseteq B \right\}\\
%&=\bigcup\limits_{U\subseteq T,\atop t+2r \geq |U|\geq t+r}\left\{B'\cup U : B' \in \binom{[n]\setminus T}{l-|U|} \right\}.
\end{aligned}
\end{equation}
Given that \(i\notin T\) and \(j \notin T\), by the definition of the shift operation \(S_{ij}\) and (\ref{MTHS}), it follows that \(\mathcal{A}=S_{ij}(\mathcal{A})\). Similarly, we have \(\mathcal{B}=S_{ij}(\mathcal{B})\). Then take \(T'=T\), and the conclusion follows. 

{\bf Case 2}: \(i\in T\) and \(j \in T\). Similarly, it holds that 
{\begin{equation}\label{MTHS1}
\begin{aligned}
S_{ij}(\mathcal{A})&=\left\{A \in \binom{[n]}{k} : |A \cap T| \geq t+r \right\}=\bigcup\limits_{V\subseteq T,\atop t+2r \geq |V|\geq t+r}\left\{A \in \binom{[n]}{k} : V\subseteq A \right\}\\
&=\bigcup\limits_{A' \subseteq [n]\setminus T,\atop k-t-r \geq |A'|\geq \min\{0, k-2r-t\}}\left\{A'\cup V : V \in \binom{T}{k-|A'|} \right\}.
%S_{ij}(\mathcal{B})&=\left\{B \in \binom{[n]}{l} : |B \cap T| \geq t+r \right\}=\bigcup\limits_{U\subseteq T,\atop t+2r \geq |U|\geq t+r}\left\{B \in \binom{[n]}{l} : U\subseteq B \right\}\\
%&=\bigcup\limits_{B'\subseteq [n]\setminus T,\atop l-t-r \geq |B'|\geq l-2r-t}\left\{B'\cup U : U \in \binom{T}{l-|B'|} \right\}.
\end{aligned}
\end{equation}}
Given that \(i\in T\) and \(j \in T\), it follows from (\ref{MTHS1}) that \(\mathcal{A}=S_{ij}(\mathcal{A})\). Similarly, we have \(\mathcal{B}=S_{ij}(\mathcal{B})\). Then take \(T'=T\), and the conclusion follows. 

{\bf Case 3}: \(j\in T\) and \(i \notin T\). In this case, we have 
{\small
\begin{align*}
&S_{ij}(\mathcal{A})=\left\{A \in \binom{[n]}{k} : |A \cap T| \geq t+r \right\}=\bigcup\limits_{V\subseteq T\setminus \{j\},\atop \min\{t+2r-1,k\} \geq |V|\geq t+r}\left\{A \in \binom{[n]\setminus \{i,j\}}{k} :  V\subseteq A \right\}\\
&\ \ \ \hspace{2cm}\cup\bigcup\limits_{V\subseteq T\setminus \{j\},\atop\min\{t+2r-1,k-2\} \geq |V|\geq t+r-1}\left\{A \in \binom{[n]}{k} : \{i, j\}\cup V\subseteq A \right\}\\
& \ \ \ \hspace{2cm}\cup\bigcup\limits_{V\subseteq T\setminus \{j\},\atop \min\{t+2r-1,k-1\} \geq |V|\geq t+r}\bigcup\limits_{A' \subseteq [n]\setminus (T\cup \{i\}),\atop  |A'|=k-|V|-1}\left\{A'\cup V \cup \{i\}, A'\cup V \cup \{j\} \right\}\\
& \ \ \ \hspace{2cm}\cup\bigcup\limits_{V\subseteq T\setminus \{j\},\atop |V|=t+r-1} \left\{A'\cup V \cup \{j\} : A' \in \binom{[n]\setminus (T\cup \{i\})}{k-r-t} \right\}.
\end{align*}}
This implies that  
\[A'\cup V \cup \{j\}\in S_{ij}(\mathcal{A})\ \text{ and}\ A'\cup V \cup \{i\} \notin S_{ij}(\mathcal{A})\]
for \(A' \in \binom{[n]\setminus (T\cup \{i\})}{k-r-t}\) and \(V\subseteq T\setminus \{j\}\) with \(|V|=t+r-1\), which contradicts the definition of the shift operation \(S_{ij}\).  Then this case cannot occur.

{\bf Case 4}: \(i\in T\) and \(j \notin T\). In this case, we have 
{\small 
\begin{equation}\label{MTHS2}
\begin{aligned}
&S_{ij}(\mathcal{A})=\left\{A \in \binom{[n]}{k} : |A \cap T| \geq t+r \right\}\\
&=\bigcup\limits_{V\subseteq T\setminus \{i\},\atop \min\{t+2r-1,k\} \geq |V|\geq t+r}\left\{A \in \binom{[n]\setminus \{i,j\}}{k} :  V\subseteq A \right\}\\
& \ \ \ \cup\bigcup\limits_{V\subseteq T\setminus \{i\},\atop \min\{t+2r-1,k-2\} \geq |V|\geq t+r-1}\left\{A \in \binom{[n]}{k} : \{i, j\}\cup V\subseteq A \right\}\\
& \ \ \ \cup\bigcup\limits_{V\subseteq T\setminus \{i\},\atop \min\{t+2r-1,k-1\} \geq |V|\geq t+r}\bigcup\limits_{A' \subseteq [n]\setminus (T\cup \{j\}),\atop  |A'|=k-|V|-1}\left\{A'\cup V \cup \{i\}, A'\cup V \cup \{j\} \right\}\\
& \ \ \ \cup\bigcup\limits_{V\subseteq T\setminus \{i\},\atop |V|=t+r-1} \left\{A'\cup V \cup \{i\} : A' \in \binom{[n]\setminus (T\cup \{j\})}{k-r-t} \right\}.
%&S_{ij}(\mathcal{B})=\left\{B \in \binom{[n]}{l} : |B \cap T| \geq t+r \right\}\\
%&=\bigcup\limits_{U\subseteq T\setminus \{i\},\atop t+2r-1 \geq |U|\geq t+r}\left\{B \in \binom{[n]\setminus \{i,j\}}{l} :  U\subseteq B \right\} \cup\bigcup\limits_{U\subseteq T\setminus \{i\},\atop t+2r-1 \geq |U|\geq t+r-1}\left\{B \in \binom{[n]}{l} : \{i, j\}\cup U\subseteq B \right\}\\
%& \ \ \ \cup\bigcup\limits_{U\subseteq T\setminus \{i\},\atop t+2r-1 \geq |U|\geq t+r}\bigcup\limits_{B' \subseteq [n]\setminus (T\cup \{j\}),\atop  |B'|=k-|U|-1}\left\{B'\cup U \cup \{i\}, B'\cup U \cup \{j\} \right\}\\
%& \ \ \ \cup\bigcup\limits_{U\subseteq T\setminus \{i\},\atop |U|=t+r-1} \left\{B'\cup U \cup \{i\} : B' \in \binom{[n]\setminus (T\cup \{j\})}{l-r-t} \right\}.
\end{aligned}
\end{equation}}
It is easy to see that for any set \(A_1\) with 
\[S_{ij}(A_1) \in S_{ij}(\mathcal{A})\setminus \left\{A'\cup V \cup \{i\} : A' \in \binom{[n]\setminus (T\cup \{j\})}{k-r-t}, V\subseteq T\setminus \{i\}, |V|=t+r-1 \right\},\]  by the definition of the shift operation \(S_{ij}\), we have \[A_1\in S_{ij}(\mathcal{A})\setminus \left\{A'\cup V \cup \{i\} : A' \in \binom{[n]\setminus (T\cup \{j\})}{k-r-t}, V\subseteq T\setminus \{i\}, |V|=t+r-1 \right\}.\]
Similarly, for \(S_{ij}(B_1) \in S_{ij}(\mathcal{B})\setminus \left\{B'\cup U \cup \{i\} : B' \in \binom{[m]\setminus (T\cup \{j\})}{l-r-t}, U\subseteq T\setminus \{i\}, |U|=t+r-1 \right\},\) we have 
\[B_1\in S_{ij}(\mathcal{B})\setminus \left\{B'\cup U \cup \{i\} : B' \in \binom{[m]\setminus (T\cup \{j\})}{l-r-t}, U\subseteq T\setminus \{i\}, |U|=t+r-1 \right\}.\]

Then we focus on the set \(A\) with \[S_{ij}(A) \in \left\{A'\cup V \cup \{i\} : A' \in \binom{[n]\setminus (T\cup \{j\})}{k-r-t}, V\subseteq T\setminus \{i\}, |V|=t+r-1 \right\}.\]
Based on the relationship between \(\mathcal{A}\) and \(S_{ij}(\mathcal{A})\), we divide the proof into three subcases.

{\bf Subcase 1}:\ \(\mathcal{A}=S_{ij}(\mathcal{A})\). In this subcase, we have  
\[\left\{A'\cup V \cup \{i\} : A' \in \binom{[n]\setminus (T\cup \{j\})}{k-r-t}, V\subseteq T\setminus \{i\}, |V|=t+r-1 \right\}\subset \mathcal{A}.\] 
Since \(|T\setminus \{i\}|=t+2r-1\), for any subset \(U\subset T\setminus \{i\}\) of size \(|U|=t+r-1\), there exists a subset \(V\subset T\setminus \{i\}\) of size \(|V|=t+r-1\) such that \(|V \cap U|=t-1\). Given that \(\min\{m, n\} \geq k+l-t+2\), for any subset \(B'\subset [m]\setminus (T\cup \{j\})\) with \(|B'|=l-r-t\), and for any \((k-r-t)\)-subset \(A'\subset [n]\setminus (T\cup \{j\}\cup B')\), it holds that 
\[|(A'\cup V \cup \{i\}) \cap (B'\cup U \cup \{j\})|=t-1. \]
This contradicts the property of the cross-$t$-intersecting families $\mathcal{A}$ and $\mathcal{B}$. Similar to (\ref{MTHS2}), we have 
\[\bigcup\limits_{U\subseteq T\setminus \{i\},\atop |U|=t+r-1} \left\{B'\cup U \cup \{i\} : B' \in \binom{[m]\setminus (T\cup \{j\})}{l-r-t} \right\}\subseteq S_{ij}( \mathcal{B}).\]
This implies that 
\[B'\cup U \cup \{j\}\notin \mathcal{B}\ \text{and}\ B'\cup U \cup \{i\}\in \mathcal{B}\ \text{ for}\ B' \in \binom{[m]\setminus (T\cup \{j\})}{l-r-t}, U\subseteq T\setminus \{i\}\ \text{and}\ |U|=t+r-1.\] 
Hence, we have \(\mathcal{B}=S_{ij}(\mathcal{B})\).

{\bf Subcase 2}:\ \(\mathcal{B}=S_{ij}(\mathcal{B})\). By an argument analogous to that in Subcase 1, we obtain \(\mathcal{A}=S_{ij}(\mathcal{A})\). 

{\bf Subcase 3}:\ \(\mathcal{A}\not=S_{ij}(\mathcal{A})\) and \(\mathcal{B}\not=S_{ij}(\mathcal{B})\). In this subcase, we have \(1\leq i< j\leq \min\{m,n\}\). % we prove that the following family is contained in \(\mathcal{B}\):
 %\[\left\{B'\cup U \cup \{j\} : B' \in \binom{[n]\setminus (T\cup \{j\})}{l-r-t}, U\subseteq T\setminus \{i\}, |U|=t+r-1 \right\} \subset \mathcal{B}.\]
By (\ref{MTHS2}), there exist a subset \(A_1 \in \binom{[n]\setminus (T\cup \{j\})}{k-r-t}\) and  a subset \(V_1\subseteq T\setminus \{i\}\) with \(|V_1|=t+r-1\) such that \(A_1\cup V_1 \cup \{j\}\in \mathcal{A}.\) For any fixed \((l-t-r)\)-subset \(B'\) of \([m]\setminus (T\cup \{j\})\), and any fixed \((t+r-1)\)-subset \(U_1\) of \(T\setminus \{i\}\), we proceed to show that
\[B'\cup U_1 \cup \{i\}\notin \mathcal{B}\ \text{ and}\ B'\cup U_1 \cup \{j\}\in \mathcal{B}.\]
Applying Lemma \ref{stl1} with \(A=A_1\) and \(B=B'\), there exist subsets \(A_2, \ldots, A_{w+1} \in \binom{[n]\setminus (T\cup \{j\})}{k-r-t}\) and \(B_1, B_2, \ldots, B_{w+1} \in \binom{[m]\setminus (T\cup \{j\})}{l-r-t}\) satisfying
\[ A_z \cap B_z=\emptyset, A_{z+1} \cap B_z=\emptyset\ \text {for}\ z=1,\ldots, w+1\ \text{and}\ B_{w+1}=B'.\]
For any subsets \(V_1, U_1\) of \(T\setminus \{i\}\), by Lemma \ref{stl2} with \(t'=t-1\), there exist \((r+t-1)\)-subsets \(V_2, V_3, \ldots, V_{2w'+2}\) of \(T\setminus \{i\}\) such that 
\[V_{2w'+2}=U_1\ \text{ and}\ |V_z\cap V_{z+1}|=t-1\ \text{for}\ z=1, \ldots, 2w'+1.\]
By (\ref{MTHS2}), we have
\begin{align*}
\{A\cup V \cup \{i\}, A\cup V \cup \{j\}\}\cap \mathcal{A}\not= \emptyset 
\end{align*}
for any subsets \(A \in \binom{[n]\setminus (T\cup \{j\})}{k-r-t}\), \(V \subseteq T\setminus \{i\}\) with \(|V|=t+r-1.\) Similarly, we have 
\begin{align*}
\{B\cup U \cup \{i\}, B\cup U \cup \{j\}\}\cap \mathcal{B}\not= \emptyset
\end{align*}
for any subsets \(B \in \binom{[m]\setminus (T\cup \{j\})}{l-r-t}\), \(U \subseteq T\setminus \{i\}\) with \(|U|=t+r-1.\) Since \(A_1\cup V_1 \cup \{j\}\in \mathcal{A}\), it follows that \(B_1\cup V_2 \cup \{i\}\notin \mathcal{B}\) because 
\[|(A_1\cup V_1 \cup \{j\})\cap (B_1\cup V_2 \cup \{i\}) |=|V_1 \cap V_2|=t-1.\]
This implies that \(B_1\cup V_2 \cup \{j\}\in \mathcal{B}\) since \(\mathcal{B}\) and \(\mathcal{A}\) are cross-\(t\)-intersecting. By an analogous argument, we can show the following: 
\begin{equation}\label{MTHS3}
\begin{aligned}
& \text{ (i)}\ B_z\cup V_{z'\pm 1} \cup \{i\}\notin \mathcal{B}\ \text{ and}\ B_z\cup V_{z'\pm 1} \cup \{j\}\in \mathcal{B}\ \text{ whenever}\ A_z\cup V_{z'} \cup \{j\}\in \mathcal{A};\\
& {\rm (ii)}\ A_{z+1}\cup V_{z'\pm 1} \cup \{i\}\notin \mathcal{A}\ \text{ and}\ A_{z+1}\cup V_{z'\pm 1} \cup \{j\}\in \mathcal{A}\ \text{ whenever}\ B_z\cup V_{z'} \cup \{j\}\in \mathcal{B}; \\
& {\rm (iii)}\ A_z\cup V_{z'\pm 1} \cup \{i\}\notin \mathcal{A}\ \text{ and}\ A_z\cup V_{z'\pm 1} \cup \{j\}\in \mathcal{A}\ \text{ whenever}\ B_z\cup V_{z'} \cup \{j\}\in \mathcal{B}.
\end{aligned}
\end{equation}
%\begin{itemize}
%\item[{\rm (i)}] \(B_u\cup V_{u'\pm 1} \cup \{i\}\notin \mathcal{B}\) and \(B_u\cup V_{u'\pm 1} \cup \{j\}\in \mathcal{B}\) whenever \(A_u\cup V_{u'} \cup \{j\}\in \mathcal{A}\);
%\item[{\rm (ii)}] \(A_{u+1}\cup V_{u'\pm 1} \cup \{i\}\notin \mathcal{A}\) and \(A_{u+1}\cup V_{u'\pm 1} \cup \{j\}\in \mathcal{A}\) whenever \(B_u\cup V_{u'} \cup \{j\}\in \mathcal{B}\);
%\item[{\rm (iii)}] \(A_u\cup V_{u'\pm 1} \cup \{i\}\notin \mathcal{A}\) and \(A_u\cup V_{u'\pm 1} \cup \{j\}\in \mathcal{A}\) whenever \(B_u\cup V_{u'} \cup \{j\}\in \mathcal{B}\).
%\end{itemize}
Construct the following \(\max\{2w+2, 2w'+2\}\) subsets:
\begin{align*}
&\text{if\ }w \geq w',\ \text{then\ set}
\begin{cases}
D_{2s-1}=A_s \cup V_{2s-1} \cup \{j\},\ & s=1,2,\ldots, w'+1,\\
D_{2s}=B_s \cup V_{2s} \cup \{j\},\ &s=1,2,\ldots, w'+1,\\
D_{2s-1}=A_s \cup V_{2w'+1} \cup \{j\},\ & s=w'+2, w'+3,\ldots, w+1,\\
D_{2s}=B_s \cup V_{2w'+2} \cup \{j\},\ & s=w'+2, w'+3,\ldots, w+1;
\end{cases}\\
&\text{if\ }w' > w,\ \text{then\ set}
\begin{cases}
D_{2s-1}=A_s \cup V_{2s-1} \cup \{j\},\ & s=1,2,\ldots, w+1,\\
D_{2s}=B_s \cup V_{2s} \cup \{j\},\ &s=1,2,\ldots, w+1,\\
D_{2s-1}=A_{w+1} \cup V_{2s-1} \cup \{j\},\ & s=w+2, w+3,\ldots, w'+1,\\
D_{2s}=B_{w+1} \cup V_{2s} \cup \{j\},\ & s=w+2, w+3,\ldots, w'+1.
\end{cases}
\end{align*}
By repeated application of property (\ref{MTHS3}), we obtain 
\[D_{2s-1}\in \mathcal{A}, D_{2s}\in \mathcal{B}, (D_{2s-1}\cup \{i\})\setminus \{j\}\notin \mathcal{A}\ \text{and}\  (D_{2s}\cup \{i\})\setminus \{j\}\in \mathcal{B}\]
for \(s=1,2,\ldots, \max\{w+1, w'+1\}\). Consequently, for any subset \(B' \in \binom{[m]\setminus (T\cup \{j\})}{l-r-t} \) and any subset \(U_1\subseteq T\setminus \{i\}\) with \(|U_1|=t+r-1\), it holds that  
\[B'\cup U_1 \cup \{i\}=(D_{\max\{2w+2, 2w'+2\}}\cup \{i\})\setminus \{j\} \notin \mathcal{B}\ \text{ and}\ B'\cup U_1 \cup \{j\}=D_{\max\{2w+2, 2w'+2\}} \in \mathcal{B}. \]
This implies that {\small
\begin{align*}
\mathcal{B}&=\bigcup\limits_{U\subseteq T\setminus \{i\},\atop \min\{t+2r-1,l-2\} \geq |U|\geq t+r}\left\{B \in \binom{[m]\setminus \{i, j\}}{l} :  U\subseteq B \right\}\\
& \ \ \ \cup\bigcup\limits_{U\subseteq T\setminus \{i\},\atop \min\{t+2r-1,l-2\} \geq |U|\geq t+r-1}\left\{B \in \binom{[m]}{l} : \{i, j\}\cup U\subseteq B \right\}\\
& \ \ \ \cup\bigcup\limits_{U\subseteq T\setminus \{i\},\atop \min\{t+2r-1,l-1\} \geq |U|\geq t+r}\bigcup\limits_{B' \subseteq [n]\setminus (T\cup \{j\}),\atop  |B'|=l-|U|-1}\left\{B'\cup U \cup \{i\}, B'\cup U \cup \{j\} \right\}
\end{align*} }
\begin{align*}
& \ \ \ \cup\bigcup\limits_{U\subseteq T\setminus \{i\},\atop |U|=t+r-1} \left\{B'\cup U \cup \{j\} : B' \in \binom{[m]\setminus (T\cup \{j\})}{l-r-t} \right\}\\
&=\left\{B \in \binom{[m]}{l} : |B \cap (T\cup\{j\}\setminus \{i\})| \geq t+r \right\}.
\end{align*} 
Hence, we have 
\[\mathcal{B}=\left\{B \in \binom{[m]}{l} : |B \cap T'| \geq t+r \right\}\ {\rm and}\  
\mathcal{A}=\left\{A \in \binom{[n]}{k} : |A \cap T'| \geq t+r \right\},\]
where \(T'=T\cup\{j\}\setminus \{i\}\). This completes the proof. \qed
\end{proof}

We now start to prove the main theorem.

\noindent \textbf{Proof of Theorem \ref{TM}}. By Fact \ref{lcl} and Theorem \ref{stl}, we may assume that \(\mathcal{A}\subseteq\binom{[n]}{k}\) and \(\mathcal{B}\subseteq\binom{[m]}{l}\) are two left-compressed cross-\(t\)-intersecting families, and we may also assume that the product \(|\mathcal{A}||\mathcal{B}|\) is maximum. Let \( g(\mathcal{A}) \in G_{*}(\mathcal{A}) \) and \( g(\mathcal{B}) \in G_{*}(\mathcal{B}) \) such that \( s := \max\{s^{+}(g(\mathcal{A})), s^{+}(g(\mathcal{B}))\}\) is minimal. By Theorem \ref{AK97}, we have
\[
|\mathcal{A}||\mathcal{B}| \geq \binom{n - t}{k - t}\binom{m - t}{l - t} \geq |\mathcal{F}(n, k, t, 1)||\mathcal{F}(m, l, t, 1)|.
\]
If \( s = t \), the cross-\(t\)-intersection property of \( g(\mathcal{A}) \) and \( g(\mathcal{B}) \) implies \( g(\mathcal{A}) = g(\mathcal{B}) = \{[t]\} \). Hence
\[
\mathcal{A} = \left\{ A \in \binom{[n]}{k} : [t] \subset A \right\}, \mathcal{B} = \left\{ B \in \binom{[m]}{l} : [t] \subset B \right\} \text{ and } |\mathcal{A}||\mathcal{B}| = \binom{n - t}{k - t}\binom{m - t}{l - t}.
\]
Therefore, for the remainder of the proof, we assume \( s \geq t + 1 \). Let \( i(t\leq i\leq n) \) be the smallest integer such that \( g_{i}^{*}(\mathcal{A}) \neq \emptyset \). Then, by (ii) of Lemma \ref{lemkl}, we have \( g_{s+t-i}^{*}(\mathcal{B}) \neq \emptyset \). 

We divide the proof into four cases based on the relationship between \(i, t\) and \(s\). 

{\bf Case 1}: \( i = t \). Then \( [s] \in g_{s+t-i}^{*}(\mathcal{B}) = g_{s}^{*}(\mathcal{B}) \). Since \( g(\mathcal{B}) \) is minimal (in the sense of set-theoretical inclusion), we have \( g(\mathcal{B}) = \{[s]\} \). By maximality of \( |\mathcal{A}||\mathcal{B}| \), we have  \( g(\mathcal{A}) = \binom{[s]}{t} \). Thus, 
\[
|\mathcal{A}| = \sum_{t \leq w \leq s} \binom{s}{w} \binom{n - s}{k - w} \text{ and } |\mathcal{B}| = \binom{m - s}{l - s}.
\]
Set \(\mathcal{A}_1 = \left\{A \in \binom{[n]}{k} : |A \cap [s-1]| \geq t\right\}\) and \(\mathcal{B}_1 = \left\{B \in \binom{[m]}{l} : [s-1] \subset B\right\}\). Clearly, \(\mathcal{A}_1\) and \(\mathcal{B}_1\) are cross-\(t\)-intersecting with
\[
|\mathcal{A}_1| = \sum_{t \leq w \leq s-1} \binom{s-1}{w} \binom{n-s+1}{k-w} \text{ and } |\mathcal{B}_1| = \binom{m-s+1}{l-s+1}.
\]
Then, we have
{\small \begin{align*}
&|\mathcal{A}| - |\mathcal{A}_1| =\sum\limits_{t \leq w \leq s} \binom{s}{w} \binom{n - s}{k - w}-\sum\limits_{t \leq w \leq s-1} \binom{s-1}{w} \binom{n-s+1}{k-w}\\
&=\sum\limits_{t \leq w \leq s-1} \left(\binom{s}{w} \binom{n - s}{k - w}-\binom{s-1}{w} \binom{n-s+1}{k-w}\right)+\binom{n - s}{k - s}\\
&=\sum\limits_{t \leq w \leq s-1} \left(\left(\binom{s-1}{w}+\binom{s-1}{w-1} \right)\binom{n - s}{k - w}-\binom{s-1}{w}\left( \binom{n-s}{k-w}+\binom{n-s}{k-w-1}\right)\right)+\binom{n - s}{k - s}\\
&=\sum\limits_{t \leq w \leq s-1} \left(\binom{s-1}{w-1} \binom{n - s}{k - w}-\binom{s-1}{w} \binom{n-s}{k-w-1}\right)+\binom{n - s}{k - s}\\
&=\binom{s-1}{t-1} \binom{n-s}{k-t}
\end{align*}}
and \(|\mathcal{B}_1| - |\mathcal{B}| = \binom{m-s}{l-s+1}\). It follows from the assumption \(m \geq (t+1)(k-t+1)\) and \(k\geq l\) that
\begin{align*}
s(m - l) - t(m - s + 1) &= (s - t)m - s(l - t) - t \geq (s - t)(t+1)(k-t+1) - s(l - t) - t \\
& = t(s - t - 1)(k - t + 1)+s(k-l)+s-t \geq 0.
\end{align*}
Therefore,
\begin{align*}
|\mathcal{A}_1||\mathcal{B}_1| - |\mathcal{A}||\mathcal{B}| &=\left(|\mathcal{A}|-\binom{s-1}{t-1} \binom{n-s}{k-t} \right)|\mathcal{B}_1|-|\mathcal{A}|\left(|\mathcal{B}_1|-\binom{m-s}{l-s+1}\right)\\
&=\binom{m-s}{l-s+1}\left(\sum_{t \leq w \leq s}\binom{s}{w} \binom{n - s}{k - w}\right)-\binom{s - 1}{t - 1}\binom{n - s}{k - t}\binom{m - s+1}{l - s+1}\\
%\end{align*}
%\begin{align*}
&>\binom{n - s}{k - t}\left(\binom{s}{t} \binom{m-s}{l-s+1} -\binom{s-1}{t-1} \binom{m-s+1}{l-s+1} \right)\\
&=\binom{n - s}{k - t}\binom{s-1}{t-1} \binom{m-s}{l-s+1}  \left(\frac{s(m-l)-t(m-s+1)}{t(m-l)}\right)\geq 0,
\end{align*}
contradicting the maximality of \(|\mathcal{A}||\mathcal{B}|\). 

{\bf Case 2}: \(i=s\). Since \( g(\mathcal{A}) \) is minimal (in the sense of set-theoretical inclusion), we have \( g(\mathcal{A}) = \{[s]\} \). By maximality of \( |\mathcal{A}||\mathcal{B}| \), we have  \( g(\mathcal{B}) = \binom{[s]}{t} \). Thus, 
\[
|\mathcal{A}| = \binom{n - s}{k - s} \text{ and } |\mathcal{B}| = \sum_{t \leq w \leq s} \binom{s}{w} \binom{m - s}{l - w}.
\]
Set \(\mathcal{A}_1 =\left\{A \in \binom{[n]}{k} : [s-1] \subset A\right\}\) and \(\mathcal{B}_1 = \left\{B \in \binom{[m]}{l} : |A \cap [s-1]| \geq t\right\} \). Clearly, \(\mathcal{A}_1\) and \(\mathcal{B}_1\) are cross-\(t\)-intersecting with
\[
|\mathcal{A}_1| = \binom{n-s+1}{k-s+1} \text{ and } |\mathcal{B}_1| = \sum_{t \leq w \leq s-1} \binom{s-1}{w} \binom{m-s+1}{l-w} .
\]
By a similar argument as in Case 1, we obtain 
\begin{align*}
|\mathcal{A}_1||\mathcal{B}_1| - |\mathcal{A}||\mathcal{B}|> 0,
\end{align*}
contradicting the maximality of \(|\mathcal{A}||\mathcal{B}|\). Therefore, for the remainder of the proof, we assume \( s-1\geq i \geq t + 1 \).

%{\bf Case 3}: \( s = t + 1. \) In this case, we have \( i = t + 1=s \), which leads to a contradiction by an argument similar to Case 2.  
%, \( g_{t+1}^{*}(\mathcal{A}) \neq \emptyset \) and \( g_{t}^{*}(\mathcal{B}) \neq \emptyset \). Since \( s = t + 1 \), we have \([t+1] \in g(\mathcal{A})\). By maximality of \( |\mathcal{A}||\mathcal{B}| \), we have  \( g(\mathcal{B}) = \binom{[t+1]}{t} \). Thus, 
%\[
%\mathcal{A} = \left\{A \in \binom{[n]}{k} : [t+1] \subset A \right\} \text{ and}\ 
%\mathcal{B} = \left\{B \in \binom{[m]}{l} : B \cap [t+1]| \geq t\right\}.
%\]
% It follows that
%\[
%|\mathcal{A}||\mathcal{B}| = \binom{n-t-1}{k-t-1}\left( (t+1)\binom{m-t-1}{l-t}+\binom{m-t-1}{l-t-1} \right).
%\]
% Since \( n \geq (t+1)(k-t+1) \) and \( l \geq t+1 \), we have 
%\begin{align*}
%\vspace{0.1cm}&(n-t)(m-t)-(k-t)(m(t+1)-t(l+1))\\
%\vspace{0.1cm}&=(m-t)(n-(t+1)(k-t)-t))+(k-t)t(l-t)\\
%\vspace{0.1cm}&> 0.
%\end{align*}
%It follows that 
%\begin{align*}
%\vspace{0.1cm}|\mathcal{A}||\mathcal{B}| &= \binom{n-t-1}{k-t-1}\left( (t+1)\binom{m-t-1}{l-t}+\binom{m-t-1}{l-t-1} \right)\\
%\vspace{0.1cm}&=\binom{n-t}{k-t}\binom{m-t}{l-t}\left( \frac{(k-t)(m(t+1)-t(l+1))}{(n-t)(m-t)} \right)\\
%&< \binom{n-t}{k-t}\binom{n-t}{l-t},
%\end{align*}
%yielding a contradiction. 

{\bf Case 3}: \( s = t + 2 \). In this case, we have \( i =t + 1 \). Then we have \( g_{t+1}^{*}(\mathcal{A}) \neq \emptyset \) and \( g_{t+1}^{*}(\mathcal{B}) \neq \emptyset \). If \([t] \in g(\mathcal{A}) \cap g(\mathcal{B})\), then \( g(\mathcal{A}) = g(\mathcal{B}) = \{[t]\}\), which implies \( s = s^{+}(g(\mathcal{A})) = s^{+}(g(\mathcal{B})) = t \), yielding a contradiction. Therefore, we have \([t] \not\in g(\mathcal{A})\) or \([t] \not\in g(\mathcal{B})\).  We divide the proof into three subcases based on the relationship between \([t]\) and \(g(\mathcal{A}), g(\mathcal{B})\). 

{\bf Subcase 1}:\ If \([t] \not\in g(\mathcal{A})\) and \([t]\in g(\mathcal{B})\),  by maximality of $|\mathcal{A}||\mathcal{B}|$, we have
\[
g(\mathcal{B}) = \{[t]\} \cup \{[t+2] \setminus \{i\} : 1 \leq i \leq t\} \text{ and } g(\mathcal{A}) = \{[t+1], [t+2] \setminus \{t+1\}\}.
\]
Then we have 
\[
\mathcal{B} = \left\{B \in \binom{[m]}{l} : [t] \subset B \text{ or } |B \cap [t+2]| \geq t+1\right\} \text{ and}
\]
\[
\mathcal{A} = \left\{A \in \binom{[n]}{k} : [t] \subset A \text{ and } |A \cap [t+2]| \geq t+1\right\}.
\]
It follows that 
\[|\mathcal{A}||\mathcal{B}| = \left( \binom{n-t}{k-t} - \binom{n-t-2}{k-t} \right)\left( \binom{m-t}{l-t} + t \binom{m-t-2}{l-t-1} \right) .\]
We have 
\begin{align*}
\vspace{0.1cm}&t\binom{m-t-2}{l-t-1}\binom{n-t}{k-t}-\binom{m-t}{l-t}\binom{n-t-2}{k-t}-t\binom{m-t-2}{l-t-1}\binom{n-t-2}{k-t}\\
\vspace{0.1cm}&=t\binom{m-t-2}{l-t-1}\binom{n-t}{k-t}
\left(1-\frac{(m-t)(m-t-1)(n-k)(n-k-1)}{t(l-t)(m-l)(n-t)(n-t-1)}-\frac{(n-k)(n-k-1)}{(n-t)(n-t-1)} \right)\\
\vspace{0.1cm}&\leq t\binom{m-t-2}{l-t-1}\binom{n-t}{k-t}
\left(1-\frac{(m-t)(m-t-1)(tk-t^2+1)}{(t+1)(l-t)(m-l)((t+1)(k-t)+1)}-\frac{t(tk-t^2+1)}{(t+1)((t+1)(k-t)+1)} \right)\\
%{0.1cm}&\leq t\binom{m-t-2}{l-t-1}\binom{n-t}{k-t} \left(1-\frac{(m-t)(m-t-1)(tl-t^2+1)}{(t+1)(l-t)(m-l)((t+1)(l-t)+1)}-\frac{t(tl-t^2+1)}{(t+1)((t+1)(l-t)+1)} \right)\\
\vspace{0.1cm}&\leq t\binom{m-t-2}{l-t-1}\binom{n-t}{k-t}
\left(1-\frac{(m-t)(m-t-1)t}{(t+1)(l-t)(m-l)(t+1)}-\frac{t^2}{(t+1)^2} \right)\\
\vspace{0.1cm}&= t\binom{m-t-2}{l-t-1}\binom{n-t}{k-t}
\left(\frac{(2t+1)(l-t)(m-l)-(m-t)(m-t-1)t}{(t+1)^2(l-t)(m-l)} \right),
\end{align*}
where the second inequality follows from 
\[ \frac{n-k}{n-t}\geq \frac{(t+1)(k-t+1)-k}{(t+1)(k-t+1)-t}=\frac{tk-t^2+1}{(t+1)(k-t)+1}\]
and
\[ \frac{n-k-1}{n-t-1}\geq  \frac{(t+1)(k-t+1)-k-1}{(t+1)(k-t+1)-t-1}=\frac{t}{t+1}, \]
the third inequality follows from \(\frac{tk-t^2+1}{(t+1)(k-t)+1}> \frac{t}{t+1}.\) 

Set \( \psi_1(m, l, t)=(2t+1)(l-t)(m-l)-t(m-t)(m-t-1). \) Since \( m \geq (t+1)(k-t+1) \) and \(k\geq l\), we have 
\begin{align*}
\frac{\partial \psi_1}{\partial m} &=(2t+1)(l-t)-t(2m-2t-1)\\
&\leq (2t+1)(l-t)-t(2(t+1)(k-t+1)-2t-1)\\
&= -(2t+1)(k-l)-t-(k-t)(2t^2-1)\\
&<0,
\end{align*}
so \( \psi_1(m, l, t)\) is a decreasing function in \( m\). Then we have 
\begin{align*}
\psi_1(m, l, t) & \leq  \psi_1((t+1)(k-t+1), l, t)\\
&=\left((2t+1)(l-t)((t+1)(k-t+1)-l)-((t+1)(k-t)+1)(k-t)t(t+1)\right)\\
&=(l-t)((t+1)(k-t+1)-l)\left(2t+1-t(t+1)\cdot \frac{k-t}{l-t}\cdot \frac{(t+1)(k-t)+1}{(t+1)(k-t+1)-l}\right)\\
&\leq (l-t)((t+1)(k-t+1)-l)\left(2t+1-t(t+1)\right)\\
&< 0,\  
\end{align*}
where the penultimate inequality follows from \(k>l\) and \((t+1)(k-t)+1> (t+1)(k-t+1)-l.\)
This implies that
\begin{align*}
t\binom{m-t-2}{l-t-1}\binom{n-t}{k-t}<\binom{m-t}{l-t}\binom{n-t-2}{k-t}+t\binom{m-t-2}{l-t-1}\binom{n-t-2}{k-t}. 
\end{align*}
It follows that 
\begin{align*}
|\mathcal{A}||\mathcal{B}| = \left( \binom{n-t}{k-t} - \binom{n-t-2}{k-t} \right)\left( \binom{m-t}{l-t} + t \binom{m-t-2}{l-t-1} \right)< \binom{n-t}{k-t}\binom{m-t}{l-t},
\end{align*}
yielding a contradiction.

{\bf Subcase 2}:\ If \([t] \in g(\mathcal{A})\) and \([t] \not\in g(\mathcal{B})\),  by maximality of $|\mathcal{A}||\mathcal{B}|$, we have
\[
g(\mathcal{A}) = \{[t]\} \cup \{[t+2] \setminus \{i\} : 1 \leq i \leq t\} \text{ and } g(\mathcal{B}) = \{[t+1], [t+2] \setminus \{t+1\}\}.
\]
It follows that 
\begin{align*}
|\mathcal{A}||\mathcal{B}| = \left( \binom{n-t}{k-t}+t \binom{n-t-2}{k-t-1} \right)\left( \binom{m-t}{l-t}- \binom{m-t-2}{l-t} \right).
\end{align*}
We have 
\begin{align*}
\vspace{0.1cm}&t\binom{n-t-2}{k-t-1}\binom{m-t}{l-t}-\binom{n-t}{k-t}\binom{m-t-2}{l-t}-t\binom{n-t-2}{k-t-1}\binom{m-t-2}{l-t}\\
\vspace{0.1cm}&=t\binom{n-t-2}{k-t-1}\binom{m-t}{l-t}
\left(1-\frac{(n-t)(n-t-1)(m-l)(m-l-1)}{t(k-t)(n-k)(m-t)(m-t-1)}-\frac{(m-l)(m-l-1)}{(m-t)(m-t-1)} \right)\\
\vspace{0.1cm}&\leq t\binom{n-t-2}{k-t-1}\binom{m-t}{l-t}
\left(1-\frac{(n-t)(n-t-1)(tk-t^2+1)}{(t+1)(k-t)(n-k)((t+1)(k-t)+1)}-\frac{t(tk-t^2+1)}{(t+1)((t+1)(k-t)+1)} \right)\\
%{0.1cm}&\leq t\binom{m-t-2}{l-t-1}\binom{n-t}{k-t} \left(1-\frac{(m-t)(m-t-1)(tl-t^2+1)}{(t+1)(l-t)(m-l)((t+1)(l-t)+1)}-\frac{t(tl-t^2+1)}{(t+1)((t+1)(l-t)+1)} \right)\\
\vspace{0.1cm}&\leq t\binom{n-t-2}{k-t-1}\binom{m-t}{l-t}
\left(1-\frac{(n-t)(n-t-1)t}{(t+1)(k-t)(n-k)(t+1)}-\frac{t^2}{(t+1)^2} \right)\\
\vspace{0.1cm}&= t\binom{n-t-2}{k-t-1}\binom{m-t}{l-t}
\left(\frac{(2t+1)(k-t)(n-k)-(n-t)(n-t-1)t}{(t+1)^2(k-t)(n-k)} \right),
\end{align*}
where the second inequality follows from 
\[ \frac{m-l}{m-t}\geq \frac{(t+1)(k-t+1)-k}{(t+1)(k-t+1)-t}=\frac{tk-t^2+1}{(t+1)(k-t)+1}\]
and
\[ \frac{m-l-1}{m-t-1}\geq  \frac{(t+1)(k-t+1)-k-1}{(t+1)(k-t+1)-t-1}=\frac{t}{t+1}, \]
the third inequality follows from \(\frac{tk-t^2+1}{(t+1)(k-t)+1}> \frac{t}{t+1}.\) 

Set \( \psi_2(n, k, t)=(2t+1)(k-t)(n-k)-t(n-t)(n-t-1). \) Since \( n \geq (t+1)(k-t+1) \), we have 
\begin{align*}
\frac{\partial \psi_2}{\partial n} &=(2t+1)(k-t)-t(2n-2t-1)\\
&\leq (2t+1)(k-t)-t(2(t+1)(k-t+1)-2t-1)\\
&= -t-(k-t)(2t^2-1)\\
&<0,
\end{align*}
so \( \psi_2(n, k, t)\) is a decreasing function in \(n\). Then we have 
\begin{align*}
\psi_2(n, k, t) & \leq  \psi_1((t+1)(k-t+1), k, t)\\
&=\left((2t+1)(k-t)((t+1)(k-t+1)-k)-((t+1)(k-t)+1)(k-t)t(t+1)\right)\\
&=(k-t)(tk-t^2+1)\left(2t+1-t(t+1)\cdot \frac{(t+1)(k-t)+1}{t(k-t)+1}\right)\\
&\leq (k-t)(tk-t^2+1)\left(2t+1-t(t+1)\right)\\
&< 0. 
\end{align*}
This implies that
\begin{align*}
t\binom{n-t-2}{k-t-1}\binom{m-t}{l-t}< \binom{n-t}{k-t}\binom{m-t-2}{l-t}+t\binom{n-t-2}{k-t-1}\binom{m-t-2}{l-t}.
\end{align*}
It follows that 
\begin{align*}
|\mathcal{A}||\mathcal{B}| = \left( \binom{n-t}{k-t}+t \binom{n-t-2}{k-t-1} \right)\left( \binom{m-t}{l-t}- \binom{m-t-2}{l-t} \right)< \binom{n-t}{k-t}\binom{m-t}{l-t}, 
\end{align*}
yielding a contradiction.

{\bf Subcase 3}:\ If \([t] \not\in g(\mathcal{A})\) and \([t] \not\in g(\mathcal{B})\), by (i) of Lemma \ref{lemkl}, we have \(|F| \geq t+1\) for all \(F \in g(\mathcal{A}) \cup g(\mathcal{B})\). It follows from the maximality of $|\mathcal{A}||\mathcal{B}|$ that 
\[
g(\mathcal{A}) = \binom{[t+2]}{t+1}\ \text{and}\ g(\mathcal{B}) = \binom{[t+2]}{t+1}.
\]
Hence,
\[
\mathcal{A} = \mathcal{F}(n, k, t, 1) = \left\{A \in \binom{[n]}{k} : A \cap [t+2]| \geq t+1\right\} \text{ and} 
\] 
\[
\mathcal{B} = \mathcal{F}(n, l, t, 1) = \left\{B \in \binom{[m]}{l} : B \cap [t+2]| \geq t+1\right\}.
\]  
If \(\min\{m, n\} > (t+1)(k-t+1)\), by Theorem \ref{AK97}, we have \(\binom{n - t}{k - t}\binom{m - t}{l - t}> |\mathcal{F}(n, k, t, 1)||\mathcal{F}(m, l, t, 1)|,\) yielding a contradiction. 
Hence, the maximality of \(|\mathcal{A}||\mathcal{B}|\) implies that \(m=n = (t+1)(k-t+1)\) and \( k=l\). 

%{\bf Subcase 2}: If \( i = t +2 \), then \( g_{t+2}^{*}(\mathcal{A}) \neq \emptyset \) and \( g_{t}^{*}(\mathcal{B}) \neq \emptyset \). Since \( s = t + 2 \), we have \([t+2] \in g(\mathcal{A})\). Then, \( g(\mathcal{B}) = \binom{[t+2]}{t} \) by the assumption that \( |\mathcal{A}||\mathcal{B}| \) is maximum. Hence,
%\[
%\mathcal{A} = \left\{A \in \binom{[n]}{k} : [t+2] \subset A \right\} \text{ and}\ \mathcal{B} = \left\{B \in \binom{[n]}{l} : B \cap [t+2]| \geq t \right\}.
%\]
%By a similar argument as in Case 1, we obtain 
%\begin{align*}
%|\mathcal{A}||\mathcal{B}| &= \binom{n-t-2}{k-t-2}\left(\binom{t+2}{t}\binom{m-t-2}{l-t}+\binom{t+2}{t+1}\binom{m-t-2}{l-t-1}+\binom{m-t-2}{l-t-2} \right)\\
%&< \binom{n-t}{k-t}\binom{m-t}{l-t},
%\end{align*}
%yielding a contradiction. 

{\bf Case 4}: \( s \geq t + 3 \). Then, by (ii) of Lemma \ref{lemkl}, we have \( g_{s+t-i}^{*}(\mathcal{B}) \neq \emptyset \). Set \(\mathcal{A}_1 = \mathcal{A} \cup \mathcal{D}(g_{i}^{*}(\mathcal{A})')\) and \(\mathcal{B}_1 = \mathcal{B} \setminus \mathcal{D}(g_{s+t-i}^{*}(\mathcal{B}))\). According to (iii) of Lemma \ref{lemkl}, \(\mathcal{A}_1\) and \(\mathcal{B}_1\) are cross-\(t\)-intersecting, and
\[
|\mathcal{A}_1||\mathcal{B}_1| = \left( |\mathcal{A}| + |g_{i}^{*}(\mathcal{A})| \binom{n-s}{k-i+1} \right) \left( |\mathcal{B}| - |g_{s+t-i}^{*}(\mathcal{B})| \binom{m-s}{l-s-t+i} \right) \leq |\mathcal{A}||\mathcal{B}|,
\]
where the inequality holds by the maximality of \(|\mathcal{A}||\mathcal{B}|\). This implies that 
\begin{equation}\label{gs11}
\frac{|\mathcal{B}|}{|\mathcal{A}| + |g_{i}^{*}(\mathcal{A})| \binom{n-s}{k-i+1}}  \leq \frac{|g_{s+t-i}^{*}(\mathcal{B})| \binom{m-s}{l-s-t+i}}{|g_{i}^{*}(\mathcal{A})| \binom{n-s}{k-i+1}}
\end{equation}
and
\begin{equation}\label{gs12}
\frac{|\mathcal{B}|}{|g_{s+t-i}^{*}(\mathcal{B})|} \binom{n-s}{k-i+1} \leq \frac{|\mathcal{A}| }{|g_{i}^{*}(\mathcal{A})|}\binom{m-s}{l-s-t+i}+\binom{n-s}{k-i+1}\binom{m-s}{l-s-t+i}.
\end{equation}
Set \(\mathcal{A}_2 = \mathcal{A} \setminus \mathcal{D}(g_{i}^{*}(\mathcal{A}))\) and \(\mathcal{B}_2 = \mathcal{B} \cup \mathcal{D}(g_{s+t-i}^{*}(\mathcal{B})')\). Similarly, \(\mathcal{A}_2\) and \(\mathcal{B}_2\) are also cross-\(t\)-intersecting families with
\[
|\mathcal{A}_2||\mathcal{B}_2| = \left( |\mathcal{A}| - |g_{i}^{*}(\mathcal{A})| \binom{n-s}{k-i} \right) \left( |\mathcal{B}| + |g_{s+t-i}^{*}(\mathcal{B})| \binom{m-s}{l-s-t+i+1} \right) \leq |\mathcal{A}||\mathcal{B}|,
\]
and then it holds that 
\begin{equation}\label{gs21}
\frac{|\mathcal{A}|}{|\mathcal{B}| + |g_{s+t-i}^{*}(\mathcal{B})| \binom{m-s}{l-s-t+i+1}}  \leq \frac{|g_{i}^{*}(\mathcal{A})| \binom{n-s}{k-i}}{|g_{s+t-i}^{*}(\mathcal{B})| \binom{m-s}{l-s-t+i+1}}
\end{equation}
and
\begin{equation}\label{gs22}
\frac{|\mathcal{A}| }{|g_{i}^{*}(\mathcal{A})|}\binom{m-s}{l-s-t+i+1}\leq \frac{|\mathcal{B}|}{|g_{s+t-i}^{*}(\mathcal{B})|} \binom{n-s}{k-i} +\binom{n-s}{k-i}\binom{m-s}{l-s-t+i+1}.
\end{equation}

Combining  (\ref{gs11}) and (\ref{gs21}), we have
\begin{equation}\label{gs3}
\frac{|\mathcal{A}|}{|\mathcal{A}| + |g_{i}^{*}(\mathcal{A})| \binom{n-s}{k-i+1}} \cdot \frac{|\mathcal{B}|}{|\mathcal{B}| + |g_{s+t-i}^{*}(\mathcal{B})| \binom{m-s}{l-s-t+i+1}}  \leq \frac{\binom{m-s}{l-s-t+i} \binom{n-s}{k-i}}{\binom{n-s}{k-i+1} \binom{m-s}{l-s-t+i+1}}.
\end{equation}

Set \(\nabla(g_{i}^{*}(\mathcal{A})')=\left\{F \in\binom{[s - 1]}{i}: E \subset F\text{ for some }E \in g_{i}^{*}(\mathcal{A})' \right\}\). Since \(\mathcal{A}\) is left-compressed, it follows that \( \mathcal{D}(\nabla(g_{i}^{*}(\mathcal{A})')) \subseteq \mathcal{A}\). By Theorem \ref{NMP}, we have 
\[\frac{|\nabla(g_{i}^{*}(\mathcal{\mathcal{A}})')|}{|g_{i}^{*}(\mathcal{A})|}=\frac{|\nabla(g_{i}^{*}(\mathcal{A})')|}{|g_{i}^{*}(\mathcal{A})'|} \geq \frac{\binom{s - 1}{i}}{\binom{s - 1}{i - 1}},\]
and then we obtain
\begin{equation}\label{gs6}
|\mathcal{A}| \geq \mathcal{D}(\nabla(g_{i}^{*}(\mathcal{A})') \cup g_{i}^{*}(\mathcal{A})) \geq \frac{|g_{i}^{*}(\mathcal{A})|}{\binom{s - 1}{i - 1}}\left(\binom{s - 1}{i - 1}\binom{n - s}{k - i}+\binom{s - 1}{i}\binom{n - s + 1}{k - i}\right).
\end{equation}
Hence, 

\begin{equation}\label{gs7}
\frac{|\mathcal{A}|}{|\mathcal{A}| + |g_{i}^{*}(\mathcal{A})| \binom{n-s}{k-i+1}}\geq
\frac{\binom{s - 1}{i - 1}\binom{n - s}{k - i}+\binom{s - 1}{i}\binom{n - s + 1}{k - i}}{\binom{s - 1}{i - 1}\binom{n - s+1}{k - i+1}+\binom{s - 1}{i}\binom{n - s + 1}{k - i}}=\frac{(k-i+1)S(n,s,i,k)}{(n-s+1)T(n,s,i,k)}.
\end{equation}
Similarly, we have
\begin{equation}\label{gs8} 
|\mathcal{B}| \geq \frac{|g_{s+t-i}^{*}(\mathcal{B})|}{\binom{s - 1}{s+t-i - 1}}\left(\binom{s - 1}{s+t-i - 1}\binom{m - s}{l-s-t +i}+\binom{s - 1}{s+t-i}\binom{m - s + 1}{l- s-t +i}\right)
\end{equation}
and then it holds that
\begin{equation}\label{gs9} 
\begin{aligned}
\vspace{2cm}\frac{|\mathcal{B}|}{|\mathcal{B}| + |g_{s+t-i}^{*}(\mathcal{B})| \binom{m-s}{l-s-t+i+1}}  & \geq \frac{\binom{s - 1}{s+t-i - 1}\binom{m - s}{l-s-t +i}+\binom{s - 1}{s+t-i}\binom{m - s + 1}{l- s-t +i}}{\binom{s - 1}{s+t-i - 1}\binom{m - s+1}{l-s-t +i+1}+\binom{s - 1}{s+t-i}\binom{m - s + 1}{l- s-t +i}}\\
&=\frac{(l-s-t+i+1)S(m,s,s+t-i,l)}{(m-s+1)T(m,s,s+t-i,l)}.
\end{aligned}
\end{equation}
Combining (\ref{gs3}), (\ref{gs7}) and (\ref{gs9}), we have 
\begin{align*}
\frac{\binom{m-s}{l-s-t+i} \binom{n-s}{k-i}}{\binom{n-s}{k-i+1} \binom{m-s}{l-s-t+i+1}} &\geq \frac{|\mathcal{A}|}{|\mathcal{A}| + |g_{i}^{*}(\mathcal{A})| \binom{n-s}{k-i+1}} \frac{|\mathcal{B}|}{|\mathcal{B}| + |g_{s+t-i}^{*}(\mathcal{B})| \binom{m-s}{l-s-t+i+1}}\\
&\geq \frac{(l-s-t+i+1)(k-i+1)S(n,s,i,k)S(m,s,s+t-i,l)}{(m-s+1)(n-s+1)T(n,s,i,k)T(m,s,s+t-i,l)}.
\end{align*}

We divide the proof into three subcases based on the relationship between \(s, t\) and \(i\).

{\bf Subcase 1}: \(s-2\geq i\geq t+2.\) By Lemma \ref{IM1}, we have
\begin{align*}
\frac{\binom{m-s}{l-s-t+i} \binom{n-s}{k-i}}{\binom{n-s}{k-i+1} \binom{m-s}{l-s-t+i+1}} < \frac{(l-s-t+i+1)(k-i+1)S(n,s,i,k)S(m,s,s+t-i,l)}{(m-s+1)(n-s+1)T(n,s,i,k)T(m,s,s+t-i,l)},
\end{align*}
yielding a contradiction.  

{\bf Subcase 2}: \(i=s-1.\) Since \(k\geq l, k\geq i\geq t+1\) and \(\min\{m, n\}\geq (t+1)(k-t+1) \), we have 
\[
(m-l+t-i)-(k-i+1)=m-l-k+t-1\geq (t-1)(k-t)+(k-l)> 0,\ {\rm and} 
\]
\[
(n-s-k+i)-(l-s-t+i+1)=n-l-k+t-1\geq(k-l)+(t-1)(k-t)>0. 
\]
This implies that 
\[(m-l+t-i)(n-s-k+i)-(k-i+1)(l-s-t+i+1)>0.\]
Multiplying (\ref{gs22}) by \(\frac{l-s-t+i+1}{m-l+t-i}\) and adding it to (\ref{gs12}) yields
\begin{equation}\label{gs05}
\frac{|\mathcal{B}|}{|g_{s+t-i}^{*}(\mathcal{B})|}\leq \binom{m-s}{l-t-s+i}\frac{(n-s+1)(m-l+t-i)}{(m-l+t-i)(n-s-k+i)-(k-i+1)(l-s-t+i+1)}.
\end{equation}
Combining  (\ref{gs05}) and (\ref{gs8}), we have 
\begin{align*}
&\ \ 1+\frac{(i-t)(m-s+1)}{(s+t-i)(m-l+t-i+1)}\\
&\leq \frac{|\mathcal{B}|}{|g_{s+t-i}^{*}(\mathcal{B})|}\cdot \binom{m-s}{l-t-s+i}^{-1}\\
&\leq \frac{(n-s+1)(m-l+t-i)}{(m-l+t-i)(n-s-k+i)-(k-i+1)(l-s-t+i+1)},
\end{align*}
which is equivalent to
\begin{equation}\label{gs91} 
(i-t)\big((m-l+t-i)(n-s-k+i)-(k-i+1)(l-s-t+i+1)\big)\leq (s+t-i)(k-i+1)(m-l+t-i+1).
\end{equation}
Substitute \(i=s-1\) into the above equation, we have 
\[(t+1)(k-s+2)(m-l+t-s+2)-(s-1-t)\big((m-l+t-s+1)(n-k-1)-(k-s+2)(l-t)\big)\geq 0,\]
a contradiction with Lemma \ref{IM2}. 

{\bf Subcase 3}: \(i=t+1.\) Similar to (\ref{gs05}), multiplying (\ref{gs12}) by \(\frac{k-i+1}{n-s-k+i}\) and adding it to (\ref{gs22}) yields
\begin{equation}\label{gs040}
\frac{|\mathcal{A}| }{|g_{i}^{*}(\mathcal{A})|}\leq \binom{n-s}{k-i}\frac{(m-s+1)(n-s-k+i)}{(m-l+t-i)(n-s-k+i)-(k-i+1)(l-s-t+i+1)}.
\end{equation}
Combining (\ref{gs040}) and (\ref{gs6}), we have
\begin{align*}
&1+\frac{(s-i)(n-s+1)}{i(n-s-k+i+1)}\\
&\leq \frac{|\mathcal{A}| }{|g_{i}^{*}(\mathcal{A})|}\cdot \binom{n-s}{k-i}^{-1}\\
&\leq \frac{(m-s+1)(n-s-k+i)}{(m-l+t-i)(n-s-k+i)-(k-i+1)(l-s-t+i+1)},
\end{align*}
which is equivalent to
\begin{equation}\label{gs81} 
(s-i)\big((m-l+t-i)(n-s-k+i)-(k-i+1)(l-s-t+i+1)\big)\leq i(n-s-k+i+1)(l-s-t+i+1).
\end{equation}
Substitute \(i=t+1\) into the above equation, we have 
\[(t+1)(n-s-k+t+2)(l-s+2)-(s-1-t)\big((m-l-1)(n-s-k+t+1)-(k-t)(l-s+2)\big)\geq 0,\]
a contradiction with Lemma \ref{IM3}. This completes the proof. \qed

\section{Acknowledgment}

The first author was supported by National Natural Science Foundation of China Grant no. 12471313. The second author was supported by National Natural Science Foundation of China Grant no. 12271390.


\begin{thebibliography}{Z} \baselineskip 11pt

\bibitem{AK1996}
R. Ahlswede, L.H. Khachatrian, The complete nontrivial-intersection theorem for systems of finite sets, \emph{J. Comb. Theory, Ser. A} \textbf{76} (1996) 121-138.

\bibitem{AK1997}
R. Ahlswede, L.H. Khachatrian, The complete intersection theorem for systems of finite sets, \emph{Eur. J. Comb.} \textbf{18}(2) (1997) 125-136.

\bibitem{B2014}
P. Borg, The maximum product of sizes of cross-\(t\)-intersecting uniform families, \emph{Australas. J. Comb.} \textbf{60}(1) (2014) 69-78.

\bibitem{B2016}
P. Borg, The maximum product of weights of cross-intersecting families, \emph{J. Lond. Math. Soc.} \textbf{94} (2016) 993-1018.

\bibitem{CLWZ2025}
Y. Chen, A. Li, B. Wu and H. Zhang, On cross-$2$-intersecting families, preprint (2025). arXiv:2503.15971.

\bibitem{EKR1961}
P. Erd\H{o}s, C. Ko, R. Rado, Intersection theorems for systems of finite sets, \emph{Q. J. Math.} \textbf{2} (1961) 313-320.

\bibitem{F1978}
P. Frankl, The Erd\H{o}s-Ko-Rado theorem is true for $n = ckt$, In: \emph{Combinatorics, Proc. Fifth Hungarian Colloq., vol. I}, Keszthely, 1976, Colloq. Math. Soc. János Bolyai, vol. 18, North-Holland, Amsterdam-New York, 1978, pp. 365-375.

\bibitem{F1987}
P. Frankl, The shifting technique in extremal set theory, In: C. Whitehead (Ed.), \emph{Surveys in Combinatorics}, LMS Lecture Note Series, vol. 123, Cambridge Univ. Press, 1987, pp. 81-110.

\bibitem{FF1986}
P. Frankl, Z. Füredi, Non-trivial intersecting families, \emph{J. Comb. Theory, Ser. A} \textbf{41} (1986) 150-153.

\bibitem{FLST2014}
P. Frankl, S.J. Lee, M. Siggers, N. Tokushige, An Erd\H{o}s-Ko-Rado theorem for cross $t$-intersecting families,
 \emph{J. Comb. Theory, Ser. A} \textbf{128} (2014) 207-249.
 

\bibitem{G2010}
M. Gromov, Singularities, expanders and topology of maps. Part 2: from combinatorics to topology via algebraic isoperimetry, \emph{Geom. Funct. Anal.} \textbf{20} (2010) 416-526.

\bibitem{HLWZ2026}
D. He, A. Li, B. Wu, H. Zhang, On nontrivial cross-\(t\)-intersecting families, \emph{J. Comb. Theory, Ser. A} \textbf{217} (2026) 106095.

\bibitem{MT1989}
M. Matsumoto, N. Tokushige, The exact bound in the Erd\H{o}s-Ko-Rado theorem for cross-intersecting families, \emph{J. Comb. Theory, Ser. A} \textbf{52} (1989) 90-97.

\bibitem{P1986}
L. Pyber, A new generalization of the Erd\H{o}s-Ko-Rado theorem, \emph{J. Comb. Theory, Ser. A} \textbf{43} (1986) 85-90.

\bibitem{S1928}
E. Sperner, Ein satz über untermengen einer endlichen Menge, \emph{Math. Z.} \textbf{27} (1928) 544-548.

\bibitem{TT2025}
H. Tanaka and N. Tokushige, A semidefinite programming approach to cross-2-intersecting families, preprint (2025). arXiv:2503.14844.

\bibitem{T2010}
N. Tokushige, On cross \(t\)-intersecting families of sets, \emph{J. Combin. Theory Ser. A} \textbf{117} (2010) 1167-1177.

\bibitem{T2013}
N. Tokushige, The eigenvalue method for cross-\(t\)-intersecting families, \emph{J. Algebraic Comb.} \textbf{38} (2013) 653-662.

%\bibitem{WZ2013}
%J. Wang, H.J. Zhang, Nontrivial independent sets of bipartite graphs and cross-intersecting families, \emph{J. Comb. Theory, Ser. A} \textbf{120} (2013) 129-141.

\bibitem{W1984}
R.M. Wilson, The exact bound in the Erd\H{o}s-Ko-Rado theorem, \emph{Combinatorica} \textbf{4} (1984) 247-257.

\bibitem{ZW2025}
H. Zhang, B. Wu, On a conjecture of Tokushige for cross-\(t\)-intersecting families,  \emph{J. Comb. Theory, Ser. B} \textbf{171} (2025) 49-70.

\end{thebibliography}
\end{document}